\providecommand{\U}[1]{\protect\rule{.1in}{.1in}}
\newtheorem{theorem}{Theorem}
\newtheorem{conclusion}[theorem]{Conclusion}
\newtheorem{corollary}[theorem]{Corollary}
\newtheorem{definition}[theorem]{Definition}
\newtheorem{goal}[theorem]{Goal}
\newtheorem{lemma}[theorem]{Lemma}
\newtheorem{proposition}[theorem]{Proposition}
\begin{document}

\title{PDE methods in random matrix theory}
\author{Brian C. Hall}
\email{bhall@nd.edu}
\address{Department of Mathematics, University of Notre Dame, Notre Dame,
IN 46556, USA}
\thanks{Supported in part by a Simons Foundation Collaboration Grant for Mathematicians}

\maketitle

\begin{abstract}
This article begins with a brief review of random matrix theory, followed by a
discussion of how the large-$N$ limit of random matrix models can be realized
using operator algebras. I then explain the notion of \textquotedblleft Brown
measure,\textquotedblright\ which play the role of the eigenvalue distribution
for operators in an operator algebra.

I then show how methods of partial differential equations can be used to
compute Brown measures. I consider in detail the case of the circular law and
then discuss more briefly the case of the free multiplicative Brownian motion,
which was worked out recently by the author with Driver and Kemp.

\end{abstract}
\tableofcontents

\setcounter{tocdepth}{2}

\section{Random matrices}

Random matrix theory consists of choosing an $N\times N$ matrix at random and
looking at natural properties of that matrix, notably its eigenvalues.
Typically, interesting results are obtained only for \textit{large} random
matrices, that is, in the limit as $N$ tends to infinity. The subject began
with the work of Wigner \cite{Wigner}, who was studying energy levels in large
atomic nuclei. The subject took on new life with the discovery that the
eigenvalues of certain types of large random matrices resemble the energy
levels of quantum chaotic systems---that is, quantum mechanical systems for
which the underlying classical system is chaotic. (See, for example,
\cite{Gutz} or \cite{Stockmann}.) There is also a fascinating conjectural
agreement, due to Montgomery \cite{Mon}, between the statistical behavior of
zeros of the Riemann zeta function and the eigenvalues of random matrices. See
also \cite{KS} or \cite{BK}.

We will review briefly some standard results in the subject, which may be
found in textbooks such as those by Tao \cite{Tao} or Mehta \cite{Mehta}.

\subsection{The Gaussian unitary ensemble\label{GUE.sec}}

The first example of a random matrix is the \textbf{Gaussian unitary ensemble}
(GUE) introduced by Wigner \cite{Wigner}. Let $H_{N}$ denote the real vector
space of $N\times N$ Hermitian matrices, that is, those with $X^{\ast}=X,$
where $X^{\ast}$ is the conjugate transpose of $X.$ We then consider a
Gaussian measure on $H_{N}$ given by%
\begin{equation}
d_{N}e^{-N\mathrm{trace}(X^{2})/2}~dX,\quad X\in H_{N}, \label{GUEdensity}%
\end{equation}
where $dX$ denotes the Lebesgue measure on $H_{N}$ and where $d_{N}$ is a
normalizing constant. If $X^{N}$ is a random matrix having this measure as its
distribution, then the diagonal entries are normally distributed real random
variables with mean zero and variance $1/N.$ The off-diagonal entries are
normally distributed complex random variables, again with mean zero and
variance $1/N.$ Finally, the entries are as independent as possible given that
they are constrained to be Hermitian, meaning that the entries on and above
the diagonal are independent (and then the entries below the diagonal are
determined by those above the diagonal). The factor of $N$ in the exponent in
(\ref{GUEdensity}) is responsible for making the variance of the entries of
order $1/N.$ This scaling of the variances, in turn, guarantees that the
eigenvalues of the random matrix $X^{N}$ do not blow up as $N$ tends to infinity.

In order to state the first main result of random matrix theory, we introduce
the following notation.

\begin{definition}
\label{eed.def}For any $N\times N$ matrix $X,$ the \textbf{empirical
eigenvalue distribution} of $X$ is the probability measure on $\mathbb{C}$
given by%
\[
\frac{1}{N}\sum_{j=1}^{N}\lambda_{j},
\]
where $\{\lambda_{1},\ldots,\lambda_{N}\}$ are the eigenvalues of $X$, listed
with their algebraic multiplicity.
\end{definition}

We now state \textbf{Wigner's semicircle law}.%

\begin{figure}[ptb]%
\centering
\includegraphics[
height=1.6535in,
width=2.5278in
]%
{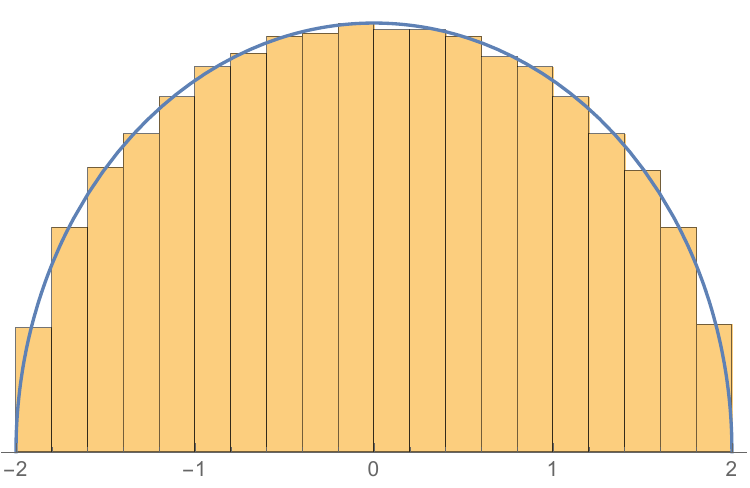}%
\caption{A histogram of the eigenvalues of a GUE random variable with
$N=2,000,$ plotted against a semicircular density}%
\label{guehist.fig}%
\end{figure}

\begin{theorem}
\label{wigner.thm}Let $X^{N}$ be a sequence of independently chosen $N\times
N$ random matrices, each chosen according to the probability distribution in
(\ref{GUEdensity}). Then as $N\rightarrow\infty,$ the empirical eigenvalue
distribution of $X^{N}$ converges almost surely in the weak topology to
Wigner's semicircle law, namely the measure supported on $[-2,2]$ and given
there by%
\begin{equation}
\frac{1}{2\pi}\sqrt{4-x^{2}}~dx,\quad-2\leq x\leq2. \label{semi}%
\end{equation}

\end{theorem}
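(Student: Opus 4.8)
The plan is to prove Wigner's semicircle law via the \emph{moment method}, which is the most elementary and robust approach and which generalizes well to other Wigner-type ensembles. The strategy has two halves: first show convergence of moments in expectation, then upgrade to almost sure convergence by a variance estimate and Borel--Cantelli.

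First I would recall that a probability measure supported on a compact set is determined by its moments, so it suffices to show that for each fixed integer $k \geq 0$,
\[
\mathbb{E}\left[ \frac{1}{N}\operatorname{trace}((X^N)^k) \right] \longrightarrow m_k := \int_{-2}^{2} x^k \, \frac{1}{2\pi}\sqrt{4-x^2}\,dx
\]
as $N \to \infty$. The right-hand side is well known: $m_k = 0$ for $k$ odd, and $m_{2n} = C_n$, the $n$-th Catalan number. For the left-hand side, I would expand
\[
\frac{1}{N}\operatorname{trace}((X^N)^k) = \frac{1}{N}\sum_{i_1,\ldots,i_k} X_{i_1 i_2}X_{i_2 i_3}\cdots X_{i_k i_1},
\]
take the expectation, and use the fact that the entries are jointly Gaussian with mean zero and $\mathbb{E}[|X_{ij}|^2] = 1/N$. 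By Wick's theorem (Isserlis' formula), $\mathbb{E}[X_{i_1 i_2}\cdots X_{i_k i_1}]$ is a sum over pair partitions of the $k$ indices-edges, where a pair can contribute only if the corresponding matrix entries are complex-conjugates of one another (i.e.\ the edges are traversed in opposite directions). Each surviving pairing contributes a factor of order $N^{-k/2}$, and then one counts the number of free index summations. The combinatorial heart of the argument is the standard observation that a closed walk of length $k$ on the index set whose edges are perfectly paired corresponds to a gluing of a $k$-gon; the number of index choices is maximized exactly when the pairing is \emph{non-crossing}, in which case the glued surface is a sphere and one gets the top power $N^{1 + k/2}$, cancelling the $1/N$ and the $N^{-k/2}$ to leave $O(1)$. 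Non-crossing pair partitions of $2n$ points are counted by $C_n$, and odd $k$ admits no pairing at all, giving exactly $m_k$ in the limit. I expect this combinatorial bookkeeping — carefully showing crossing pairings are lower order and that every non-crossing pairing contributes $1$ — to be the main obstacle, though it is by now routine.

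For the almost sure upgrade, I would estimate the variance of $\frac{1}{N}\operatorname{trace}((X^N)^k)$ and show it is $O(1/N^2)$. This again comes from Wick's theorem: $\mathbb{E}[(\tfrac{1}{N}\operatorname{trace}(X^k))^2] - (\mathbb{E}[\tfrac{1}{N}\operatorname{trace}(X^k)])^2$ is a sum over pairings of the $2k$ edges of two $k$-gons that do \emph{not} split into a pairing of each polygon separately; such ``connected'' pairings lose at least two powers of $N$ relative to the leading term, giving the $N^{-2}$ bound. Then $\sum_N \operatorname{Var}(\tfrac{1}{N}\operatorname{trace}((X^N)^k)) < \infty$, so by Chebyshev and Borel--Cantelli, $\tfrac{1}{N}\operatorname{trace}((X^N)^k) \to m_k$ almost surely for each fixed $k$. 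Taking a countable intersection over all $k$, almost surely all moments converge; since the limiting measure is compactly supported and hence determined by its moments, and since one checks the empirical measures are almost surely eventually supported in a fixed compact set (e.g.\ via a crude operator-norm bound, or by noting moment convergence of all even moments controls tightness), this yields almost sure weak convergence to the semicircle law. I would close by remarking that the Gaussian structure is not essential — only the first two moments and a mild tail condition on the entries matter — but that for the GUE the Wick calculus makes every step transparent.
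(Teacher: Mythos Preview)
Your moment-method outline is correct and is the standard textbook proof of Wigner's law for the GUE: the Wick/genus expansion identifies the leading contributions with non-crossing pair partitions (hence Catalan numbers), the $O(N^{-2})$ variance bound comes from the connectedness constraint on pairings across two polygons, and Borel--Cantelli plus the moment-determinacy of the compactly supported semicircle gives the almost sure weak convergence. The only step I would flag as needing care is the passage from ``all moments converge almost surely'' to ``weak convergence almost surely'': you do not need a separate operator-norm bound, since convergence of the even moments to those of a compactly supported law already gives tightness, but you should state this explicitly rather than leave it as a parenthetical.

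However, there is nothing to compare your argument against: the paper does not prove Theorem~\ref{wigner.thm}. It is stated purely as background, with the reader referred to the textbooks of Tao and Mehta for a proof. So your write-up stands on its own as a valid proof, and indeed the moment method you describe is essentially the argument one finds in those references.
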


Figure \ref{guehist.fig} shows a simulation of the Gaussian unitary ensemble
for $N=2,000,$ plotted against the semicircular density in (\ref{semi}). One
notable aspect of Theorem \ref{wigner.thm} is that the limiting eigenvalue
distribution (i.e., the semicircular measure in (\ref{semi})) is
\textit{nonrandom}. That is to say, we are choosing a matrix at random, so
that its eigenvalues are random, but in the large-$N$ limit, the randomness in
the bulk eigenvalue distribution disappears---it is always semicircular. Thus,
if we were to select another GUE matrix with $N=2,000$ and plot its
eigenvalues, the histogram would (with high probability) look very much like
the one in Figure \ref{guehist.fig}.

It is important to note, however, that if one zooms in with a magnifying glass
so that one can see the individual eigenvalues of a large GUE matrix, the
randomness in the eigenvalues will persist. The behavior of these individual
eigenvalues is of considerable interest, because they are supposed to resemble
the energy levels of a \textquotedblleft quantum chaotic
system\textquotedblright\ (that is, a quantum mechanical system whose
classical counterpart is chaotic). Nevertheless, in this article, I will deal
only with the bulk properties of the eigenvalues.

\subsection{The Ginibre ensemble\label{ginibre.sec}}%

\begin{figure}[ptb]%
\centering
\includegraphics[
height=2.5278in,
width=2.5278in
]%
{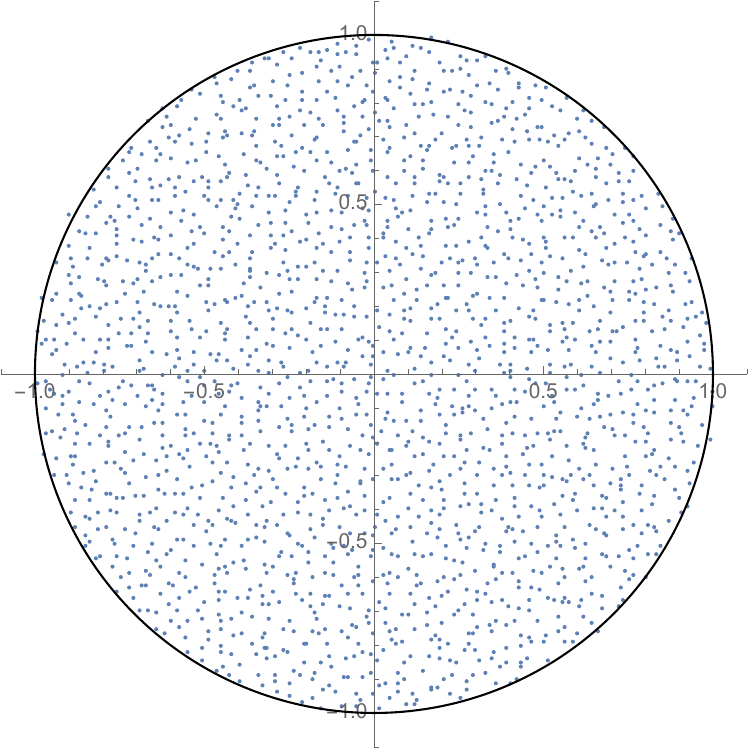}%
\caption{A plot of the eigenvalues of a Ginibre matrix with $N=2,000$}%
\label{ginibreplot.fig}%
\end{figure}

We now discuss the non-Hermitian counterpart to the Gaussian unitary ensemble,
known as the \textbf{Ginibre ensemble} \cite{Gin}.We let $M_{N}(\mathbb{C})$
denote the space of \textit{all} $N\times N$ matrices, not necessarily
Hermitian. We then make a measure on $M_{N}(\mathbb{C})$ using a formula
similar to the Hermitian case:%
\begin{equation}
f_{N}~e^{-N\mathrm{trace}(Z^{\ast}Z)}~dZ,\quad Z\in M_{N}(\mathbb{C}),
\label{GinibreDensity}%
\end{equation}
where $dZ$ denotes the Lebesgue measure on $H_{N}$ and where $f_{N}$ is a
normalizing constant. In this case, \textit{all} the entries of $Z$ are
independent of one another. Each entry is a complex-valued normal random
variable with mean zero and variance $1/N.$

The eigenvalues for the Ginibre ensemble need not be real and they follow the
\textbf{circular law}.

\begin{theorem}
\label{circular.thm}Let $Z^{N}$ be a sequence of independently chosen $N\times
N$ random matrices, each chosen according to the probability distribution in
(\ref{GinibreDensity}). Then as $N\rightarrow\infty,$ the empirical eigenvalue
distribution of $Z^{N}$ converges almost surely in the weak topology to the
uniform measure on the unit disk.
\end{theorem}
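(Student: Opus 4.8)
The key structural difference from Wigner's theorem is that $Z^N$ is non-Hermitian, so the eigenvalues are genuinely complex and the moment method used for symmetric matrices fails outright: the traces $\frac{1}{N}\mathrm{trace}((Z^N)^k)$ do not control the empirical eigenvalue distribution, because a nilpotent matrix has all moments zero but need not be the zero matrix. The standard route, going back to Girko and made rigorous by Bai (and later Tao–Vu in full generality), is the \emph{Hermitization} trick. For a fixed $z \in \mathbb{C}$, one forms the $2N \times 2N$ Hermitian matrix
\[
H_N(z) = \begin{pmatrix} 0 & Z^N - z \\ (Z^N - z)^* & 0 \end{pmatrix},
\]
whose eigenvalues are $\pm \sigma_j$, where $\sigma_j$ are the singular values of $Z^N - z$. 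Equivalently, one studies the symmetrized empirical measure $\nu_{N,z}$ of the singular values of $Z^N - z$. The logarithmic potential of the empirical eigenvalue distribution $\mu_{Z^N}$ is then recovered by the formula
\[
U_{\mu_{Z^N}}(z) = -\frac{1}{N} \log\left|\det(Z^N - z)\right| = -\int_0^\infty \log s \, d\nu_{N,z}(s),
\]
so that $\mu_{Z^N} = \frac{1}{2\pi}\Delta U_{\mu_{Z^N}}$ in the distributional sense. The plan is therefore: (1) show that for each fixed $z$, the symmetrized singular value distribution $\nu_{N,z}$ converges almost surely to a deterministic measure $\nu_z$; (2) identify $\nu_z$ and compute $\int \log s \, d\nu_z(s)$ explicitly; (3) upgrade the pointwise-in-$z$ convergence of logarithmic potentials to convergence of the measures $\mu_{Z^N}$, which yields the uniform measure on the unit disk after taking the Laplacian.

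For step (1), I would apply standard Hermitian random matrix machinery to $H_N(z)$: since $Z^N$ has i.i.d. Gaussian entries of variance $1/N$, the matrix $(Z^N-z)^*(Z^N-z)$ is a shifted Wishart-type matrix, and its Stieltjes transform satisfies a self-consistent (fixed-point) equation in the large-$N$ limit, provable via resolvent/Schur-complement arguments together with concentration of the entries. Almost sure convergence (rather than just convergence in probability) comes from Borel–Cantelli once one has a polynomial rate, using the independence of the $Z^N$ across $N$. In the Gaussian case there is also the luxury of an exact formula: the singular values of $Z^N - z$ form a determinantal point process, so one could instead analyze the correlation kernel directly. Either way, one finds that $\nu_z$ is the measure whose Stieltjes transform solves the relevant quadratic, and a direct computation gives $\int_0^\infty \log s \, d\nu_z(s) = \log|z|$ for $|z| \ge 1$ and $= \frac{1}{2}(|z|^2 - 1)$ for $|z| < 1$; hence $U(z) = -\log|z|$ outside the disk and $-\frac{1}{2}(|z|^2-1)$ inside, and $\frac{1}{2\pi}\Delta U$ is exactly the normalized area measure on the unit disk.

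The genuine obstacle is step (3): the map from a probability measure to its logarithmic potential is not continuous in a way that lets one pass from pointwise convergence of potentials to weak convergence of measures without extra input. The function $\log s$ is unbounded both at $s=0$ and at $s=\infty$, so one must control (a) the smallest singular value of $Z^N - z$ — i.e., rule out that $Z^N - z$ is too close to singular, which for Gaussian entries follows from classical least-singular-value bounds (Sankar–Spielman–Teng / Edelman), giving $\sigma_{\min}(Z^N - z) \gtrsim N^{-C}$ with overwhelming probability — and (b) the largest singular value, which is $O(1)$ with overwhelming probability by a crude operator-norm bound. These two estimates establish uniform integrability of $\log s$ against $\nu_{N,z}$, which is exactly what licenses the interchange of limits. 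One then invokes the lower-semicontinuity/continuity properties of logarithmic potentials (as in Bai's argument or the Tao–Vu replacement principle) to conclude that $U_{\mu_{Z^N}} \to U$ almost surely for a.e. $z$, hence $\mu_{Z^N} \to \frac{1}{2\pi}\Delta U$ weakly almost surely. I expect the least-singular-value control to be the most delicate ingredient; everything else is either a routine Hermitian computation or a soft potential-theoretic argument.
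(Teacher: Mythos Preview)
Your proposal is a correct and well-structured outline of the classical Girko--Bai Hermitization proof of the circular law. However, the paper does \emph{not} actually prove Theorem~\ref{circular.thm}: it is stated in Section~\ref{ginibre.sec} as a known background result, with no proof given in the paper itself.

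What the paper does do is develop a PDE method (Sections~4--6) to compute the \emph{Brown measure} of the limiting circular element $c_t$, obtaining the uniform measure on the disk of radius $\sqrt{t}$ (Proposition~\ref{BrownCircular.prop}). In the three-step framework of Section~3.4, this is ``Step~2'' only. The paper is explicit that ``Step~3'' --- showing that the empirical eigenvalue distribution of $Z^N$ actually converges to this Brown measure --- is a separate technical matter, and it cites precisely Girko~\cite{Girko} and Bai~\cite{Bai} for the Hermitization machinery needed. That is exactly the argument you have sketched: your steps (1)--(3), including the least-singular-value control, are the content of what the paper calls Step~3 and outsources to the literature. So your approach is not merely different from the paper's --- it is the complement of it. The paper supplies an alternative derivation of the limiting logarithmic potential (your step~(2)) via Hamilton--Jacobi analysis, and indeed your formula $U(z)=-\tfrac{1}{2}(|z|^2-1)$ for $|z|<1$ matches the paper's $s_1(\lambda)=|\lambda|^2-1$ up to the sign and factor-of-two convention; but the paper makes no attempt at the uniform-integrability and smallest-singular-value estimates that constitute the real work in your proposal.
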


Figure \ref{ginibreplot.fig} shows the eigenvalues of a random matrix chosen
from the Ginibre ensemble with $N=2,000.$ As in the GUE case, the bulk
eigenvalue distribution becomes deterministic in the large-$N$ limit. As in
the GUE case, one can also zoom in with a magnifying glass on the eigenvalues
of a Ginibre matrix until the individual eigenvalues become visible, and the
local behavior of these eigenvalues is an interesting problem---which will not
be discussed in this article.

\subsection{The Ginibre Brownian motion\label{ginBrownian.sec}}

In this article, I will discuss a certain approach to analyzing the behavior
of the eigenvalues in the Ginibre ensemble. The main purpose of this analysis
is not so much to obtain the circular law, which can be proved by various
other methods. The main purpose is rather to develop tools that can be used to
study a more complex random matrix model in the group of \textit{invertible}
$N\times N$ matrices. The Ginibre case then represents a useful prototype for
this more complicated problem.

It is then useful to introduce a time-parameter into the description of the
Ginibre ensemble, which we can do by studying the \textbf{Ginibre Brownian
motion}. Specifically, in any finite-dimensional real inner product space $V$,
there is a natural notion of Brownian motion. The Ginibre Brownian motion is
obtained by taking $V$ to be $M_{N}(\mathbb{C})$, viewed as a real vector
space of dimension $2N^{2},$ and using the (real) inner product $\left\langle
\cdot,\cdot\right\rangle _{N}$ given by%
\[
\left\langle X,Y\right\rangle _{N}:=N\operatorname{Re}(\mathrm{trace}(X^{\ast
}Y)).
\]
We let $C_{t}^{N}$ denote this Brownian motion, assumed to start at the origin.

At any one fixed time, the distribution of $C_{t}^{N}$ is just the same as
$\sqrt{t}Z^{N},$ where $Z^{N}$ is distributed as the Ginibre ensemble. The
\textit{joint} distribution of the process $C_{t}^{N}$ for various values of
$t$ is determined by the following property: For any collection of times
$0=t_{0}<t_{1}<t_{2}<\cdots<t_{k},$ the \textquotedblleft
increments\textquotedblright%
\begin{equation}
C_{t_{1}}^{N}-C_{t_{0}}^{N},C_{t_{2}}^{N}-C_{t_{1}}^{N},\ldots,C_{t_{k}}%
^{N}-C_{t_{k-1}}^{N} \label{ginIncrements}%
\end{equation}
are independent and distributed as $\sqrt{t_{j}-t_{j-1}}Z^{N}.$

\section{Large-$N$ limits in random matrix theory}

Results in random matrix theory are typically expressed by first computing
some quantity (e.g., the empirical eigenvalue distribution) associated to an
$N\times N$ random matrix and then letting $N$ tend to infinity. It is
nevertheless interesting to ask whether there is some sort of limiting object
that captures the large-$N$ limit of the entire random matrix model. In this
section, we discuss one common approach constructing such a limiting object.

\subsection{Limit in $\ast$-distribution\label{StarLim.sec}}

Suppose we have a matrix-valued random variable $X,$ not necessarily normal.
Then we can then speak about the $\ast$-moments of $X,$ which are expressions
like
\[
\mathbb{E}\left\{  \frac{1}{N}\mathrm{trace}(X^{2}(X^{\ast})^{3}X^{4}X^{\ast
})\right\}  .
\]
Generally, suppose $p(a,b)$ is a polynomial in two noncommuting variables,
that is, a linear combination of words involving products of $a$'s and $b$'s
in all possible orders. We may then consider%
\[
\mathbb{E}\left\{  \frac{1}{N}\mathrm{trace}[p(X,X^{\ast})]\right\}  .
\]
If, as usual, we have a \textit{family} $X^{N}$ of $N\times N$ random
matrices, we may consider the limits of such $\ast$-moments (if the limits
exist):%
\begin{equation}
\lim_{N\rightarrow\infty}\mathbb{E}\left\{  \frac{1}{N}\mathrm{trace}%
[p(X^{N},(X^{N})^{\ast})]\right\}  . \label{limitStar}%
\end{equation}

\subsection{Tracial von Neumann algebras\label{opAlg.sec}}

Our goal is now to find some sort of limiting object that can encode
\textit{all} of the limits in (\ref{limitStar}). Specifically, we will try to
find the following objects: (1) an operator algebra $\mathcal{A}$, (2) a
\textquotedblleft trace\textquotedblright\ $\tau:\mathcal{A}\rightarrow
\mathbb{C},$ and (3) and element $x$ of $\mathcal{A},$ such that for each
polynomial $p$ in two noncommuting variables, we have%
\begin{equation}
\lim_{N\rightarrow\infty}\mathbb{E}\left\{  \frac{1}{N}\mathrm{trace}%
[p(X^{N},(X^{N})^{\ast})]\right\}  =\tau\lbrack p(x,x^{\ast})].
\label{largeNlimit1}%
\end{equation}

We now explain in more detail what these objects should be. First, we
generally take $\mathcal{A}$ to be a von Neumann algebra, that is, an algebra
of operators that contains the identity, is closed under taking adjoints, and
is closed under taking weak operator limits. Second, the \textquotedblleft
trace\textquotedblright\ $\tau$ is not actually computed by taking the trace
of elements of $\mathcal{A},$ which are typically not of trace class. Rather,
$\tau$ is a linear functional that has properties similar to the properties of
the \textit{normalized} trace $\frac{1}{N}\mathrm{trace}(\cdot)$ for matrices.
Specifically, we require the following properties:

\begin{itemize}
\item $\tau(1)=1,$ where on the left-hand side, $1$ denotes the identity operator,

\item $\tau(a^{\ast}a)\geq0$ with equality only if $a=0,$ and

\item $\tau(ab)=\tau(ba),$ and

\item $\tau$ should be continuous with respect to the weak-$\ast$ topology on
$\mathcal{A}.$
\end{itemize}

\noindent Last, $x$ is a single element of $\mathcal{A}.$

We will refer to the pair $(\mathcal{A},\tau)$ as a \textbf{tracial von
Neumann algebra}. We will not discuss here the methods used for actually
constructing interesting examples of tracial von Neumann algebras. Instead, we
will simply accept as a known result that certain random matrix models admit
large-$N$ limits as operators in a tracial von Neumann algebra. (The
interested reader may consult the work of Biane and Speicher \cite{BS1}, who
use a Fock space construction to find tracial von Neumann algebras of the sort
we will be using in this article.)

Let me emphasize that although $X^{N}$ is a matrix-valued random variable, $x$
is \textit{not} an operator-valued random variable. Rather, $x$ is a
\textit{single} operator in the operator algebra $\mathcal{A}.$ This situation
reflects a typical property of random matrix models, which we have already
seen an example of in Sections \ref{GUE.sec} and \ref{ginibre.sec}, that
certain random quantities become nonrandom in the large-$N$ limit. In the
present context, it is often the case that we have a stronger statement
than\ (\ref{largeNlimit1}), as follows: If we sample the $X^{N}$'s
independently for different $N$'s, then with probability one, we will have%
\[
\lim_{N\rightarrow\infty}\frac{1}{N}\mathrm{trace}[p(X^{N},(X^{N})^{\ast
})]=\tau\lbrack p(x,x^{\ast})].
\]
That is to say, in many cases, the \textit{random} quantity $\frac{1}%
{N}\mathrm{trace}[p(X^{N},(X^{N})^{\ast})]$ converges almost surely to the
\textit{single, deterministic} number $\tau\lbrack p(x,x^{\ast})]$ as $N$
tends to infinity.

\subsection{Free independence\label{free.sec}}

In random matrix theory, it is often convenient to construct random matrices
as sums or products of other random matrices, which are frequently assumed to
be independent of one another. The appropriate notion of independence in the
large-$N$ limit---that is, in a tracial von Neumann algebra---is the notion of
\textquotedblleft freeness\textquotedblright\ or \textquotedblleft free
independence.\textquotedblright\ This concept was introduced by Voiculescu
\cite{Voi1,Voi2} and has become a powerful tool in random matrix theory. (See
also the monographs \cite{NS} by Nica and Speicher and \cite{MS} by Mingo and
Speicher.) Given an element $a$ in a tracial von Neumann algebra
$\mathcal{(A},\tau)$ and a polynomial $p,$ we may form the element $p(a).$ We
also let $\dot{p}(a)$ denote the corresponding \textquotedblleft
centered\textquotedblright\ element, given by%
\[
\dot{p}(a)=p(a)-\tau(p(a))
\]

We then say that elements $a_{1},\ldots,a_{k}$ are \textbf{freely independent}
(or, more concisely, \textbf{free}) if the following condition holds. Let
$j_{1},\ldots,j_{n}$ be any sequence of indices taken from $\{1,\ldots,k\}$,
with the property that $j_{l}$ is distinct from $j_{l+1}.$ Let $p_{j_{1}%
},\ldots,p_{j_{n}}$ be any sequence $p_{j_{1}},\ldots,p_{j_{n}}$ of
polynomials. Then we should have%
\[
\tau(\dot{p}_{j_{1}}(a_{j_{1}})\dot{p}_{j_{2}}(a_{j_{2}})\cdots\dot{p}_{j_{n}%
}(a_{j_{n}}))=0.
\]
Thus, for example, if $a$ and $b$ are freely independent, then%
\[
\tau\lbrack(a^{2}-\tau(a^{2}))(b^{2}-\tau(b^{2}))(a-\tau(a))]=0.
\]

The concept of freeness allows us, in principle, to disentangle traces of
arbitrary words in freely independent elements, thereby reducing the
computation to the traces of powers of individual elements. As an example, let
us do a few computations with two freely independent elements $a$ and $b$. We
form the corresponding centered elements $a-\tau(a)$ and $b-\tau(b)$ and start
applying the definition:%
\begin{align*}
0  &  =\tau\lbrack(a-\tau(a))(b-\tau(b))]\\
&  =\tau\lbrack ab]-\tau\lbrack\tau(a)b]-\tau\lbrack a\tau(b)]+\tau\lbrack
\tau(a)\tau(b)]\\
&  =\tau\lbrack ab]-\tau(a)\tau(b)-\tau(a)\tau(b)+\tau(a)\tau(b)\\
&  =\tau\lbrack ab]-\tau(a)\tau(b),
\end{align*}
where we have used that scalars can be pulled outside the trace and that
$\tau(1)=1.$ We conclude, then, that%
\[
\tau(ab)=\tau(a)\tau(b).
\]
A similar computation shows that $\tau(a^{2}b)=\tau(a^{2})\tau(b)$ and that
$\tau(ab^{2})=\tau(a)\tau(b^{2}).$

The first really interesting case comes when we compute $\tau(abab).$ We start
with
\[
0=\tau\lbrack(a-\tau(a))(b-\tau(b))(a-\tau(a))(b-\tau(b))]
\]
and expand out the right-hand side as $\tau(abab)$ plus a sum of fifteen
terms, all of which reduce to previously computed quantities. Sparing the
reader the details of this computation, we find that%
\[
\tau(abab)=\tau(a^{2})\tau(b)^{2}+\tau(a)^{2}\tau(b^{2})-\tau(a)^{2}%
\tau(b)^{2}.
\]

Although the notion of free independence will not explicitly be used in the
rest of this article, it is certainly a key concept that is always lurking in
the background.

\subsection{The circular Brownian motion\label{circBrownian.sec}}

If $Z^{N}$ is a Ginibre random matrix (Section \ref{ginibre.sec}), then the
$\ast$-moments of $Z^{N}$ converge to those of a \textquotedblleft circular
element\textquotedblright\ $c$ in a certain tracial von Neumann algebra
$(\mathcal{A},\tau).$ The $\ast$-moments of $c$ can be computed in an
efficient combinatorial way (e.g., Example 11.23 in \cite{NS}). We have, for
example, $\tau(c^{\ast}c)=1$ and $\tau(c^{k})=0$ for all positive integers
$k.$

More generally, we can realize the large-$N$ limit of the entire Ginibre
Brownian motion $C_{t}^{N},$ for all $t>0,$ as a family of elements $c_{t}$ in
a tracial von Neumann algebra $(\mathcal{A},\tau).$ In the limit, the ordinary
independence conditions for the increments of $C_{t}^{N}$ (Section
\ref{ginBrownian.sec}) is replaced by the free independence of the increments
of $c_{t}.$ That is, for all $0=t_{0}<t_{1}<\cdots<t_{k},$ the elements%
\[
c_{t_{1}}-c_{t_{0}},c_{t_{2}}-c_{t_{1}},\ldots,c_{t_{k}}-c_{t_{k-1}}%
\]
are freely independent, in the sense described in the previous subsection. For
any $t>0,$ the $\ast$-distribution of $c_{t}$ is the same as the $\ast
$-distribution of $\sqrt{t}c_{1}.$

\section{Brown measure}

\subsection{The goal}

Recall that if $A$ is an $N\times N$ matrix with eigenvalues $\lambda
_{1},\ldots,\lambda_{N},$ the empirical eigenvalue distribution $\mu_{A}$ of
$A$ is the probability measure on $\mathbb{C}$ assigning mass $1/N$ to each
eigenvalue:%
\[
\mu_{A}=\frac{1}{N}\sum_{j=1}^{N}\delta_{\lambda_{j}}.
\]

\begin{goal}
Given an arbitrary element $x$ in a tracial von Neumann algebra $(\mathcal{A}%
,\tau),$ construct a probability measure $\mu_{x}$ on $\mathbb{C}$ analogous
to the empirical eigenvalue distribution of a matrix.
\end{goal}

If $x\in\mathcal{A}$ is normal, then there is a standard way to construct such
a measure. The spectral theorem allows us to construct a projection-valued
measure $\gamma_{x}$ \cite[Section 10.3]{HallQM} associated to $x$. For each
Borel set $E,$ the projection $\gamma_{x}(E)$ will, again, belong to the von
Neumann algebra $\mathcal{A},$ and we may therefore define%
\begin{equation}
\mu_{x}(E)=\tau\lbrack\gamma_{x}(E)].\label{spectral}%
\end{equation}
We refer to $\mu_{x}$ as the \textbf{distribution} of $x$ (relative to the
trace $\tau$). If $x$ is not normal, we need a different construction---but
one that we hope will agree with the above construction in the normal case.

\subsection{A motivating computation}

If $A$ is an $N\times N$ matrix, define a function $s:\mathbb{C}%
\rightarrow\mathbb{R\cup\{-\infty\}}$ by
\[
s(\lambda)=\log(\left\vert \det(A-\lambda)\right\vert ^{2/N}),
\]
where the logarithm takes the value $-\infty$ when $\det(A-\lambda)=0.$ Note
that $s$ is computed from the characteristic polynomial $\det(A-\lambda)$ of
$A.$ We can compute $s$ in terms of its eigenvalues $\lambda_{1}%
,\ldots,\lambda_{N}$ (taken with their algebraic multiplicity) as%
\begin{equation}
s(\lambda)=\frac{2}{N}\sum_{j=1}^{N}\log\left\vert \lambda-\lambda
_{j}\right\vert . \label{sMatrix}%
\end{equation}
See Figure \ref{splot.fig} for a plot of (the negative of) $s(\lambda).$%

\begin{figure}[ptb]%
\centering
\includegraphics[
height=2.4111in,
width=3.3131in
]%
{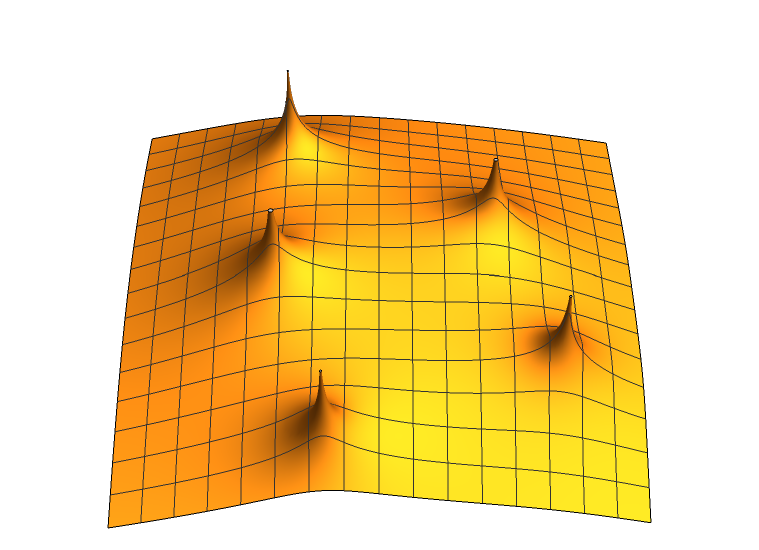}%
\caption{A plot of the function $-s(\lambda)$ for a matrix with five
eigenvalues. The function is harmonic except at the singularities}%
\label{splot.fig}%
\end{figure}

We then recall that the function $\log\left\vert \lambda\right\vert $ is a
multiple of the Green's function for the Laplacian on the plane, meaning that
the function is harmonic away from the origin and that%
\[
\Delta\log\left\vert \lambda\right\vert =2\pi\delta_{0}(\lambda),
\]
where $\delta_{0}$ is a $\delta$-measure at the origin. Thus, if we take the
Laplacian of $s(\lambda),$ with an appropriate normalizing factor, we get the
following nice result.

\begin{proposition}
The Laplacian, in the distribution sense, of the function $s(\lambda)$ in
(\ref{sMatrix}) satisfies%
\[
\frac{1}{4\pi}\Delta s(\lambda)=\frac{1}{N}\sum_{j=1}^{N}\delta_{\lambda_{j}%
}(\lambda),
\]
where $\delta_{\lambda_{j}}$ is a $\delta$-measure at $\lambda_{j}.$ That is
to say, $\frac{1}{4\pi}\Delta s$ is the empirical eigenvalue distribution\ of
$A$ (Definition \ref{eed.def}).
\end{proposition}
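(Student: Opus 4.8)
The plan is to reduce everything to the single fundamental-solution identity for $\log\lvert\lambda\rvert$ on the plane and then exploit the linearity and translation invariance of $\Delta$. First I would check that the right-hand side of (\ref{sMatrix}) actually defines a distribution: near each $\lambda_{j}$ the summand $\log\lvert\lambda-\lambda_{j}\rvert$ is comparable to $\log r$, which is locally integrable in two real dimensions, and away from the finitely many points $\lambda_{1},\dots,\lambda_{N}$ the function $s$ is smooth (indeed harmonic). Hence $s\in L^{1}_{\mathrm{loc}}(\mathbb{C})$, so $\Delta s$ makes sense in $\mathcal{D}'(\mathbb{C})$, and since $\Delta$ is linear and sequentially continuous on distributions we may differentiate term by term: $\Delta s=\tfrac{2}{N}\sum_{j=1}^{N}\Delta\log\lvert\lambda-\lambda_{j}\rvert$.

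Next I would establish the key identity $\Delta\log\lvert\lambda\rvert=2\pi\,\delta_{0}$ in $\mathcal{D}'(\mathbb{C})$ — equivalently, that $\tfrac{1}{2\pi}\log\lvert\lambda\rvert$ is a fundamental solution of $\Delta$ on $\mathbb{R}^{2}$, as indicated by the Green's-function remark preceding the proposition. This is the one genuine computation in the argument, and I expect it to be the main (really, only) obstacle. I would pair against an arbitrary test function $\varphi\in C_{c}^{\infty}(\mathbb{C})$, write $\langle\Delta\log\lvert\cdot\rvert,\varphi\rangle=\int_{\mathbb{C}}\log\lvert\lambda\rvert\,\Delta\varphi\,dx\,dy=\lim_{\varepsilon\to0^{+}}\int_{\lvert\lambda\rvert\geq\varepsilon}\log\lvert\lambda\rvert\,\Delta\varphi\,dx\,dy$ (the limit justified by dominated convergence, using local integrability of $\log\lvert\lambda\rvert$), and then apply Green's second identity on the annulus $\{\varepsilon\leq\lvert\lambda\rvert\leq R\}$, choosing $R$ so large that $\varphi$ vanishes near $\lvert\lambda\rvert=R$. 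Since $\log\lvert\lambda\rvert$ is harmonic on the annulus, the area integral collapses to boundary terms on the circle $\lvert\lambda\rvert=\varepsilon$: one term is of size $O(\varepsilon\lvert\log\varepsilon\rvert)$ and vanishes, while the other, involving the radial derivative $\partial_{r}\log\lvert\lambda\rvert=1/\varepsilon$ on that circle, converges to $2\pi\varphi(0)$. Hence $\langle\Delta\log\lvert\cdot\rvert,\varphi\rangle=2\pi\varphi(0)$, i.e. $\Delta\log\lvert\lambda\rvert=2\pi\,\delta_{0}$. The one place to be careful here is the orientation of the inward-pointing normal on the inner circle, since a sign error there flips the constant.

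Finally I would assemble the pieces. Translation invariance of $\Delta$ gives $\Delta\log\lvert\lambda-\lambda_{j}\rvert=2\pi\,\delta_{\lambda_{j}}$ for each $j$, so $\Delta s=\tfrac{2}{N}\sum_{j=1}^{N}2\pi\,\delta_{\lambda_{j}}=\tfrac{4\pi}{N}\sum_{j=1}^{N}\delta_{\lambda_{j}}$; dividing by $4\pi$ yields $\tfrac{1}{4\pi}\Delta s=\tfrac{1}{N}\sum_{j=1}^{N}\delta_{\lambda_{j}}$, as claimed. That this measure is precisely the empirical eigenvalue distribution of $A$ is then immediate from Definition \ref{eed.def}, the point being that the eigenvalues $\lambda_{1},\dots,\lambda_{N}$ were listed with algebraic multiplicity and that the factorization $\det(A-\lambda)=\prod_{j=1}^{N}(\lambda_{j}-\lambda)$ encodes exactly those multiplicities — which is what validates the representation (\ref{sMatrix}) of $s$ in the first place.
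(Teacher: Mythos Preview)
Your proposal is correct and follows exactly the approach the paper indicates: the paper does not give a separate proof of this proposition but simply recalls, in the paragraph preceding it, that $\log\lvert\lambda\rvert$ is the Green's function for the Laplacian on the plane and then asserts the result by linearity. Your write-up is a more detailed and self-contained version of this same argument, supplying the local-integrability check and the Green's-identity computation that the paper takes for granted (and, incidentally, you have the normalization $\Delta\log\lvert\lambda\rvert=2\pi\,\delta_{0}$ right; the paper's displayed formula with $\tfrac{1}{2\pi}$ is a typo, as one sees by checking consistency with the proposition itself).
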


Recall that if $B$ is a strictly positive self-adjoint matrix, then we can
take the logarithm of $B,$ which is the self-adjoint matrix obtained by
keeping the eigenvectors of $B$ fixed and taking the logarithm of the eigenvalues.

\begin{proposition}
The function $s$ in (\ref{sMatrix}) can also be computed as%
\begin{equation}
s(\lambda)=\frac{1}{N}\mathrm{trace}[\log((A-\lambda)^{\ast}(A-\lambda))]
\label{sFinite1}%
\end{equation}
or as
\begin{equation}
s(\lambda)=\lim_{\varepsilon\rightarrow0^{+}}\frac{1}{N}\mathrm{trace}%
[\log((A-\lambda)^{\ast}(A-\lambda)+\varepsilon)]. \label{sFinite2}%
\end{equation}
Here the logarithm is the self-adjoint logarithm of a positive self-adjoint matrix.
\end{proposition}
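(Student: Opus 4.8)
The plan is to prove the two formulas by diagonalizing in a suitable way and then comparing with the eigenvalue expression \eqref{sMatrix}. For \eqref{sFinite1}: fix $\lambda$ and set $B_\lambda=(A-\lambda)^{\ast}(A-\lambda)$, which is positive self-adjoint, hence its self-adjoint logarithm $\log B_\lambda$ is defined (with the convention that $\log 0=-\infty$, handled momentarily). Since the normalized trace of any function of a self-adjoint matrix is $\frac{1}{N}$ times the sum of that function applied to the eigenvalues, $\frac{1}{N}\mathrm{trace}[\log B_\lambda]=\frac{1}{N}\sum_k\log\sigma_k^{2}$, where $\sigma_1,\ldots,\sigma_N$ are the singular values of $A-\lambda$. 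Now $\prod_k\sigma_k^{2}=\det B_\lambda=\det((A-\lambda)^{\ast})\det(A-\lambda)=\overline{\det(A-\lambda)}\,\det(A-\lambda)=|\det(A-\lambda)|^{2}$, so $\frac{1}{N}\sum_k\log\sigma_k^{2}=\frac{1}{N}\log|\det(A-\lambda)|^{2}=\log(|\det(A-\lambda)|^{2/N})$, which is exactly $s(\lambda)$ in its original definition, and therefore equals \eqref{sMatrix}. The only subtlety is the singular case $\det(A-\lambda)=0$: then some $\sigma_k=0$, the left side has a $\log 0=-\infty$ summand, and the right side is $-\infty$ as well, so the identity persists in $\mathbb{R}\cup\{-\infty\}$.

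For \eqref{sFinite2}: write $B_\lambda+\varepsilon$, whose eigenvalues are $\sigma_k^{2}+\varepsilon>0$ for every $\varepsilon>0$, so the self-adjoint logarithm is unambiguously finite and $\frac{1}{N}\mathrm{trace}[\log(B_\lambda+\varepsilon)]=\frac{1}{N}\sum_k\log(\sigma_k^{2}+\varepsilon)$. As $\varepsilon\to 0^{+}$, each term $\log(\sigma_k^{2}+\varepsilon)$ decreases monotonically to $\log\sigma_k^{2}$ (interpreting this as $-\infty$ when $\sigma_k=0$), so by monotone convergence of the finite sum the limit is $\frac{1}{N}\sum_k\log\sigma_k^{2}=s(\lambda)$, again with the value $-\infty$ admitted. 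This also makes clear why the $\varepsilon$-regularized expression is useful: it is finite for all $\lambda$, whereas \eqref{sFinite1} is $-\infty$ at the eigenvalues.

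I do not expect a serious obstacle here; the argument is essentially the multiplicativity of the determinant together with the spectral calculus for positive self-adjoint matrices. The one point that requires a little care — and is the closest thing to a ``hard part'' — is bookkeeping the $-\infty$ value consistently: one must check that $\det B_\lambda=|\det(A-\lambda)|^{2}$ still forces the trace-of-log expression to diverge to $-\infty$ exactly when $\lambda$ is an eigenvalue, so that \eqref{sFinite1} agrees with \eqref{sMatrix} as functions into $\mathbb{R}\cup\{-\infty\}$, and that the monotone limit in \eqref{sFinite2} respects this. Everything else is routine linear algebra.
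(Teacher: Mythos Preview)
Your proof is correct and follows essentially the same route as the paper: both arguments reduce to the identity $\mathrm{trace}(\log P)=\log\det P$ for a positive matrix $P$, applied with $P=(A-\lambda)^{\ast}(A-\lambda)$, and then pass to the $\varepsilon$-regularized version by continuity. The only cosmetic difference is that the paper derives this identity from $\det(e^{X})=e^{\mathrm{trace}(X)}$ with $X=\log P$, whereas you obtain it by diagonalizing and summing $\log\sigma_k^{2}$; your treatment of the $-\infty$ case is also more explicit than the paper's.
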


Note that in (\ref{sFinite1}), the logarithm is undefined when $\lambda$ is an
eigenvalue of $A.$ In (\ref{sFinite2}), inserting $\varepsilon>0$ guarantees
that the logarithm is well defined for all $\lambda,$ but a singularity of
$s(\lambda)$ at each eigenvalue still arises in the limit as $\varepsilon$
approaches zero.

\begin{proof}
An elementary result \cite[Theorem 2.12]{HallLie} says that for any matrix
$X,$ we have $\det(e^{X})=e^{\mathrm{trace}(X)}$. If $P$ is a strictly
positive matrix, we may apply this result with $X=\log P$ (so that $e^{X}=P$)
to get%
\[
\det(P)=e^{\mathrm{trace}(X)}%
\]
or%
\[
\mathrm{trace}(\log P)=\log[\det P].
\]
Let us now apply this identity with $P=(A-\lambda)^{\ast}(A-\lambda),$
whenever $\lambda$ is not an eigenvalue of $A,$ to obtain%
\begin{align*}
\frac{1}{N}\mathrm{trace}[\log((A-\lambda)^{\ast}(A-\lambda))]  &  =\frac
{1}{N}\log[\det((A-\lambda)^{\ast}(A-\lambda))]\\
&  =\frac{1}{N}\log[\det(A-\lambda)^{\ast}\det(A-\lambda)]\\
&  =\log(\left\vert \det(A-\lambda)\right\vert ^{2/N}),
\end{align*}
where this last expression is the definition of $s(\lambda).$

Continuity of the matrix logarithm then establishes (\ref{sFinite2}).
\end{proof}

\subsection{Definition and basic properties}

To define the Brown measure of a general element $x$ in a tracial von Neumann
algebra $(\mathcal{A},\tau),$ we use the obvious generalization of
(\ref{sFinite2}). We refer to Brown's original paper \cite{Br} along with
Chapter 11 of \cite{MS} for general references on the material in this section.

\begin{theorem}
\label{BrownMeasure.thm}Let $(\mathcal{A},\tau)$ be a tracial von Neumann
algebra and let $x$ be an arbitrary element of $\mathcal{A}.$ Define%
\begin{equation}
S(\lambda,\varepsilon)=\tau\lbrack\log((x-\lambda)^{\ast}(x-\lambda
)+\varepsilon)]\label{sgeneral1}%
\end{equation}
for all $\lambda\in\mathbb{C}$ and $\varepsilon>0.$ Then
\begin{equation}
s(\lambda):=\lim_{\varepsilon\rightarrow0^{+}}S(\lambda,\varepsilon
)\label{sgeneral2}%
\end{equation}
exists as an almost-everywhere-defined subharmonic function. Furthermore, the
quantity%
\begin{equation}
\frac{1}{4\pi}\Delta s,\label{sgeneral3}%
\end{equation}
where the Laplacian is computed in the distribution sense, is represented by a
probability measure on the plane.\ We call this measure the Brown measure of
$x$ and denote it by $\mu_{x}.$

The Brown measure of $x$ is supported on the spectrum $\sigma(x)$ of $x$ and
has the property that%
\begin{equation}
\int_{\sigma(x)}\lambda^{k}~d\mu_{x}(\lambda)=\tau(x^{k})\label{holoMoment}%
\end{equation}
for all non-negative integers $k.$
\end{theorem}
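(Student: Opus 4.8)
The plan is to establish the four assertions in turn: (i) existence of $s(\lambda) = \lim_{\varepsilon\to 0^+} S(\lambda,\varepsilon)$ as an a.e.-defined subharmonic function; (ii) that $\frac{1}{4\pi}\Delta s$ is a probability measure; (iii) that this measure is supported on $\sigma(x)$; and (iv) the holomorphic moment identity \eqref{holoMoment}. For (i), I would first observe that for each fixed $\lambda$, the map $\varepsilon\mapsto S(\lambda,\varepsilon)$ is monotone: since $(x-\lambda)^\ast(x-\lambda)+\varepsilon$ is an increasing family of positive operators as $\varepsilon$ increases, and $\log$ is operator-monotone, $S(\lambda,\varepsilon)$ decreases as $\varepsilon\downarrow 0$. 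Hence the limit exists in $[-\infty,\infty)$. To control it, introduce $\mu_{|x-\lambda|}$, the (spectral) distribution of the positive operator $|x-\lambda| = ((x-\lambda)^\ast(x-\lambda))^{1/2}$ relative to $\tau$; then $S(\lambda,\varepsilon) = \int_0^\infty \log(t^2+\varepsilon)\, d\mu_{|x-\lambda|}(t)$, and $s(\lambda) = 2\int_0^\infty \log t \, d\mu_{|x-\lambda|}(t) = \int \log|t|\, d(\text{pushforward})$. One then shows $\lambda\mapsto S(\lambda,\varepsilon)$ is subharmonic for each $\varepsilon>0$ (it is a $\tau$-average of $\log\det$-type quantities; more carefully, for finite matrices $\log\det((A-\lambda)^\ast(A-\lambda)+\varepsilon)$ is subharmonic in $\lambda$ as a sum $\sum\log(|\mu_j(\lambda)|^2+\varepsilon)$ over singular values, and this passes to the von Neumann algebra setting via the $\tau$-integral), and that $s$, being a decreasing limit of subharmonic functions that stays locally integrable (because $\log\|(x-\lambda)\|^2$ gives an upper bound and $S(\lambda,1)$ is a lower... actually $S(\lambda,\varepsilon)\ge s(\lambda)$ gives only an upper bound; the needed lower integrability bound comes from the fact that $\int_{|\lambda|\le R} s(\lambda)\,dA(\lambda) > -\infty$, which one checks using Fubini and $\int_{|\lambda|\le R}\log|\lambda - t|\,dA(\lambda) > -\infty$), is itself subharmonic a.e.-defined. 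This is the step I expect to be the most delicate: pinning down exactly which integrability/local-boundedness estimate makes the decreasing limit of subharmonic functions subharmonic rather than identically $-\infty$, and verifying it uniformly enough in $\lambda$.

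For (ii), once $s$ is subharmonic and locally integrable, $\Delta s$ (distributionally) is automatically a nonnegative Radon measure by the Riesz representation theorem for subharmonic functions; the content is that its total mass equals $4\pi$. I would compute the mass by a potential-theoretic argument: for $|\lambda|$ large, $(x-\lambda)^\ast(x-\lambda) = |\lambda|^2(1 - x/\lambda)^\ast(1-x/\lambda)$, so $s(\lambda) = 2\log|\lambda| + \tau[\log((1-x/\lambda)^\ast(1-x/\lambda))] = 2\log|\lambda| + O(1/|\lambda|)$ as $|\lambda|\to\infty$, using a power-series expansion of $\log$ valid for $|\lambda| > \|x\|$. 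Then for $R$ large, $\int_{|\lambda|=R} \frac{\partial s}{\partial n}\,d\sigma \to 4\pi$ (matching $\Delta(2\log|\lambda|) = 4\pi\,\delta_0$ in mass), and Green's theorem gives $\int_{\mathbb C}\Delta s = 4\pi$, so $\frac{1}{4\pi}\Delta s$ is a probability measure. This also shows $\mu^x$ has compact support inside $\{|\lambda|\le\|x\|\}$.

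For (iii), I would show $s$ is harmonic (hence $\Delta s = 0$) on the resolvent set $\rho(x) = \mathbb C\setminus\sigma(x)$: if $\lambda_0\notin\sigma(x)$ then $x-\lambda_0$ is invertible, so $(x-\lambda)^\ast(x-\lambda)$ is bounded below by a positive multiple of the identity for $\lambda$ near $\lambda_0$, whence $S(\lambda,\varepsilon)$ converges uniformly and $s(\lambda) = \tau[\log((x-\lambda)^\ast(x-\lambda))]$ is real-analytic there; one checks $\Delta_\lambda \tau[\log((x-\lambda)^\ast(x-\lambda))] = 0$ directly, e.g.\ by differentiating under $\tau$ and using $\Delta = 4\partial_\lambda\partial_{\bar\lambda}$ together with $\partial_{\bar\lambda}\log((x-\lambda)^\ast(x-\lambda)) = -((x-\lambda)^\ast)^{-1}(x-\lambda)^\ast = -(x-\lambda)^{-1}\cdot(\text{something})$... more cleanly, $s(\lambda) = \tau[\log(x-\lambda)^\ast] + \tau[\log(x-\lambda)]$ when a holomorphic logarithm branch exists locally, and each term is (anti)harmonic; in general one argues via $\partial_\lambda\partial_{\bar\lambda}$ acting on the trace of the operator logarithm and the resolvent identity. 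For (iv), the holomorphic moments: fix $k\ge 0$ and integrate $\lambda^k$ against $\mu^x = \frac{1}{4\pi}\Delta s$. Using that $\lambda^k$ is harmonic and the asymptotics $s(\lambda) = 2\log|\lambda| + \tau[\log(1-x/\lambda)] + \tau[\log(1-x/\lambda)^\ast] + \dots$, I expand $\tau[\log(1-x/\lambda)] = -\sum_{n\ge 1}\tau(x^n)/(n\lambda^n)$ for $|\lambda|$ large; then Green's theorem applied on a large disk, matching the $\lambda^{-k}$ Laurent coefficient, extracts $\int \lambda^k\,d\mu^x = \tau(x^k)$. Concretely, $\int_{\mathbb C}\lambda^k \Delta s\,dA = \lim_{R\to\infty}\oint_{|\lambda|=R}(\lambda^k\,\partial_n s - s\,\partial_n\lambda^k)\,d\sigma$, and plugging in the asymptotic expansion of $s$ and $\partial_n s$ and doing the circle integral term-by-term picks out precisely $4\pi\,\tau(x^k)$ (the $2\log|\lambda|$ term and the antiholomorphic $\tau[\log(1-\bar x/\bar\lambda)]$ term contribute zero against $\lambda^k$ for $k\ge 1$, and for $k=0$ reproduce the total mass $1$).
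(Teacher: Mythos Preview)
The paper does not actually give its own proof of this theorem: immediately after the statement it simply refers the reader to Brown's original article \cite{Br} and to Chapter~11 of Mingo--Speicher \cite{MS}, and moves on. So there is no paper-side proof to compare against; this result is being quoted as background.

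That said, your outline tracks the standard argument found in those references quite closely: monotone limit in $\varepsilon$, subharmonicity of each $S(\cdot,\varepsilon)$, decreasing limits of subharmonic functions, Riesz representation for $\Delta s \ge 0$, asymptotics $s(\lambda)=2\log|\lambda|+O(|\lambda|^{-1})$ to get total mass $4\pi$, harmonicity on the resolvent set for the support claim, and a Green's-formula/Laurent-expansion extraction of the holomorphic moments. Two places where your sketch is rougher than it needs to be: first, the subharmonicity of $\lambda\mapsto S(\lambda,\varepsilon)$ is not best seen through singular values (which are not analytic in $\lambda$); the clean route is to differentiate directly under $\tau$ and compute $\partial_\lambda\partial_{\bar\lambda}S(\lambda,\varepsilon)=\varepsilon\,\tau\bigl[((x-\lambda)^\ast(x-\lambda)+\varepsilon)^{-1}((x-\lambda)(x-\lambda)^\ast+\varepsilon)^{-1}\bigr]\ge 0$. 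Second, for harmonicity on $\rho(x)$, writing $s(\lambda)=\tau[\log(x-\lambda)^\ast]+\tau[\log(x-\lambda)]$ is not really licit for non-normal $x-\lambda$; instead one just lets $\varepsilon\to 0$ in the displayed Laplacian above and observes that the limit is $0$ when $(x-\lambda)$ is invertible (since $((x-\lambda)^\ast(x-\lambda))^{-1}$ stays bounded), or one invokes multiplicativity of the Fuglede--Kadison determinant to identify $s$ locally with the real part of an analytic function.
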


See the original article \cite{Br} or Chapter 11 of the monograph \cite{MS} of
Mingo and Speicher. We also note that the quantity $s(\lambda)$ is the
logarithm of the \textbf{Fuglede--Kadison determinant} of $x-\lambda$; see
\cite{FK1,FK2}. It is important to emphasize that, in general, the moment
condition (\ref{holoMoment}) \textit{does not uniquely determine the measure
}$\mu_{x}.$ After all, $\sigma(x)$ is an arbitrary nonempty compact subset of
$\mathbb{C},$ which could, for example, be a closed disk. To uniquely
determine the measure, we would need to know the value of $\int_{\sigma
(x)}\lambda^{k}\bar{\lambda}^{l}~d\mu_{x}(\lambda)$ for all non-negative
integers $k$ and $l.$ There is not, however, any simple way to compute the
value of $\int_{\sigma(x)}\lambda^{k}\bar{\lambda}^{l}~d\mu_{x}(\lambda)$ in
terms of the operator $x$. In particular, unless $x$ is normal, this integral
need not be equal to $\tau\lbrack x^{k}(x^{\ast})^{l}].$ Thus, to compute the
Brown measure of a general operator $x\in\mathcal{A},$ we actually have to
work with the rather complicated definition in (\ref{sgeneral1}),
(\ref{sgeneral2}), and (\ref{sgeneral3}).

We note two important special cases.

\begin{itemize}
\item Suppose $\mathcal{A}$ is the space of all $N\times N$ matrices and
$\tau$ is the normalized trace, $\tau\lbrack x]=\frac{1}{N}\mathrm{trace}(x).$
Then the Brown measure of any $x\in\mathcal{A}$ is simply the empirical
eigenvalue distribution of $x,$ which puts mass $1/N$ at each eigenvalue of
$x.$

\item If $x$ is normal, then the Brown measure $\mu_{x}$ of $x$ agrees with
the measure defined in (\ref{spectral}) using the spectral theorem.
\end{itemize}

\subsection{Brown measure in random matrix theory}

Suppose one has a family of $N\times N$ random matrix models $X^{N}$ and one
wishes to determine the large-$N$ limit of the empirical eigenvalue
distribution of $X^{N}.$ (Recall Definition \ref{eed.def}.) One may naturally
use the following three-step process.

\textbf{Step 1}. Construct a large-$N$ limit of $X^{N}$ as an operator $x$ in
a tracial von Neumann algebra $(\mathcal{A},\tau).$

\textbf{Step 2}. Determine the Brown measure $\mu_{x}$ of $x.$

\textbf{Step 3}. Prove that the empirical eigenvalue distribution of $X^{N}$
converges almost surely to $x$ as $N$ tends to infinity.

It is important to emphasize that Step 3 in this process is not automatic.
Indeed, this can be a difficult technical problem. Nevertheless, this article
is concerned with exclusively with Step 2 in the process (in situations where
Step 1 has been carried out). For Step 3, the main tool is the Hermitization
method developed in Girko's pioneering paper \cite{Girko} and further refined
by Bai \cite{Bai}. (Although neither of these authors explicitly uses the
terminology of Brown measure, the idea is lurking there.)

There exist certain pathological examples where the limiting eigenvalue
distribution does not coincide with the Brown measure. In light of a result of
\'{S}niady \cite{Sniady}, we can say that such examples are associated with
spectral instability, that is, matrices where a small change in the matrix
produces a large change in the eigenvalues. \'{S}niady shows that if we add to
$X^{N}$ a small amount of random Gaussian noise, then eigenvalues distribution
of the perturbed matrices will converge to the Brown measure of the limiting
object. (See also the papers \cite{GWZ} and \cite{FPZ}, which obtain similar
results by very different methods.) Thus, if the original random matrices
$X^{N}$ are somehow \textquotedblleft stable,\textquotedblright\ adding this
noise should not change the eigenvalues of $X^{N}$ by much, and the
eigenvalues of the original and perturbed matrices should be almost the same.
In such a case, we should get convergence of the eigenvalues of $X^{N}$ to the
Brown measure of the limiting object.

The canonical example in which instability occurs is the case in which
$X^{N}=\mathrm{nil}_{N},$ the deterministic $N\times N$ matrix having $1$'s
just above the diagonal and 0's elsewhere. Then of course $\mathrm{nil}_{N}$
is nilpotent, so all of its eigenvalues are zero. We note however, that both
$\mathrm{nil}_{N}^{\ast}\mathrm{nil}_{N}$ and $\mathrm{nil}_{N}\mathrm{nil}%
_{N}^{\ast}$ are diagonal matrices whose diagonal entries have $N-1$ values of
1 and only a single value of 0. Thus, when $N$ is large, $\mathrm{nil}_{N}$ is
\textquotedblleft almost unitary,\textquotedblright\ in the sense that
$\mathrm{nil}_{N}^{\ast}\mathrm{nil}_{N}$ and $\mathrm{nil}_{N}\mathrm{nil}%
_{N}^{\ast}$ are close to the identity. Furthermore, for any positive integer
$k,$ we have that $\mathrm{nil}_{N}^{k}$ is again nilpotent, so that
$\mathrm{trace}[\mathrm{nil}_{N}^{k}]=0.$ Using these observations, it is not
hard to show that the limiting object is a \textquotedblleft Haar
unitary,\textquotedblright\ that is, a unitary element $u$ of a tracial von
Neumann algebra satisfying $\tau(u^{k})=0$ for all positive integers $k.$ The
Brown measure of a Haar unitary is the uniform probability measure on the unit
circle, while of course the eigenvalue distribution $X^{N}$ is entirely
concentrated at the origin.

In Figure \ref{perturb1.fig} we see that even under a quite small perturbation
(adding $10^{-6}$ times a Ginibre matrix), the spectrum of the nilpotent
matrix $X^{N}$ changes quite a lot. After the perturbation, the spectrum
clearly resembles a uniform distribution over the unit circle. In Figure
\ref{perturb2.fig}, by contrast, we see that even under a much larger
perturbation (adding $10^{-1}$ times a Ginibre matrix), the spectrum of a GUE
matrix changes only slightly. (Note the vertical scale in Figure
\ref{perturb2.fig}.)%

\begin{figure}[ptb]%
\centering
\includegraphics[
height=2.1179in,
width=4.0283in
]%
{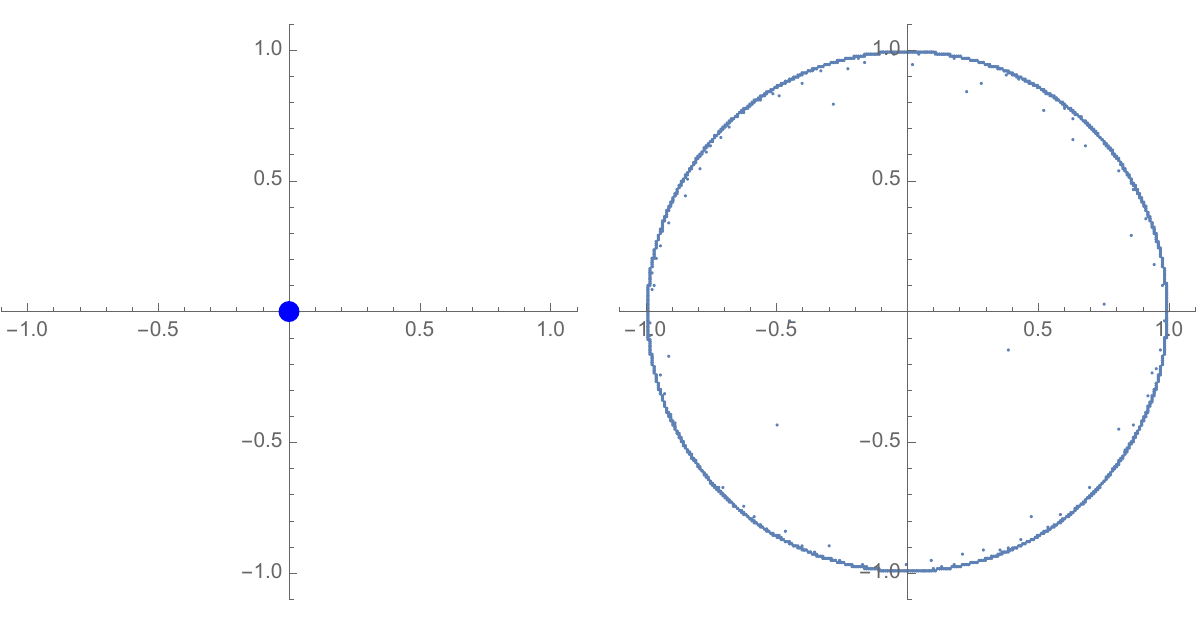}%
\caption{Spectrum of the nilpotent matrix $X$ (left) and $X+\varepsilon
($Ginibre$)$ with $\varepsilon=10^{-5}$ (right), with $N=2,000$}%
\label{perturb1.fig}%
\end{figure}
%

\begin{figure}[ptb]%
\centering
\includegraphics[
height=1.1035in,
width=4.0283in
]%
{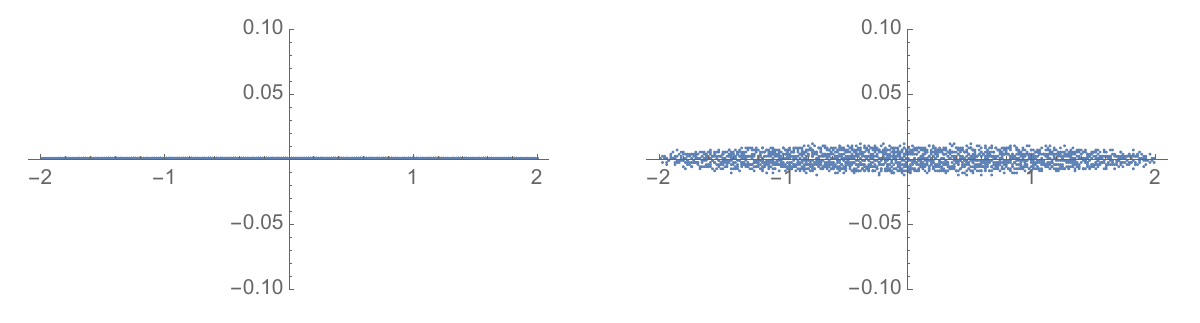}%
\caption{Spectrum of a GUE matrix $X$ (left) and $X+\varepsilon($Ginibre$)$
with $\varepsilon=10^{-1}$ (right), with $N=2,000$}%
\label{perturb2.fig}%
\end{figure}

\subsection{The case of the circular Brownian motion}

We now record the Brown measure of the circular Brownian motion.

\begin{proposition}
\label{BrownCircular.prop}For any $t>0,$ the Brown measure of $c_{t}$ is the
uniform probability measure on the disk of radius $\sqrt{t}$ centered at the origin.
\end{proposition}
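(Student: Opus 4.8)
The plan is to compute, for $x = c_t$, the subharmonic function $s$ of Theorem~\ref{BrownMeasure.thm} by exploiting the time parameter. I would study the regularized quantity
\[
S(t,\lambda,\varepsilon) := \tau\!\left[\log\!\left((c_t-\lambda)^*(c_t-\lambda)+\varepsilon\right)\right],
\]
derive a first-order PDE for it in $t$, solve that PDE by the method of characteristics from the initial data at $t=0$, let $\varepsilon\to 0^+$ to recover $s(\lambda)=\lim_{\varepsilon\to0^+}S(t,\lambda,\varepsilon)$, and finally compute the distributional Laplacian. By rotational invariance of the $*$-distribution of $c_t$, the function $S$ depends on $\lambda$ only through $\rho:=|\lambda|^2$, which cuts down the number of independent variables.

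The initial condition is immediate: since $c_0=0$, we have $S(0,\lambda,\varepsilon)=\log(\rho+\varepsilon)$. For the evolution in $t$ the natural device is Hermitization: put $\hat W_t=\bigl(\begin{smallmatrix}0 & c_t-\lambda\\ (c_t-\lambda)^* & 0\end{smallmatrix}\bigr)$ in $M_2(\mathcal A)$, so that $S(t,\lambda,\varepsilon)=(\mathrm{tr}_2\otimes\tau)[\log(\hat W_t^2+\varepsilon)]$. Because the increments of $c_t$ are freely independent and $c_t-c_s$ is circular of variance $t-s$, the element $\hat W_t$ evolves by addition of a free $M_2(\mathbb C)$-valued semicircular term whose covariance, computed from $\tau(cc^*)=\tau(c^*c)=1$ and $\tau(c^2)=0$, is $\eta\bigl(\begin{smallmatrix}b_{11}&b_{12}\\b_{21}&b_{22}\end{smallmatrix}\bigr)=\bigl(\begin{smallmatrix}b_{22}&0\\0&b_{11}\end{smallmatrix}\bigr)$. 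Either applying the free Itô formula to $\log(\hat W_t^2+\varepsilon)$, or differentiating the operator-valued subordination (Dyson) equation $G_t(Z)=\bigl(Z+\Lambda-t\,\eta(G_t(Z))\bigr)^{-1}$ for the matrix Cauchy transform (with $\Lambda=\bigl(\begin{smallmatrix}0&\lambda\\\bar\lambda&0\end{smallmatrix}\bigr)$) and recovering $S$ from $G_t$, one finds that $S$ satisfies
\[
\frac{\partial S}{\partial t}=\varepsilon\left(\frac{\partial S}{\partial\varepsilon}\right)^{2},
\]
in which $\rho$ enters only through the initial data.

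Solving by characteristics, the characteristic issuing from $\varepsilon_0$ at $t=0$ carries the value $S=\log(\rho+\varepsilon_0)-t\varepsilon_0/(\rho+\varepsilon_0)^2$ to the point with $\varepsilon=\varepsilon_0(\rho+\varepsilon_0-t)^2/(\rho+\varepsilon_0)^2$; the solution relevant for us is the one with $\varepsilon_0=v^2$, where $v=v(t,\lambda,\varepsilon)\ge 0$ is the root of the cubic $(v-\sqrt\varepsilon)(v^2+\rho)=tv$ picked out by the Dyson equation. Letting $\varepsilon\to0^+$ gives $v\to v_0:=\sqrt{(t-\rho)_+}$, hence $\varepsilon_0\to(t-\rho)_+$, and therefore
\[
s(\lambda)=\begin{cases}\log t+\dfrac{|\lambda|^2}{t}-1, & |\lambda|^2\le t,\\ \log|\lambda|^2, & |\lambda|^2\ge t.\end{cases}
\]
A short computation then shows that $s$ and $\nabla s$ are continuous across the circle $|\lambda|=\sqrt t$ (so $\Delta s$ has no singular part there), that $s$ is harmonic for $|\lambda|>\sqrt t$, and that $\Delta(|\lambda|^2/t)=4/t$ for $|\lambda|<\sqrt t$; hence $\frac{1}{4\pi}\Delta s=\frac{1}{\pi t}\,\mathbf 1_{\{|\lambda|<\sqrt t\}}$, which is exactly the uniform probability measure on the disk of radius $\sqrt t$. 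As consistency checks, this measure has mass $1$, it is rotationally invariant with $\int\lambda^k\,d\mu=0$ for $k\ge1$ in agreement with $\tau(c_t^k)=0$ and \eqref{holoMoment}, and it is supported on $\sigma(c_t)$.

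The main obstacle is the derivation and rigorous justification of the PDE: this is where free probability does the real work. One must justify differentiating $S$ through the spectral calculus of $(c_t-\lambda)^*(c_t-\lambda)$ (an operator that is positive but not bounded below), control the logarithmic singularity at $0$ via the $\varepsilon$-regularization, and carry out the free stochastic-calculus (equivalently, operator-valued subordination) computation that collapses an a priori second-order generator into the clean first-order form above. Once the PDE is in hand, the remaining steps — solving the scalar equation, selecting the correct root of the cubic in the limit $\varepsilon\to0$ (which is fixed by the standard analyticity and positivity properties of the operator-valued Cauchy transform solving the Dyson equation), and computing the Laplacian — are routine.
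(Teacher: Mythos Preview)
Your proposal is correct and follows essentially the same route as the paper: derive the PDE $\partial_t S=\varepsilon(\partial_\varepsilon S)^2$ for the regularized log-potential, solve by characteristics (what the paper calls the Hamilton--Jacobi method, Theorem~\ref{odeSoln.thm}), send $\varepsilon\to0^+$ by selecting the appropriate initial point $\varepsilon_0$ (your $\varepsilon_0\to(t-\rho)_+$ is exactly the paper's two-case split $\varepsilon_0\to0$ versus $\varepsilon_0\to t-|\lambda|^2$), and take the Laplacian. The one variation is in how you propose to obtain the PDE---via the $2\times2$ Hermitization and the operator-valued Dyson/subordination equation---whereas the paper expands the logarithm as a power series and applies the free It\^o rules term by term (Lemma~\ref{nthPower.lem}); both devices are standard and lead to the same equation.
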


Now, as we noted in Section \ref{circBrownian.sec}, the $\ast$-distribution of
the circular Brownian motion at any time~$t>0$ is the same as the $\ast
$-distribution of $\sqrt{t}c_{1}.$ Thus, the proposition will follow if we
know that the Brown measure of a circular element $c$ is the uniform
probability measure on the unit disk. This result, in turn, is well known;
see, for example, Section 11.6.3 of \cite{MS}.

\section{PDE for the circular law}

The PDE methods developed in \cite{DHKBrown} were extended by Ho and Zhong
\cite{HZ} to the case of a free multiplicative Brownian motion with an
arbitrary unitary initial distribution. Ho and Zhong also analyze
\cite[Section 3]{HZ} the case of the free circular Brownian motion with an
arbitrary self-adjoint initial condition. In this article, I explain how the
proof in \cite{HZ} works in the very special case of the free circular
Brownian motion with an initial condition of 0.

The significance of this analysis is not so much that it gives another
computation of the Brown measure of a circular element. Rather, it is a
helpful warm-up case on the path to tackling more complicated problems, such
as those in \cite{DHKBrown} and \cite{HZ}. In this section and the two that
follow, I will show how the PDE\ method applies in the case of the circular
Brownian motion. Then in the last section, I will outline the case of the free
multiplicative Brownian motion, leaving the details to \cite{DHKBrown}.

We let $c_{t}$ be the circular Brownian motion (Section \ref{circBrownian.sec}%
). Then, following the construction of the Brown measure in Theorem
\ref{BrownMeasure.thm}, we define, for each $\lambda\in\mathbb{C},$ a function
$S^{\lambda}$ given by
\begin{equation}
S^{\lambda}(t,\varepsilon)=\tau\lbrack\log((c_{t}-\lambda)^{\ast}%
(c_{t}-\lambda)+\varepsilon)]\label{Sdef}%
\end{equation}
for all $t>0$ and $\varepsilon>0.$ The Brown measure of $c_{t}$ will then be
obtained by letting $\varepsilon$ tend to zero, taking the Laplacian with
respect to $\lambda,$ and dividing by $4\pi.$ The first main
result---following \cite[Section 3]{HZ}---is that, for each $\lambda,$
$S^{\lambda}(t,\varepsilon)$ satisfies a PDE\ in $t$ and $\varepsilon.$

\begin{theorem}
\label{circularPDE.thm}For each $\lambda\in\mathbb{C},$ the function
$S^{\lambda}$ satisfies the first-order, nonlinear differential equation%
\begin{equation}
\frac{\partial S^{\lambda}}{\partial t}=\varepsilon\left(  \frac{\partial
S^{\lambda}}{\partial\varepsilon}\right)  ^{2}\label{additvePDE}%
\end{equation}
subject to the initial condition%
\[
S^{\lambda}(0,\varepsilon)=\log(\left\vert \lambda\right\vert ^{2}%
+\varepsilon).
\]

\end{theorem}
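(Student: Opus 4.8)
The plan is to derive the PDE by differentiating the defining expression $S^{\lambda}(t,x)=\tau[\log((c_t-\lambda)^*(c_t-\lambda)+x)]$ in $t$ and in $x$ and then combining the results. Write $a_t = c_t - \lambda$ and $B_t = a_t^* a_t + x$, so that $S^{\lambda}(t,x) = \tau[\log B_t]$ with $B_t$ a positive operator. The two ingredients I would assemble are: (i) the $x$-derivative, which is immediate since $\partial_x B_t = 1$ and hence $\partial S^{\lambda}/\partial x = \tau[B_t^{-1}]$; and (ii) the $t$-derivative, which requires an It\^o-type computation for the free stochastic process $c_t$. The subharmonic-function subtleties in Theorem~\ref{BrownMeasure.thm} do not enter here: we are working at fixed $x>0$, where $B_t$ is bounded below by $x$, so $\log B_t$ is a genuine smooth function of the operator and all manipulations are legitimate.

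For the $t$-derivative I would use the free It\^o calculus for the circular (free additive) Brownian motion: $c_t$ has free increments distributed as those of $\sqrt{t}\,c_1$, which gives the free It\^o rule $dc_t\, a\, dc_t = \tau(a)\,dt$ and $dc_t\, a\, dc_t^* = 0 = dc_t^*\, a\, dc_t$, with $dc_t^*\, a\, dc_t^* = \tau(a)\,dt$ as well (the relevant covariance structure is $\tau(c_t^* c_t)=t$, $\tau(c_t^2)=0$). Expanding $d(\log B_t)$ to second order — first order in $da_t = dc_t$ and $da_t^* = dc_t^*$ — and applying $\tau$, the first-order terms vanish in expectation (the increments are free from, and centered relative to, $B_t$ and its spectral projections), and the surviving second-order contribution is a single trace term. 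Tracking the combinatorics of $\log$ expanded as a power series (or, more cleanly, using the integral representation $\log B = \int_0^\infty (\frac{1}{1+s} - \frac{1}{B+s})\,ds$ and differentiating the resolvent $(B_t+s)^{-1}$), the It\^o correction collapses to
\[
\frac{\partial S^{\lambda}}{\partial t} = \tau\!\left[(a_t^* a_t + x)^{-1} \, a_t \, (a_t^* a_t + x)^{-1} \, a_t^*\right],
\]
after the free-independence-induced factorization replaces one resolvent-sandwiched pair by $\tau$ of the rest. I would then massage the right-hand side using the trace property and the algebraic identity relating $a(a^*a+x)^{-1}a^*$ to $1 - x(aa^*+x)^{-1}$, together with $\tau[f(a^*a)] = \tau[f(aa^*)]$ for positive $a^*a$ shifted by $x$, to rewrite $\partial S^{\lambda}/\partial t = \big(\tau[(a_t^*a_t+x)^{-1}]\big)^2 \cdot x$ — in other words, $x\,(\partial S^{\lambda}/\partial x)^2$, which is the claimed equation \eqref{additvePDE}.

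The main obstacle I anticipate is the careful bookkeeping in step (ii): justifying the free It\^o expansion of $\log B_t$ (a nonpolynomial function) and correctly identifying which of the quadratic-variation terms survive under $\tau$ given the circular (rather than semicircular) covariance — in particular, that the $dc_t\,\cdot\,dc_t$ and $dc_t^*\,\cdot\,dc_t^*$ pieces contribute nothing because $\tau(c_t^2)=0$, leaving only the $dc_t\,\cdot\,dc_t^*$-type pairing. The cleanest route is probably to avoid stochastic calculus entirely and instead compute $\partial_t S^{\lambda}$ by combining: the known formula (e.g.\ from the free SDE or from the moment recursions for $c_t$) for how $\tau[g(c_t,c_t^*)]$ evolves, applied to $g = \log$; or, alternatively, to use that $c_t \overset{d}{=}\sqrt{t}\,c_1$ to get a scaling relation $S^{\lambda}(t,x) = S^{\lambda/\sqrt t}(1, x/t) + \log t$, differentiate that in $t$, and reconcile with the $x$-derivative — but this only handles the scaling direction, so one still needs one genuine computation of a resolvent trace. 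Once the right-hand side is expressed as a product of two copies of $\tau[(a_t^*a_t+x)^{-1}]$ times $x$, the conclusion is immediate, and the initial condition follows by setting $t=0$, where $c_0 = 0$ gives $B_0 = |\lambda|^2 + x$ and $\tau[\log(|\lambda|^2+x)] = \log(|\lambda|^2+x)$.
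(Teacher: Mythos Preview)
Your overall plan---compute $\partial_x S^\lambda=\tau[B_t^{-1}]$ directly, obtain $\partial_t S^\lambda$ via the free It\^o calculus for $c_t$, and combine---is exactly the paper's strategy. But two concrete errors derail the execution.

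First, you have the It\^o rules for the circular Brownian motion reversed. The correct rules (see (\ref{Ito1})--(\ref{Ito2})) are
\[
dc_t\,g_t\,dc_t^{\ast}=dc_t^{\ast}\,g_t\,dc_t=\tau(g_t)\,dt,\qquad dc_t\,g_t\,dc_t=dc_t^{\ast}\,g_t\,dc_t^{\ast}=0,
\]
consistent with $\tau(c_t^{\ast}c_t)=t$ and $\tau(c_t^{2})=0$. You assert the opposite, and then in your last paragraph correctly argue that only the $dc_t\cdot dc_t^{\ast}$ pairings survive---contradicting your own displayed rules.

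Second, and more seriously, your displayed formula for $\partial_t S^\lambda$ is wrong. The free It\^o rule (\ref{Ito1}) \emph{produces} a factorization into a product of two traces (it is the free analogue of the identity $\sum_j X_jAX_j=-\mathrm{tr}(A)$ in (\ref{magic})); a single trace containing two resolvents cannot be its output. A scalar sanity check with $a_t=\alpha\in\mathbb{C}$ gives $|\alpha|^{2}/(|\alpha|^{2}+x)^{2}$ from your expression versus the correct value $x/(|\alpha|^{2}+x)^{2}$. The actual outcome of the It\^o computation is already a product of traces, namely (\ref{dsdt0}),
\[
\frac{\partial S^\lambda}{\partial t}=x\,\tau[(a_t^{\ast}a_t+x)^{-1}]\,\tau[(a_ta_t^{\ast}+x)^{-1}],
\]
and no further ``massage'' with the identity $a(a^{\ast}a+x)^{-1}a^{\ast}=1-x(aa^{\ast}+x)^{-1}$ is needed to reach $x(\partial_x S^\lambda)^{2}$; one simply uses $\tau[f(a^{\ast}a)]=\tau[f(aa^{\ast})]$.

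Finally, the paper sidesteps the difficulty you yourself flag---applying It\^o to the nonpolynomial $\log B_t$---by first proving the It\^o identity only for polynomial moments $\tau[(a_t^{\ast}a_t)^{n}]$ (Lemma~\ref{nthPower.lem}), then expanding $\log(a_t^{\ast}a_t+x)$ as a power series valid for large $|x|$, differentiating term by term, recognizing the resulting double sum as a Cauchy product of two geometric series (which is where the product of traces appears), and extending to all $x>0$ by analytic continuation in $x$. This route is both rigorous and avoids any resolvent- or log-level stochastic calculus.
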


This result (with a more general initial condition) was established in
Proposition 3.1 of \cite{HZ}. We may now see the motivation for making
$\lambda$ a parameter rather than a variable for $S$: since $\lambda$ does not
appear in the PDE (\ref{additvePDE}), we can think of solving the same
equation for each different value of $\lambda,$ with the dependence on
$\lambda$ entering only through the initial conditions.

On the other hand, we see that the regularization parameter $\varepsilon$
plays a crucial role here as one of the variables in our PDE. Of course, we
are ultimately interested in letting $\varepsilon$ tend to zero, but since
derivatives with respect to $\varepsilon$ appear, we cannot merely set
$\varepsilon=0$ in the PDE.

Of course, the reader will point out that, formally, setting $\varepsilon=0$
in (\ref{additvePDE}) gives $\partial S^{\lambda}(t,0)/\partial t=0,$ because
of the leading factor of $\varepsilon$ on the right-hand side. This
conclusion, however, is not actually correct, because $\partial S^{\lambda
}/\partial\varepsilon$ can blow up as $\varepsilon$ approaches zero. Actually,
it will turn out that $S^{\lambda}(t,0)$ \textit{is} independent of $t$ when
$\left\vert \lambda\right\vert >\sqrt{t},$ but not in general.

\subsection{The finite-$N$ equation\label{finiteN.sec}}

In this subsection, we give a heuristic argument for the PDE\ in Theorem
\ref{circularPDE.thm}. Although the argument is not rigorous as written, it
should help explain what is going on. In particular, the computations that
follow should make it clear why the PDE is only valid after taking the
large-$N$ limit.

\subsubsection{The result}

We introduce a finite-$N$ analog of the function $S^{\lambda}$ in Theorem
\ref{circularPDE.thm} and compute its time derivative. Let $C_{t}^{N}$ denote
the Ginibre Brownian motion introduced in Section \ref{ginBrownian.sec}.

\begin{proposition}
\label{finiteN.prop}For each $N,$ let%
\[
S^{\lambda,N}(t,\varepsilon)=\mathbb{E}\{\mathrm{tr}[\log((C_{t}^{N}%
-\lambda)^{\ast}(C_{t}^{N}-\lambda)+\varepsilon)]\}.
\]
Then we have the following results.

\begin{enumerate}
\item \label{timeDeriv.point}The time derivative of $S^{\lambda,N}$ may be
computed as%
\begin{equation}
\frac{\partial S^{\lambda,N}}{\partial t}=\varepsilon\mathbb{E}\{(\mathrm{tr}%
[((C_{t}^{N}-\lambda)^{\ast}(C_{t}^{N}-\lambda)+\varepsilon)^{-1}%
])^{2}\}.\label{dSNdt}%
\end{equation}

\item \label{xDeriv.point}We also have%
\begin{equation}
\frac{\partial}{\partial\varepsilon}\mathrm{tr}[\log((C_{t}^{N}-\lambda
)^{\ast}(C_{t}-\lambda)+\varepsilon)]=\mathrm{tr}[((C_{t}^{N}-\lambda)^{\ast
}(C_{t}^{N}-\lambda)+\varepsilon)^{-1}].\label{dTNdx}%
\end{equation}

\item \label{cov.point}Therefore, if we set%
\[
T^{\lambda,N}=\mathrm{tr}[((C_{t}^{N}-\lambda)^{\ast}(C_{t}^{N}-\lambda
)+\varepsilon)^{-1}],
\]
we may rewrite the formula for $\partial S^{\lambda,N}/\partial t$ as%
\begin{equation}
\frac{\partial S^{\lambda,N}}{\partial t}=\varepsilon\left(  \frac{\partial
S^{\lambda,N}}{\partial\varepsilon}\right)  ^{2}+\varepsilon~\mathrm{Var}%
,\label{derivCov}%
\end{equation}
where $\mathrm{Var}$ is a \textquotedblleft variance term\textquotedblright%
\ given by%
\[
\mathrm{Var}=\mathbb{E}\{(T^{\lambda,N})^{2}\}-(\mathbb{E}\{T^{\lambda
,N}\})^{2}.
\]

\end{enumerate}
\end{proposition}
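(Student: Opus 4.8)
The plan is to establish the three parts in the order (\ref{xDeriv.point}), (\ref{timeDeriv.point}), (\ref{cov.point}): (\ref{xDeriv.point}) is a deterministic (pathwise) identity; (\ref{timeDeriv.point}) comes from applying It\^o's formula to the matrix process $C_t^N$ and taking the expectation; and (\ref{cov.point}) is then a formal consequence of the first two together with the definition of the variance.

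For part~(\ref{xDeriv.point}), fix a realization of $C_t^N$. The matrix $B(x):=(C_t^N-\lambda)^{\ast}(C_t^N-\lambda)+x$ equals $B_0+xI$ with $B_0$ independent of $x$, so its eigenvalues are $\mu_i+x$ and $\mathrm{tr}[\log B(x)]=\tfrac1N\sum_i\log(\mu_i+x)$; differentiating in $x$ gives $\tfrac1N\sum_i(\mu_i+x)^{-1}=\mathrm{tr}[B(x)^{-1}]$, which is (\ref{dTNdx}).

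For part~(\ref{timeDeriv.point}), put $F(Z)=\mathrm{tr}[\log((Z-\lambda)^{\ast}(Z-\lambda)+x)]$; since the argument of the logarithm is $\geq xI$ for $x>0$, $F$ is a smooth real-valued function on $M_N(\mathbb{C})$. By It\^o's formula, $\partial S^{\lambda,N}/\partial t=\mathbb{E}[(\mathcal{A}F)(C_t^N)]$, where $\mathcal{A}$ is the generator of the Ginibre Brownian motion; the martingale part has zero expectation here because $B_t\geq xI$ gives $\mathrm{tr}[B_t^{-1}]\leq x^{-1}$ and $C_t^N$ has moments of all orders. To evaluate $\mathcal{A}F$ I need the first two derivatives of $F$: writing $A=Z-\lambda$ and $B=A^{\ast}A+x$,
\[
dF|_Z(X)=\mathrm{tr}[B^{-1}(X^{\ast}A+A^{\ast}X)],\qquad d^2F|_Z(X,X)=2\,\mathrm{tr}[B^{-1}X^{\ast}X]-\mathrm{tr}\bigl[(B^{-1}(X^{\ast}A+A^{\ast}X))^2\bigr].
\]
Applying $\mathcal{A}$ amounts to contracting $d^2F$ against the covariance of $C_t^N$; because $C_t^N$ is, up to scaling, a matrix of independent \emph{complex} Brownian motions, the ``holomorphic--holomorphic'' and ``antiholomorphic--antiholomorphic'' contractions (those of the form $\sum_\alpha\mathrm{tr}[Pe_\alpha Qe_\alpha]$ or $\sum_\alpha\mathrm{tr}[Pe_\alpha^{\ast}Qe_\alpha^{\ast}]$ over an orthonormal basis $\{e_\alpha\}$) vanish --- this cancellation is exactly what makes the circular case tractable --- while the $X^{\ast}X$ term yields a multiple of $\mathrm{tr}[B^{-1}]$ and each mixed contraction $\sum_\alpha\mathrm{tr}[Pe_\alpha^{\ast}Qe_\alpha]$ equals a constant times $\mathrm{tr}[P]\,\mathrm{tr}[Q]$. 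With the normalization of Section~\ref{ginBrownian.sec} the constants combine to give
\[
\frac{\partial S^{\lambda,N}}{\partial t}=\mathbb{E}\bigl[\,\mathrm{tr}[B_t^{-1}]-\mathrm{tr}[A_tB_t^{-1}A_t^{\ast}]\,\mathrm{tr}[B_t^{-1}]\,\bigr],\qquad A_t:=C_t^N-\lambda,\ \ B_t:=A_t^{\ast}A_t+x,
\]
and the identity $\mathrm{tr}[A_tB_t^{-1}A_t^{\ast}]=\mathrm{tr}[A_t^{\ast}A_tB_t^{-1}]=\mathrm{tr}[(B_t-x)B_t^{-1}]=1-x\,\mathrm{tr}[B_t^{-1}]$ collapses the bracket to $x(\mathrm{tr}[B_t^{-1}])^2$, giving (\ref{dSNdt}). (One can also bypass It\^o's formula: differentiate $S^{\lambda,N}(t,x)$ in $t$ using that $C_t^N$ has the law of $\sqrt{t}$ times a Ginibre matrix, then apply Gaussian integration by parts to the resulting Wick-type expectation; this produces the same two terms and the same collapse.)

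Part~(\ref{cov.point}) then follows from (\ref{dSNdt}) and (\ref{dTNdx}): the latter, after differentiating under the expectation, gives $\partial S^{\lambda,N}/\partial x=\mathbb{E}[T^{\lambda,N}]$, so splitting $\mathbb{E}[(T^{\lambda,N})^2]$ in (\ref{dSNdt}) as $(\mathbb{E}[T^{\lambda,N}])^2+\mathrm{Cov}$ yields (\ref{derivCov}). I expect the main obstacle to be the second-order term in part~(\ref{timeDeriv.point}): one must correctly compute the second Fr\'echet derivative of $Z\mapsto\mathrm{tr}[\log((Z-\lambda)^{\ast}(Z-\lambda)+x)]$ and carefully carry out the orthonormal-basis contractions dictated by the Ginibre covariance, identifying exactly which of them vanish; after that, the algebraic identity $\mathrm{tr}[A_tB_t^{-1}A_t^{\ast}]=1-x\,\mathrm{tr}[B_t^{-1}]$ does the rest. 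The remaining ingredients --- smoothness of $F$, differentiation under the expectation, and the vanishing expectation of the stochastic integral --- are routine for fixed $N$ and $x>0$.
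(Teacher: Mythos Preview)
Your proposal is correct and follows essentially the same route as the paper. The paper phrases Point~\ref{timeDeriv.point} via the heat equation $\partial_t\mathbb{E}_t\{f\}=\tfrac14\Delta\mathbb{E}_t\{f\}$ and computes $\tfrac14\Delta=\sum_j\bar Z_jZ_j$ in a skew-Hermitian orthonormal basis, whereas you use the equivalent It\^o/generator formulation and contract the second Fr\'echet derivative against the complex Ginibre covariance; in both cases the trace-splitting identity (your $\sum_\alpha\mathrm{tr}[Pe_\alpha^\ast Qe_\alpha]\propto\mathrm{tr}[P]\,\mathrm{tr}[Q]$, the paper's $\sum_jX_jAX_j=-\mathrm{tr}(A)$) produces the two-term expression $\mathbb{E}\bigl[\mathrm{tr}[B_t^{-1}]-\mathrm{tr}[A_tB_t^{-1}A_t^\ast]\,\mathrm{tr}[B_t^{-1}]\bigr]$, and the same algebraic collapse $\mathrm{tr}[A_tB_t^{-1}A_t^\ast]=1-x\,\mathrm{tr}[B_t^{-1}]$ finishes the computation.
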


The key point to observe here is that in the formula (\ref{dSNdt}) for
$\partial S^{\lambda,N}/\partial t$, we have the \textit{expectation value of
the square} of a trace. On the other hand, if we computed $(\partial
S^{\lambda,N}/\partial\varepsilon)^{2}$ by taking the expectation value of
both sides of (\ref{dTNdx}) and squaring, we would have the \textit{square of
the expectation value} of a trace. Thus, there is no PDE for $S^{\lambda,N}%
$---we get an unavoidable covariance term on the right hand side of
(\ref{derivCov}).

On the other hand, the Ginibre Brownian motion $C_{t}^{N}$ exhibits a
\textbf{concentration phenomenon} for large $N.$ Specifically, let us consider
a family $\{Y^{N}\}$ of random variables of the form%
\[
Y^{N}=\mathrm{tr}[\text{word in }C_{t}^{N}\text{ and }(C_{t}^{N})^{\ast}].
\]
(Thus, for example, we might have $Y^{N}=\mathrm{tr}[C_{t}^{N}(C_{t}%
^{N})^{\ast}C_{t}^{N}(C_{t}^{N})^{\ast}].$) Then it is known that (1) the
large-$N$ limit of $\mathbb{E}\{Y^{N}\}$ exists, and (2) the variance of
$Y^{N}$ goes to zero. That is to say, when $N$ is large, $Y^{N}$ will be, with
high probability, close to its expectation value. It then follows that
$\mathbb{E}\{(Y^{N})^{2}\}$ will be close to $(\mathbb{E}\{Y^{N}\})^{2}.$
(This concentration phenomenon was established by Voiculescu in \cite{Voi2}
for the analogous case of the \textquotedblleft GUE Brownian
motion.\textquotedblright\ The case of the Ginibre Brownian motion is similar.)

Now, although the quantity%
\[
((C_{t}^{N}-\lambda)^{\ast}(C_{t}^{N}-\lambda)+\varepsilon)^{-1}%
\]
is not a word in $C_{t}^{N}$ and $(C_{t}^{N})^{\ast},$ it is expressible---at
least for large $\varepsilon$---as a power series in such words. It is
therefore reasonable to expect---this is not a proof!---that the variance of
$X^{N}$ will go to zero as $N$ goes to infinity, and the variance term\ in
(\ref{derivCov}) will vanish in the limit.

\subsubsection{Setting up the computation}

We view $M_{N}(\mathbb{C})$ as a real vector space of dimension $2N^{2}$ and
we use the following real-valued inner product $\left\langle \cdot
,\cdot\right\rangle _{N}$:%
\begin{equation}
\left\langle X,Y\right\rangle _{N}=N\operatorname{Re}(\mathrm{trace}(X^{\ast
}Y)).\label{mnInner}%
\end{equation}
The distribution of $C_{t}^{N}$ is the Gaussian measure of variance $t/2$ with
respect to this inner product%
\[
d\gamma_{t}(C)=d_{t}e^{-\left\langle C,C\right\rangle /t}~dC,
\]
where $d_{t}$ is a normalization constant and $dC$ is the Lebesgue measure on
$M_{N}(\mathbb{C}).$ This measure is a \textbf{heat kernel measure}. If we let
$\mathbb{E}_{t}$ denote the expectation value with respect to $\gamma_{t},$
then we have, for any \textquotedblleft nice\textquotedblright\ function,%
\begin{equation}
\frac{d}{dt}\mathbb{E}_{t}\{f\}=\frac{1}{4}\mathbb{E}_{t}\{\Delta
f\},\label{heatEq}%
\end{equation}
where $\Delta$ is the Laplacian on $M_{N}(\mathbb{C})$ with respect to the
inner product (\ref{mnInner}).

To compute more explicitly, we choose an orthonormal basis for $M_{N}%
(\mathbb{C})$ over $\mathbb{R}$ consisting of $X_{1},\ldots,X_{N^{2}}$ and
$Y_{1},\ldots,Y_{N^{2}},$ where $X_{1},\ldots,X_{N^{2}}$ are skew-Hermitian
and where $Y_{j}=iX_{j}.$ We then introduce the directional derivatives
$\tilde{X}_{j}$ and $\tilde{Y}_{j}$ defined by%
\[
(\tilde{X}_{j}f)(a)=\left.  \frac{d}{ds}f(a+sX_{j})\right\vert _{s=0}%
;\quad(\tilde{Y}_{j}f)(Z)=\left.  \frac{d}{ds}f(a+sY_{j})\right\vert _{s=0}.
\]
Then the Laplacian $\Delta$ is given by%
\[
\Delta=\sum_{j=1}^{N^{2}}\left(  (\tilde{X}_{j})^{2}+(\tilde{Y}_{j}%
)^{2}\right)  .
\]
We also introduce the corresponding complex derivatives, $Z_{j}$ and $\bar
{Z}_{j}$ given by
\begin{align*}
Z_{j}  &  =\frac{1}{2}(\tilde{X}_{j}-i\tilde{Y}_{j});\\
\bar{Z}_{j}  &  =\frac{1}{2}(\tilde{X}_{j}+i\tilde{Y}_{j}),
\end{align*}
which give%
\[
\frac{1}{4}\Delta=\sum_{j=1}^{N^{2}}\bar{Z}_{j}Z_{j}.
\]

We now let $C$ denote a matrix-valued variable ranging over $M_{N}%
(\mathbb{C}).$ We may easily compute the following basic identities:%
\begin{align}
Z_{j}(C) &  =X_{j};\quad Z_{j}(C^{\ast})=0;\nonumber\\
\bar{Z}_{j}(C) &  =0;\quad\bar{Z}_{j}(C^{\ast})=-X_{j}.\label{zjDerivs}%
\end{align}
(Keep in mind that $X_{j}$ is skew-Hermitian.) We will also need the following
elementary but crucial identity%
\begin{equation}
\sum_{j=1}^{N^{2}}X_{j}AX_{j}=-\mathrm{tr}(A),\label{magic}%
\end{equation}
where $\mathrm{tr}(\cdot)$ is the normalized trace, given by
\[
\mathrm{tr}(A)=\frac{1}{N}\mathrm{trace}(A).
\]
See, for example, Proposition 3.1 in \cite{DHKLargeN}. When applied to
function involving a normalized trace, this will produce second trace.

Finally, we need the following formulas for differentiating matrix-valued
functions of a real variable:%
\begin{align}
\frac{d}{ds}A(s)^{-1}  &  =-A(s)^{-1}\frac{dA}{ds}A(s)^{-1}\label{invDeriv}\\
\frac{d}{ds}\mathrm{tr}[\log A(s)]  &  =\mathrm{tr}\left[  A(s)^{-1}\frac
{dA}{ds}\right]  . \label{logDeriv}%
\end{align}
The first of these is standard and can be proved by differentiating the
identity $A(s)A(s)^{-1}=I.$ The second identity is Lemma 1.1 in \cite{Br}; it
is important to emphasize that this second identity does not hold as written
without the trace. One may derive (\ref{logDeriv}) by using an integral
formula for the derivative of the logarithm \textit{without} the trace (see,
for example, Equation (11.10) in \cite{Higham}) and then using the cyclic
invariance of the trace, at which point the integral can be computed explicitly.

\subsubsection{Proof of Proposition \ref{finiteN.prop}}

We continue to let $\mathbb{E}_{t}$ denote the expectation value with respect
to the measure $\gamma_{t},$ which is the distribution at time $t$ of the
Ginibre Brownian motion $C_{t}^{N},$ so that
\[
S^{\lambda,N}(t,\varepsilon)=\mathbb{E}_{t}\{\mathrm{tr}[\log((C-\lambda
)^{\ast}(C-\lambda)+\varepsilon)]\},
\]
where the variable $C$ ranges over $M_{N}(\mathbb{C}).$ We apply the
derivative $Z_{j}$ using (\ref{logDeriv}) and (\ref{zjDerivs}), giving
\[
Z_{j}S^{\lambda,N}(t,\varepsilon)=\mathbb{E}_{t}\{\mathrm{tr}[((C-\lambda
)^{\ast}(C-\lambda)+\varepsilon)^{-1}(C-\lambda)^{\ast}X_{j}]\}.
\]

We then apply the derivative $\bar{Z}_{j}$ using (\ref{invDeriv}) and
(\ref{zjDerivs}), giving%
\begin{align*}
&  \bar{Z}_{j}Z_{j}S^{\lambda,N}(t,\varepsilon)=-\mathbb{E}_{t}\{\mathrm{tr}%
[((C-\lambda)^{\ast}(C-\lambda)+\varepsilon)^{-1}X_{j}^{2}]\}\\
&  +\mathbb{E}_{t}\{\mathrm{tr}[((C-\lambda)^{\ast}(C-\lambda)+\varepsilon
)^{-1}X_{j}(C-\lambda)((C-\lambda)^{\ast}(C-\lambda)+\varepsilon
)^{-1}(C-\lambda)^{\ast}X_{j}]\}.
\end{align*}
We now sum on $j$ and apply the identity\ (\ref{magic}). After applying the
heat equation (\ref{heatEq}) with $\Delta=\sum_{j}\bar{Z}_{j}Z_{j},$ we obtain%
\begin{align}
&  \frac{d}{dt}S^{\lambda,N}(t,\varepsilon)\nonumber\\
&  =\sum_{j}\bar{Z}_{j}Z_{j}S^{\lambda,N}(t,\varepsilon)\nonumber\\
&  =\mathbb{E}_{t}\{\mathrm{tr}[((C-\lambda)^{\ast}(C-\lambda)+\varepsilon
)^{-1}]\}-\mathbb{E}_{t}\{\mathrm{tr}[((C-\lambda)^{\ast}(C-\lambda
)+\varepsilon)^{-1}]\times\nonumber\\
&  \mathrm{tr}[(C-\lambda)^{\ast}(C-\lambda)((C-\lambda)^{\ast}(C-\lambda
)+\varepsilon)^{-1}]\}.\label{dSdt}%
\end{align}

But then
\begin{align*}
&  (C-\lambda)^{\ast}(C-\lambda)((C-\lambda)^{\ast}(C-\lambda)+\varepsilon
)^{-1}\\
&  =((C-\lambda)^{\ast}(C-\lambda)+\varepsilon-\varepsilon)((C-\lambda)^{\ast
}(C-\lambda)+\varepsilon)^{-1}\\
&  =1-\varepsilon((C-\lambda)^{\ast}(C-\lambda)+\varepsilon)^{-1}.
\end{align*}
Thus, there is a cancellation between the two terms on the right-hand side of
(\ref{dSdt}), giving%
\[
\frac{\partial S^{\lambda,N}}{\partial t}=\varepsilon\mathbb{E}_{t}%
\{(\mathrm{tr}[((C-\lambda)^{\ast}(C-\lambda)+\varepsilon)^{-1}])^{2}\},
\]
as claimed in Point \ref{timeDeriv.point} of the proposition.

Meanwhile, we may use again the identity (\ref{logDeriv}) to compute%
\[
\frac{\partial}{\partial\varepsilon}\mathrm{tr}[\log((C_{t}^{N}-\lambda
)^{\ast}(C_{t}-\lambda)+\varepsilon)]
\]
to verify Point \ref{xDeriv.point} of the proposition. Point \ref{cov.point}
then follows by simple algebra.

\subsection{A derivation using free stochastic calculus}

\subsubsection{Ordinary stochastic calculus}

In this section, I\ will describe briefly how the PDE in Theorem
\ref{circularPDE.thm} can be derived rigorously, using the tools of free
stochastic calculus. The proof will follow Section 3 of \cite{HZ}.

We begin by recalling a little bit of ordinary stochastic calculus, for the
ordinary, real-valued Brownian motion. To avoid notational conflicts, we will
let $x_{t}$ denote Brownian motion in the real line. This is a random
continuous path satisfying the properties proposed by Einstein in 1905, namely
that for any $0=t_{0}<t_{1}<\cdots<t_{k},$ the increments%
\[
x_{t_{1}}-x_{t_{0}},~x_{t_{2}}-x_{t_{1}},\ldots,~x_{t_{k}}-x_{t_{k-1}}%
\]
should be independent normal random variables with mean zero and variance
$t_{j}-t_{j-1}.$ At a rigorous level, Brownian motion is described by the
Wiener measure on the space of continuous paths.

It is a famous result that, with probability one, the path $x_{t}$ is nowhere
differentiable. This property has not, however, deterred people from
developing a theory of \textquotedblleft stochastic calculus\textquotedblright%
\ in which one can take the \textquotedblleft differential\textquotedblright%
\ of $x_{t},$ denoted $dx_{t}.$ (Since $x_{t}$ is not differentiable, we
should not attempt to rewrite this differential as $\frac{dx_{t}}{dt}dt.$)
There is then a theory of \textquotedblleft stochastic
integrals,\textquotedblright\ in which one can compute, for example, integrals
of the form%
\[
\int_{a}^{b}f(x_{t})~dx_{t},
\]
where $f$ is some smooth function.

A key difference between ordinary and stochastic integration is that
$(dx_{t})^{2}$ is not negligible compared to $dt.$ To understand this
assertion, recall that the increments of Brownian motion have variance
$t_{j}-t_{j-1}$---and therefore standard deviation $\sqrt{t_{j}-t_{j-1}}.$
This means that in a short time interval $\Delta t,$ the Brownian motion
travels distance roughly $\Delta t.$ Thus, if $\Delta x_{t}=x_{t+\Delta
t}-x_{t},$ we may say that $(\Delta x_{t})^{2}\approx\Delta t.$ Thus, if $f$
is a smooth function, we may use a Taylor expansion to claim that%
\begin{align*}
f(x_{t+\Delta t})  &  \approx f(x_{t})+f^{\prime}(x_{t})\Delta x_{t}+\frac
{1}{2}f^{\prime\prime}(x_{t})(\Delta x_{t})^{2}\\
&  \approx f(x_{t})+f^{\prime}(x_{t})\Delta x_{t}+\frac{1}{2}f^{\prime\prime
}(x_{t})\Delta t.
\end{align*}
We may express the preceding discussion in the heuristically by saying%
\[
(dx_{t})^{2}=dt.
\]

Rigorously, this line of reasoning lies behind the famous It\^{o} formula,
which says that
\[
df(x_{t})=f^{\prime}(x_{t})~dx_{t}+\frac{1}{2}f^{\prime\prime}(x_{t})~dt.
\]
The formula means, more precisely, that (after integration)%
\[
f(x_{b})-f(x_{a})=\int_{a}^{b}f^{\prime}(x_{t})~dx_{t}+\frac{1}{2}\int_{a}%
^{b}f^{\prime\prime}(x_{t})~dt,
\]
where the first integral on the right-hand side is a stochastic integral and
the second is an ordinary Riemann integral.

If we take, for example, $f(x)=x^{2}/2,$ then we find that%
\[
\frac{1}{2}(x_{b}^{2}-x_{a}^{2})=\int_{a}^{b}x_{t}~dx_{t}+\frac{1}{2}(b-a)
\]
so that%
\[
\int_{a}^{b}x_{t}~dx_{t}=\frac{1}{2}(x_{b}^{2}-x_{a}^{2})-\frac{1}{2}(b-a).
\]
This formula differs from what we would get if $x_{t}$ were smooth by the
$b-a$ term on the right-hand side.

\subsubsection{Free stochastic calculus}

We now turn to the case of the circular Brownian motion $c_{t}.$ Since $c_{t}$
is a limit of ordinary Brownian motion in the space of $N\times N$ matrices,
we expect that $(dc_{t})^{2}$ will be non-negligible compared to $dt.$ The
rules are as follows; see \cite[Lemma 2.5, Lemma 4.3]{KempLargeN}. Suppose
$g_{t}$ and $h_{t}$ are processes \textquotedblleft adapted to $c_{t}%
$,\textquotedblright\ meaning that $g_{t}$ and $h_{t}$ belong to the von
Neumann algebra generated by the operators $c_{s}$ with $0<s<t.$ Then we have
\begin{align}
dc_{t}\,g_{t}\,dc_{t}^{\ast}  &  =dc_{t}^{\ast}\,g_{t}\,dc_{t}=\tau
(g_{t})\,dt\label{Ito1}\\
dc_{t}\,g_{t}\,dc_{t}  &  =dc_{t}^{\ast}\,g_{t}\,dc_{t}^{\ast}=0\label{Ito2}\\
\tau(g_{t}\,dc_{t}\,h_{t})  &  =\tau(g_{t}\,dc_{t}^{\ast}\,h_{t})=0.
\label{Ito4}%
\end{align}
In addition, we have the following It\^{o} product rule: if $a_{t}^{1}%
,\ldots,a_{t}^{n}$ are processes adapted to $c_{t}$, then
\begin{align}
d(a_{t}^{1}\cdots a_{t}^{n})  &  =\sum_{j=1}^{n}(a_{t}^{1}\cdots a_{t}%
^{j-1})\,da_{t}^{j}\,(a_{t}^{j+1}\cdots a_{t}^{n})\label{productRule1}\\
&  +\sum_{1\leq j<k\leq n}(a_{t}^{1}\cdots a_{t}^{j-1})\,da_{t}^{j}%
\,(a_{t}^{j+1}\cdots a_{t}^{k-1})\,da_{t}^{k}\,(a_{t}^{k+1}\cdots a_{t}^{n}).
\label{productRule2}%
\end{align}
Finally, the differential \textquotedblleft$d$\textquotedblright\ can be moved
inside the trace $\tau.$

Suppose, for example, we wish to compute $d\tau\lbrack c_{t}^{\ast}c_{t}].$ We
start by applying the product rule in (\ref{productRule1}) and
(\ref{productRule2}). But by (\ref{Ito4}), there will be no contribution from
the first line (\ref{productRule1}) in the product rule. We then use the
second line (\ref{productRule2}) of the product rule together with
(\ref{Ito1}) to obtain%
\[
d\tau\lbrack c_{t}^{\ast}c_{t}]=\tau\lbrack dc_{t}^{\ast}dc_{t}]=\tau
(1)~dt=dt.
\]
Thus,
\[
\frac{d}{dt}\tau\lbrack c_{t}^{\ast}c_{t}]=1.
\]
Since, also, $c_{0}=0,$ we find that $\tau\lbrack c_{t}^{\ast}c_{t}]=t.$

\subsubsection{The proof}

In the proof that follows, the It\^{o} formula (\ref{Ito1}) plays the same
role as the identity (\ref{magic}) plays in the heuristic argument in Section
\ref{finiteN.sec}. We begin with a lemma whose proof is an exercise in using
the rules of free stochastic calculus.

\begin{lemma}
\label{nthPower.lem}For each $\lambda\in\mathbb{C},$ let us use the the
notation%
\[
c_{t,\lambda}:=c_{t}-\lambda.
\]
Then for each positive integer $n,$ we have%
\[
\frac{d}{dt}\tau\lbrack(c_{t,\lambda}^{\ast}c_{t,\lambda})^{n}]=n\sum
_{l=0}^{n-1}\tau\lbrack(c_{t,\lambda}^{\ast}c_{t,\lambda})^{j}]\tau
\lbrack(c_{t,\lambda}c_{t,\lambda}^{\ast})^{n-j-1}]
\]

\end{lemma}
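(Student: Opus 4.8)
The plan is to expand $(c_{t,\lambda}^{\ast}c_{t,\lambda})^{n}$ as a product of $2n$ alternating factors and apply the free It\^{o} product rule (\ref{productRule1})--(\ref{productRule2}). Since $\lambda$ is constant we have $dc_{t,\lambda}=dc_{t}$ and $dc_{t,\lambda}^{\ast}=dc_{t}^{\ast}$, so writing $a^{1}a^{2}\cdots a^{2n}=(c_{t,\lambda}^{\ast}c_{t,\lambda})^{n}$---with $c_{t,\lambda}^{\ast}$ in every odd slot and $c_{t,\lambda}$ in every even slot---I would differentiate, move $d$ inside $\tau$, and take the trace. The single-differential terms coming from (\ref{productRule1}) each have the form $\tau(g_{t}\,dc_{t}^{(\ast)}\,h_{t})$ and hence vanish by (\ref{Ito4}), so only the double-differential sum (\ref{productRule2}) survives.

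Next I would evaluate the surviving terms. A pair of slots $j<k$ contributes a nonzero term only when one of $a^{j},a^{k}$ is $c_{t,\lambda}$ and the other is $c_{t,\lambda}^{\ast}$---that is, one slot odd and one even---since by (\ref{Ito2}) contracting $dc_{t}$ with $dc_{t}$ or $dc_{t}^{\ast}$ with $dc_{t}^{\ast}$ gives zero. For such a mixed pair, (\ref{Ito1}) together with cyclicity of $\tau$ collapses the term to $\tau(q)\,\tau(p)\,dt$, where $q$ is the contiguous block of factors strictly between slots $j$ and $k$ and $p$ is the product of the remaining factors read cyclically. The structural point is that $k-j$ is odd, so both blocks have even length, and deleting an even-length contiguous block from an alternating cyclic word again gives an alternating word; hence, up to a cyclic rotation invisible to $\tau$, both $q$ and $p$ are powers of $c_{t,\lambda}^{\ast}c_{t,\lambda}$, and one gets $\tau(q)=\tau[(c_{t,\lambda}^{\ast}c_{t,\lambda})^{a}]$ and $\tau(p)=\tau[(c_{t,\lambda}^{\ast}c_{t,\lambda})^{b}]$ with $a=(k-j-1)/2$ and $a+b=n-1$.

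It then remains to count multiplicities and tidy up. For fixed $a$, the mixed pairs with $k-j=2a+1$ split into those whose smaller slot is odd ($n-a$ of them) and those whose smaller slot is even ($n-a-1$ of them), giving $2n-2a-1$ in all; thus $\frac{d}{dt}\tau[(c_{t,\lambda}^{\ast}c_{t,\lambda})^{n}]=\sum_{a=0}^{n-1}(2n-2a-1)\,\tau[(c_{t,\lambda}^{\ast}c_{t,\lambda})^{a}]\,\tau[(c_{t,\lambda}^{\ast}c_{t,\lambda})^{n-1-a}]$. Since the summand is symmetric under $a\mapsto n-1-a$, averaging the coefficients $2n-2a-1$ and $2a+1$ replaces the coefficient by $n$; applying the elementary identity $\tau[(c_{t,\lambda}^{\ast}c_{t,\lambda})^{m}]=\tau[(c_{t,\lambda}c_{t,\lambda}^{\ast})^{m}]$ (again cyclicity of $\tau$) to the second factor then gives exactly the stated formula. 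I expect the second paragraph to be the real content---checking that every surviving contraction genuinely collapses to a product of two $c_{t,\lambda}^{\ast}c_{t,\lambda}$-moments---whereas there are no signs to track (everything is an honest product) and the final counting and symmetrization are short.
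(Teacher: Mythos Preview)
Your proof is correct and follows the same route as the paper's: apply the free It\^{o} product rule, kill the single-differential terms and the same-type double-differential terms via (\ref{Ito4}) and (\ref{Ito2}), and collapse each surviving mixed pair to a product of two moments via (\ref{Ito1}). The only cosmetic difference is in the final count---the paper cyclically rotates each term so that the $dc_t$ factor sits at the end and thereby reads off $n$ distinct terms each with multiplicity $n$, whereas you group by the gap $k-j$ to obtain coefficients $2n-2a-1$ and then symmetrize under $a\leftrightarrow n-1-a$ to recover the uniform coefficient $n$.
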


\begin{proof}
We first note that $dc_{t,\lambda}=dc_{t}$ and $dc_{t,\lambda}^{\ast}%
=dc_{t}^{\ast},$ since $\lambda$ is a constant. We then compute $d\tau
\lbrack(c_{t,\lambda}^{\ast}c_{t,\lambda})^{n}]$ by moving the $d$ inside the
trace and then applying the product rule in (\ref{productRule1}) and
(\ref{productRule2}). By (\ref{Ito4}), the terms arising from
(\ref{productRule1}) will not contribute. Furthermore, by (\ref{Ito2}), the
only terms from (\ref{productRule2}) that contribute are those where one $d$
goes on a factor of $c_{t,\lambda}$ and one goes on a factor of $c_{t,\lambda
}^{\ast}.$

By choosing all possible factors of $c_{t,\lambda}$ and all possible factors
of $c_{t,\lambda}^{\ast},$ we get $n^{2}$ terms. In each term, after putting
the $d$ inside the trace, we can cyclically permute the factors until, say,
the $dc_{t,\lambda}$ factor is at the end. There are then only $n$
\textit{distinct} terms that occur, each of which occurs $n$ times. By
(\ref{Ito1}), each distinct term is computed as%
\begin{align*}
&  \tau\lbrack(c_{t,\lambda}^{\ast}c_{t,\lambda})^{j}~dc_{t}^{\ast
}c_{t,\lambda}(c_{t,\lambda}^{\ast}c_{t,\lambda})^{n-j-2}c_{t,\lambda}^{\ast
}~dc_{t}]\\
&  =\tau\lbrack c_{t,\lambda}(c_{t,\lambda}^{\ast}c_{t,\lambda})^{n-j-2}%
c_{t,\lambda}^{\ast}]\tau\lbrack(c_{t,\lambda}^{\ast}c_{t,\lambda})^{j}]~dt\\
&  =\tau\lbrack(c_{t,\lambda}^{\ast}c_{t,\lambda})^{j}]\tau\lbrack c_{t}%
c_{t}^{\ast}(c_{t,\lambda}c_{t,\lambda}^{\ast})^{n-j-1}]~dt.
\end{align*}
Since each distinct term occurs $n$ times, we obtain%
\[
d\tau\lbrack(c_{t,\lambda}^{\ast}c_{t,\lambda})^{n}]=n\sum_{j=0}^{n-1}%
\tau\lbrack(c_{t,\lambda}^{\ast}c_{t,\lambda})^{j}]\tau\lbrack(c_{t,\lambda
}c_{t,\lambda}^{\ast})^{n-j-1}]~dt,
\]
which is equivalent to the claimed formula.
\end{proof}

We are now ready to give a rigorous argument for the PDE.

\begin{proof}
[Proof of Theorem \ref{circularPDE.thm}]We continue to use the notation
$c_{t,\lambda}:=c_{t}-\lambda.$ We first compute, using the operator version
of (\ref{logDeriv}), that%
\begin{align}
\frac{\partial S}{\partial\varepsilon} &  =\frac{\partial}{\partial
\varepsilon}\tau\lbrack\log(c_{t,\lambda}^{\ast}c_{t,\lambda}+\varepsilon
)]\nonumber\\
&  =\tau\lbrack(c_{t,\lambda}^{\ast}c_{t,\lambda}+\varepsilon)^{-1}%
].\label{dSdx}%
\end{align}

We note that the definition of $S$ in (\ref{Sdef}) actually makes sense for
all $\varepsilon\in\mathbb{C}$ with $\operatorname{Re}(\varepsilon)>0,$ using
the standard branch of the logarithm function. We note that for $\left\vert
\varepsilon\right\vert >\left\vert z\right\vert ,$ we have%
\begin{align}
\frac{1}{z+\varepsilon} &  =\frac{1}{\varepsilon\left(  1-\left(  -\frac
{z}{\varepsilon}\right)  \right)  }\nonumber\\
&  =\frac{1}{\varepsilon}\left[  1-\frac{z}{\varepsilon}+\frac{z^{2}%
}{\varepsilon^{2}}-\frac{z^{3}}{\varepsilon^{3}}+\cdots\right]  .\label{1zx}%
\end{align}
Integrating with respect to $z$ gives%
\[
\log(z+\varepsilon)=\log\varepsilon+\sum_{n=1}^{\infty}\frac{(-1)^{n-1}}%
{n}\left(  \frac{z}{\varepsilon}\right)  ^{n}.
\]
Thus, for $\left\vert \varepsilon\right\vert >\left\Vert c_{t}^{\ast}%
c_{t}\right\Vert ,$ we have%
\begin{equation}
\tau\lbrack\log(c_{t,\lambda}^{\ast}c_{t,\lambda}+\varepsilon)]=\log
\varepsilon+\,\sum_{n=1}^{\infty}\frac{(-1)^{n-1}}{n\varepsilon^{n}}%
\tau\lbrack(c_{t,\lambda}^{\ast}c_{t,\lambda})^{n}].\label{tauLog}%
\end{equation}

Assume for the moment that it is permissible to differentiate (\ref{tauLog})
term by term with respect to $t.$ Then by Lemma \ref{nthPower.lem}, we have%
\begin{equation}
\frac{\partial S}{\partial t}=\sum_{n=1}^{\infty}\frac{(-1)^{n-1}}%
{\varepsilon^{n}}\sum_{j=0}^{n-1}\tau\lbrack(c_{t,\lambda}^{\ast}c_{t,\lambda
})^{j}]\tau\lbrack(c_{t,\lambda}c_{t,\lambda}^{\ast})^{n-j-1}].\label{dsdt1}%
\end{equation}
Now, by \cite[Proposition 3.2.3]{BS1}, the map $t\mapsto c_{t}$ is continuous
in the operator norm topology; in particular, $\left\Vert c_{t}\right\Vert $
is a locally bounded function of $t.$ From this observation, it is easy to see
that the right-hand side of (\ref{dsdt1}) converges locally uniformly in $t.$
Thus, a standard result about interchange of limit and derivative (e.g.,
Theorem 7.17 in \cite{blueRudin}) shows that the term-by-term differentiation
is valid.

Now, in (\ref{dsdt1}), we let $k=j$ and $l=n-j-1,$ so that $n=k+l+1.$ Then $k$
and $l$ go from 0 to $\infty,$ and we get%
\[
\frac{\partial S}{\partial t}=\varepsilon\left(  \frac{1}{\varepsilon}%
\sum_{k=0}^{\infty}\frac{(-1)^{k}}{\varepsilon^{k}}\tau\lbrack(c_{t,\lambda
}^{\ast}c_{t,\lambda})^{k}]\right)  \left(  \frac{1}{\varepsilon}\sum
_{l=0}^{\infty}\frac{(-1)^{l}}{\varepsilon^{l}}\tau\lbrack(c_{t,\lambda
}c_{t,\lambda}^{\ast})^{l}]\right)  .
\]
(We may check that the power of $\varepsilon$ in the denominator is $k+l+1=n$
and that the power of $-1$ is $k+l=n-1.$) Thus, moving the sums inside the
traces and using (\ref{1zx}), we obtain that%
\begin{equation}
\frac{\partial S}{\partial t}=\varepsilon(\tau\lbrack(c_{t,\lambda}^{\ast
}c_{t,\lambda}+\varepsilon)^{-1}])^{2},\label{dsdt0}%
\end{equation}
which reduces to the claimed PDE\ for $S,$ by (\ref{dSdx}).

We have now established the claimed formula for $\partial S/\partial t$ for
$\varepsilon$ in the right half-plane, provided $\left\vert \varepsilon
\right\vert $ is sufficiently large, depending on $t$ and $\lambda.$ Since,
also, $S(0,\lambda,\varepsilon)=\log(\left\vert \lambda-1\right\vert
^{2}+\varepsilon),$ we have, for sufficiently large $\left\vert \varepsilon
\right\vert ,$%
\begin{equation}
S(t,\lambda,\varepsilon)=\log(\left\vert \lambda-1\right\vert ^{2}%
+\varepsilon)+\int_{0}^{t}\varepsilon\tau\lbrack(c_{s,\lambda}^{\ast
}c_{s,\lambda}+\varepsilon)^{-1}]\tau\lbrack(c_{s,\lambda}c_{s,\lambda}^{\ast
}+\varepsilon)^{-1}]~ds.\label{Sintegrated}%
\end{equation}
We now claim that both sides of (\ref{Sintegrated}) are well-defined,
holomorphic functions of $\varepsilon,$ for $\varepsilon$ in the right
half-plane. This claim is easily established from the standard power-series
representation of the inverse:%
\begin{align*}
(A+\varepsilon+h)^{-1} &  =(A+\varepsilon)^{-1}(1+h(A+\varepsilon)^{-1}%
)^{-1}\\
&  =(A+\varepsilon)^{-1}\sum_{n=0}^{\infty}(-1)^{n}h^{n}(A+\varepsilon)^{-n},
\end{align*}
and a similar power-series representation of the logarithm. Thus,
(\ref{Sintegrated}) actually holds for all $\varepsilon$ in the right
half-plane. Differentiating with respect to $t$ then establishes the desired
formula (\ref{dsdt0}) for $dS/dt$ for all $\varepsilon$ in the right half-plane.
\end{proof}

\section{Solving the equation\label{solving.sec}}

\subsection{The Hamilton--Jacobi method\label{hjMethod.sec}}

The PDE (\ref{additvePDE}) in Theorem \ref{circularPDE.thm} is a first-order,
nonlinear equation of Hamilton--Jacobi type. \textquotedblleft
Hamilton--Jacobi type\textquotedblright\ means that the right-hand side of the
equation involves only $\varepsilon$ and $\partial S/\partial\varepsilon,$ and
not $S$ itself. The reader may consult Section 3.3 of the book \cite{Evans} of
Evens for general information about equations of this type. In this
subsection, we describe the general version of this method. In the remainder
of this section, we will then apply the general method to the PDE
(\ref{additvePDE}).

The Hamilton--Jacobi method for analyzing solutions to equations of this type
is a generalization of the method of characteristics. In the method of
characteristics, one finds certain special curves along which the solution is
constant. For a general equation of Hamilton--Jacobi type, the method of
characteristics in not applicable. Nevertheless, we may hope to find certain
special curves along which the solution varies in a simple way, allowing us to
compute the solution along these curves in a more-or-less explicit way.

We now explain the representation formula for solutions of equations of
Hamilton--Jacobi type. A self-contained proof of the following result is given
as the proof of Proposition 6.3 in \cite{DHKBrown}.

\begin{proposition}
\label{HJgeneral.prop}Fix a function $H(\mathbf{x},\mathbf{p})$ defined for
$\mathbf{x}$ in an open set $U\subset\mathbb{R}^{n}$ and $\mathbf{p}$ in
$\mathbb{R}^{n}.$ Consider a smooth function $S(t,\mathbf{x})$ on
$[0,\infty)\times U$ satisfying%
\begin{equation}
\frac{\partial S}{\partial t}=-H(\mathbf{x},\nabla_{\mathbf{x}}S)
\label{HJeqn}%
\end{equation}
for $\mathbf{x}\in U$ and $t>0.$ Now suppose $(\mathbf{x}(t),\mathbf{p}(t))$
is curve in $U\times\mathbb{R}^{n}$ satisfying Hamilton's equations:%
\[
\frac{dx_{j}}{dt}=\frac{\partial H}{\partial p_{j}}(\mathbf{x}(t),\mathbf{p}%
(t));\quad\frac{dp_{j}}{dt}=-\frac{\partial H}{\partial x_{j}}(\mathbf{x}%
(t),\mathbf{p}(t))
\]
with initial conditions%
\begin{equation}
\mathbf{x}(0)=\mathbf{x}_{0};\quad\mathbf{p}(0)=\mathbf{p}_{0}:=(\nabla
_{\mathbf{x}}S)(0,\mathbf{x}_{0}). \label{HJinit}%
\end{equation}
Then we have%
\begin{equation}
S(t,\mathbf{x}(t))=S(0,\mathbf{x}_{0})-H(\mathbf{x}_{0},\mathbf{p}_{0}%
)~t+\int_{0}^{t}\mathbf{p}(s)\cdot\frac{d\mathbf{x}}{ds}~ds
\label{HJformulaGen}%
\end{equation}
and%
\begin{equation}
(\nabla_{\mathbf{x}}S)(t,\mathbf{x}(t))=\mathbf{p}(t). \label{derivFormulaGen}%
\end{equation}

\end{proposition}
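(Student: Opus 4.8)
The plan is to verify the Hamilton--Jacobi representation formula \eqref{HJformulaGen} and \eqref{derivFormulaGen} by direct differentiation along the characteristic curve $(\mathbf{x}(t),\mathbf{p}(t))$, using the PDE \eqref{HJeqn} together with Hamilton's equations. The only genuinely nontrivial point is \eqref{derivFormulaGen}, which must be established \emph{first}, because the proof of \eqref{HJformulaGen} is then a one-line computation; so I would prove \eqref{derivFormulaGen} as the main lemma and deduce the rest.

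\textbf{Step 1: the derivative formula.} Define $q_j(t) := (\partial S/\partial x_j)(t,\mathbf{x}(t))$, so that $\mathbf{q}(0) = (\nabla_{\mathbf{x}}S)(0,\mathbf{x}_0) = \mathbf{p}_0 = \mathbf{p}(0)$ by \eqref{HJinit}. The goal is to show $\mathbf{q}(t) = \mathbf{p}(t)$ for all $t$, which by uniqueness of ODE solutions will follow once I check that $\mathbf{q}(t)$ satisfies the same differential equation as $\mathbf{p}(t)$, namely $dq_j/dt = -\,\partial H/\partial x_j(\mathbf{x}(t),\mathbf{q}(t))$. Differentiating $q_j$ by the chain rule gives
\begin{equation}
\frac{dq_j}{dt} = \frac{\partial^2 S}{\partial t\,\partial x_j}(t,\mathbf{x}(t)) + \sum_{k=1}^n \frac{\partial^2 S}{\partial x_k\,\partial x_j}(t,\mathbf{x}(t))\,\frac{dx_k}{dt}.
\end{equation}
For the first term I differentiate the PDE \eqref{HJeqn} with respect to $x_j$, obtaining $\partial^2 S/\partial t\,\partial x_j = -\,\partial H/\partial x_j - \sum_k (\partial H/\partial p_k)(\partial^2 S/\partial x_k\,\partial x_j)$, all evaluated appropriately. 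Substituting Hamilton's equation $dx_k/dt = \partial H/\partial p_k(\mathbf{x}(t),\mathbf{p}(t))$ into the second term and \emph{provisionally} assuming $\mathbf{p}(t)=\mathbf{q}(t)$ at the point of evaluation, the two Hessian sums cancel exactly, leaving $dq_j/dt = -\,\partial H/\partial x_j(\mathbf{x}(t),\mathbf{q}(t))$. To make this rigorous without circularity, I would instead argue by Gr\"onwall: set $e(t) = \mathbf{p}(t)-\mathbf{q}(t)$, note $e(0)=0$, and show $\|e(t)\|$ satisfies $\|e'(t)\| \le C(t)\|e(t)\|$ on any compact $t$-interval, using local Lipschitz bounds on $\partial H/\partial p$ and the boundedness of $D^2 S$ along the (compact) curve segment; this forces $e\equiv 0$. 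This Gr\"onwall/uniqueness argument is the main obstacle, since it is the only place the smoothness hypotheses on $S$ and the regularity of $H$ are really used.

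\textbf{Step 2: the value formula.} With \eqref{derivFormulaGen} in hand, I compute $\frac{d}{dt}S(t,\mathbf{x}(t))$ by the chain rule:
\begin{equation}
\frac{d}{dt}S(t,\mathbf{x}(t)) = \frac{\partial S}{\partial t}(t,\mathbf{x}(t)) + (\nabla_{\mathbf{x}}S)(t,\mathbf{x}(t))\cdot \frac{d\mathbf{x}}{dt} = -H(\mathbf{x}(t),\mathbf{p}(t)) + \mathbf{p}(t)\cdot\frac{d\mathbf{x}}{dt},
\end{equation}
where I used \eqref{HJeqn} for the first term and \eqref{derivFormulaGen} for the second. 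Integrating from $0$ to $t$ gives
\begin{equation}
S(t,\mathbf{x}(t)) = S(0,\mathbf{x}_0) - \int_0^t H(\mathbf{x}(s),\mathbf{p}(s))\,ds + \int_0^t \mathbf{p}(s)\cdot\frac{d\mathbf{x}}{ds}\,ds.
\end{equation}
It remains to identify $\int_0^t H(\mathbf{x}(s),\mathbf{p}(s))\,ds$ with $H(\mathbf{x}_0,\mathbf{p}_0)\,t$, i.e.\ to show that $H$ is conserved along the Hamiltonian flow. This is the standard fact: $\frac{d}{ds}H(\mathbf{x}(s),\mathbf{p}(s)) = \sum_j\big(\frac{\partial H}{\partial x_j}\frac{dx_j}{ds} + \frac{\partial H}{\partial p_j}\frac{dp_j}{ds}\big) = \sum_j\big(\frac{\partial H}{\partial x_j}\frac{\partial H}{\partial p_j} - \frac{\partial H}{\partial p_j}\frac{\partial H}{\partial x_j}\big) = 0$ by Hamilton's equations. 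Hence $H(\mathbf{x}(s),\mathbf{p}(s)) \equiv H(\mathbf{x}_0,\mathbf{p}_0)$, and substituting this into the displayed identity yields \eqref{HJformulaGen}, completing the proof. (One could alternatively appeal directly to the cited self-contained proof of Proposition 6.3 in \cite{DHKBrown}, but the argument above is short enough to include in full.)
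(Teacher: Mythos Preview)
Your argument is correct and follows the standard method-of-characteristics derivation for Hamilton--Jacobi equations. Note, however, that the paper does not actually supply a proof of this proposition: it simply states the result and refers the reader to Proposition~6.3 of \cite{DHKBrown} for a self-contained proof. So there is nothing in the present paper to compare against beyond that citation; what you have written is presumably close to what appears there (and to the treatment in Evans, Section~3.3).

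One stylistic remark on Step~1. Your Gr\"onwall argument is valid, but the circularity you flag can be avoided more cleanly as follows. Rather than starting from the Hamiltonian trajectory $(\mathbf{x}(t),\mathbf{p}(t))$ and comparing $\mathbf{p}$ to $\mathbf{q}=\nabla S(t,\mathbf{x}(t))$, first define a curve $\tilde{\mathbf{x}}(t)$ by the self-contained ODE
\[
\frac{d\tilde{x}_k}{dt}=\frac{\partial H}{\partial p_k}\bigl(\tilde{\mathbf{x}}(t),\nabla S(t,\tilde{\mathbf{x}}(t))\bigr),\qquad \tilde{\mathbf{x}}(0)=\mathbf{x}_0,
\]
and set $\tilde{\mathbf{q}}(t)=\nabla S(t,\tilde{\mathbf{x}}(t))$. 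Your chain-rule computation then shows \emph{exactly} (with no mismatch in the argument of $\partial H/\partial p_k$) that $(\tilde{\mathbf{x}},\tilde{\mathbf{q}})$ solves Hamilton's equations with initial data $(\mathbf{x}_0,\mathbf{p}_0)$. Uniqueness for the Hamiltonian system then forces $(\tilde{\mathbf{x}},\tilde{\mathbf{q}})=(\mathbf{x},\mathbf{p})$, which is \eqref{derivFormulaGen}. This is logically equivalent to your Gr\"onwall route (both ultimately invoke ODE uniqueness and hence need $H$ to have, say, locally Lipschitz first derivatives---an assumption implicit in the statement since Hamilton's equations must be well posed), but it sidesteps the ``provisional'' step entirely. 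Step~2 is fine as written.
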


We emphasize that we are not using the Hamilton--Jacobi formula to
\textit{construct} a solution to the equation (\ref{HJeqn}); rather, we are
using the method to \textit{analyze} a solution that is assumed ahead of time
to exist. Suppose we want to use the method to compute (as explicitly as
possible), the value of $S(t,\mathbf{x})$ for some fixed $\mathbf{x}.$ We then
need to try to choose the initial position $\mathbf{x}_{0}$ in (\ref{HJinit}%
)---which determines the initial momentum $\mathbf{p}_{0}=(\nabla_{\mathbf{x}%
}S)(0,\mathbf{x}_{0})$---so that $\mathbf{x}(t)=\mathbf{x}.$ We then use
(\ref{HJformulaGen})\ to get an in-principle formula for $S(t,\mathbf{x}%
(t))=S(t,\mathbf{x}).$

\subsection{Solving the equations}

The equation for $S^{\lambda}$ in Theorem \ref{circularPDE.thm} is of
Hamilton--Jacobi form with $n=1$, with Hamiltonian given by%
\begin{equation}
H(\varepsilon,p)=-\varepsilon p^{2}.\label{Hadditive}%
\end{equation}
Since $S^{\lambda}(t,\varepsilon)$ is only defined for $\varepsilon>0,$ we
take open set $U$ in Proposition \ref{HJgeneral.prop} to be $(0,\infty).$ That
is to say, the Hamilton--Jacobi formula (\ref{HJformulaGen}) is only valid if
the curve $\varepsilon(s)$ remains positive for $0\leq s\leq t.$

Hamilton's equations for this Hamiltonian then take the explicit form%
\begin{align}
\frac{d\varepsilon}{dt} &  =\frac{\partial H}{\partial p}=-2\varepsilon
p\label{Ham1}\\
\frac{dp}{dt} &  =-\frac{\partial H}{\partial\varepsilon}=p^{2}.\label{Ham2}%
\end{align}
Following the general method, we take an arbitrary initial position
$\varepsilon_{0}$, with the initial momentum $p_{0}$ given by
\begin{align}
p_{0} &  =\left.  \frac{\partial}{\partial\varepsilon}\log(\left\vert
\lambda\right\vert ^{2}+\varepsilon)\right\vert _{\varepsilon=\varepsilon_{0}%
}\nonumber\\
&  =\frac{1}{\left\vert \lambda\right\vert ^{2}+\varepsilon_{0}}%
.\label{p0circular}%
\end{align}

\begin{theorem}
\label{odeSoln.thm}For any $\varepsilon_{0}>0,$ the solution $(\varepsilon
(t),p(t))$ to (\ref{Ham1}) and (\ref{Ham2}) with initial momentum
$p_{0}=1/(\left\vert \lambda\right\vert ^{2}+\varepsilon_{0})$ exists for
$0\leq t<\left\vert \lambda\right\vert ^{2}+\varepsilon_{0}.$ On this time
interval, we have%
\begin{equation}
\varepsilon(t)=\varepsilon_{0}\left(  1-\frac{t}{\left\vert \lambda\right\vert
^{2}+\varepsilon_{0}}\right)  ^{2}.\label{xOfT}%
\end{equation}
The general Hamilton--Jacobi formula (\ref{HJformulaGen}) then takes the form%
\begin{align}
&  S^{\lambda}\left(  t,\varepsilon_{0}\left(  1-\frac{t}{\left\vert
\lambda\right\vert ^{2}+\varepsilon_{0}}\right)  ^{2}\right)  \nonumber\\
&  =\log(\left\vert \lambda\right\vert ^{2}+\varepsilon_{0})-\frac
{\varepsilon_{0}t}{(\left\vert \lambda\right\vert ^{2}+\varepsilon_{0})^{2}%
},\quad0\leq t<\left\vert \lambda\right\vert ^{2}+\varepsilon_{0}%
.\label{HJcircular}%
\end{align}

\end{theorem}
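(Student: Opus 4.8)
The plan is to integrate Hamilton's equations (\ref{Ham1})--(\ref{Ham2}) explicitly, starting from the momentum equation, which decouples from the position variable.

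First I would solve $dp/dt = p^{2}$ with $p(0) = p_{0} = 1/(|\lambda|^{2}+x_{0}) > 0$. The substitution $q = 1/p$ turns this into $dq/dt = -1$, so $q(t) = (|\lambda|^{2}+x_{0}) - t$ and hence $p(t) = 1/(|\lambda|^{2}+x_{0}-t)$. This is smooth and positive precisely for $0 \le t < |\lambda|^{2}+x_{0}$, and it blows up as $t$ approaches $|\lambda|^{2}+x_{0}$; this is what will pin down the maximal interval of existence.

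Next I would substitute $p(t)$ into the position equation, which becomes the linear equation $dx/dt = -2x/(|\lambda|^{2}+x_{0}-t)$. Separating variables and integrating from $0$ gives $\log x(t) - \log x_{0} = 2\log(|\lambda|^{2}+x_{0}-t) - 2\log(|\lambda|^{2}+x_{0})$, which is exactly (\ref{xOfT}). In particular $x(t) > 0$ on all of $[0,|\lambda|^{2}+x_{0})$, so the curve $(x(t),p(t))$ never leaves the domain $U = (0,\infty)$ appearing in Proposition \ref{HJgeneral.prop}; one also checks by direct differentiation that it solves (\ref{Ham1})--(\ref{Ham2}) with the prescribed initial data.

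Finally I would evaluate the Hamilton--Jacobi representation (\ref{HJformulaGen}) term by term. The first term is $S^{\lambda}(0,x_{0}) = \log(|\lambda|^{2}+x_{0})$ by the initial condition in Theorem \ref{circularPDE.thm}; the second is $-H(x_{0},p_{0})\,t = x_{0}p_{0}^{2}\,t = x_{0}t/(|\lambda|^{2}+x_{0})^{2}$. For the integral, the key observation is that along the trajectory the product $x(s)p(s)^{2} = x_{0}/(|\lambda|^{2}+x_{0})^{2}$ is \emph{constant} in $s$, since the factors $(|\lambda|^{2}+x_{0}-s)^{2}$ cancel; hence $p(s)\,\dot{x}(s) = -2x(s)p(s)^{2}$ is constant in $s$ and $\int_{0}^{t} p(s)\,\dot{x}(s)\,ds = -2x_{0}t/(|\lambda|^{2}+x_{0})^{2}$. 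Adding the three contributions gives $\log(|\lambda|^{2}+x_{0}) - x_{0}t/(|\lambda|^{2}+x_{0})^{2}$, which is (\ref{HJcircular}). I expect the only genuinely delicate point to be bookkeeping the interval of existence: one must confirm that the breakdown at $t = |\lambda|^{2}+x_{0}$ is caused by the blow-up of $p$ rather than by $x(s)$ reaching the boundary of $(0,\infty)$ at some earlier time, which is exactly what is needed to legitimately invoke Proposition \ref{HJgeneral.prop} on the whole stated interval.
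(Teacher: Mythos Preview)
Your proposal is correct and follows essentially the same route as the paper: solve the decoupled momentum equation first, then the linear equation for $x$, identify the blow-up time $1/p_{0}=|\lambda|^{2}+x_{0}$, and evaluate the Hamilton--Jacobi formula using the constancy of $p(s)\dot{x}(s)=-2x(s)p(s)^{2}$. The only cosmetic difference is that the paper justifies this constancy by noting $-x(s)p(s)^{2}=H(s)$ is conserved along Hamiltonian flow, whereas you verify it by direct cancellation of the $(|\lambda|^{2}+x_{0}-s)^{2}$ factors; either way the integral collapses and the three contributions combine to give (\ref{HJcircular}).
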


\begin{proof}
Since the equation (\ref{Ham2}) for $dp/dt$ does not involve $\varepsilon(t),$
we may easily solve it for $p(t)$ as
\[
p(t)=\frac{p_{0}}{1-p_{0}t}.
\]
We may then plug the formula for $p(t)$ into the equation (\ref{Ham1}) for
$d\varepsilon/dt,$ giving%
\[
\frac{d\varepsilon}{dt}=-2\varepsilon\frac{p_{0}}{1-p_{0}t}%
\]
so that%
\[
\frac{1}{\varepsilon}d\varepsilon=-2\frac{p_{0}}{1-p_{0}t}~dt.
\]
Thus,%
\[
\log\varepsilon=2\log(p_{0}t-1)+c_{1}%
\]
so that%
\[
\varepsilon=c_{2}(1-p_{0}t)^{2}.
\]
Plugging in $t=0$ gives $c_{2}=\varepsilon_{0}.$ Recalling the expression
(\ref{p0circular}) for $p_{0}$ gives the claimed formula for $\varepsilon(t).$

Assuming $\varepsilon_{0}>0,$ the solution to the system (\ref{Ham1}%
)--(\ref{Ham2}) continues to exist with $\varepsilon(t)>0$ until $p(t)$ blows
up, which occurs at time $t=1/p_{0}=\left\vert \lambda\right\vert
^{2}+\varepsilon_{0}.$

Finally, we work out the general Hamilton--Jacobi formula (\ref{HJformulaGen})
in the case at hand. We note from (\ref{Hadditive}) and (\ref{Ham1}) that
$p(s)\frac{d\varepsilon}{ds}=-2\varepsilon(s)p(s)^{2}=2H(s).$ Since the
Hamiltonian is always a conserved quantity in Hamilton's equations, we find
that
\[
p(s)\frac{d\varepsilon}{ds}=2H(0)=-2\varepsilon_{0}p_{0}^{2}.
\]
Thus, (\ref{HJformulaGen}) reduces to
\begin{align*}
S^{\lambda}(t,\varepsilon(t)) &  =S(0,\varepsilon_{0})+H(0)t\\
&  =\log(\left\vert \lambda\right\vert ^{2}+\varepsilon_{0})-\varepsilon
_{0}p_{0}^{2}t.
\end{align*}
Using the formula (\ref{p0circular}) for $p_{0}$ gives the claimed formula
(\ref{HJcircular}).
\end{proof}

\section{Letting $\varepsilon$ tend to zero\label{xToZero.sec}}

Recall that the Brown measure is obtained by first evaluating
\[
s_{t}(\lambda):=\lim_{\varepsilon\rightarrow0^{+}}S^{\lambda}(t,\varepsilon)
\]
and then taking $1/(4\pi)$ times the Laplacian (in the distribution sense) of
$s_{t}(\lambda).$ We record the result here and will derive it in the
remainder of this section.

\begin{theorem}
We have%
\begin{equation}
s_{t}(\lambda)=\left\{
\begin{array}
[c]{cc}%
\log(\left\vert \lambda\right\vert ^{2}) & \left\vert \lambda\right\vert
\geq\sqrt{t}\\
\log t-1+\frac{\left\vert \lambda\right\vert ^{2}}{t} & \left\vert
\lambda\right\vert <\sqrt{t}%
\end{array}
\right.  . \label{stFormula}%
\end{equation}
The Brown measure is then absolutely continuous with respect to the Lebesgue
measure, with density $W_{t}(\lambda)$ given by%
\begin{equation}
W_{t}(\lambda)=\left\{
\begin{array}
[c]{cc}%
0 & \left\vert \lambda\right\vert \geq\sqrt{t}\\
\frac{1}{\pi t} & \left\vert \lambda\right\vert <\sqrt{t}%
\end{array}
\right.  . \label{WtFormula}%
\end{equation}

\end{theorem}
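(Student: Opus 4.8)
The plan is to read off the boundary value $s_t(\lambda)=\lim_{x\to 0^+}S^\lambda(t,x)$ directly from the Hamilton--Jacobi representation of Theorem~\ref{odeSoln.thm} by letting the initial position $x_0$ run to the appropriate endpoint of its admissible range, and then to take $\tfrac1{4\pi}$ times the distributional Laplacian of the resulting explicit function of $\lambda$.

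Recall from Theorem~\ref{odeSoln.thm} that for every $x_0>0$ satisfying $t<|\lambda|^2+x_0$ one has
\[
S^\lambda\!\left(t,\;x_0\Bigl(1-\tfrac{t}{|\lambda|^2+x_0}\Bigr)^{\!2}\right)=\log(|\lambda|^2+x_0)-\frac{x_0t}{(|\lambda|^2+x_0)^2},
\]
the constraint $t<|\lambda|^2+x_0$ being exactly the requirement that the characteristic $x(s)$ stays in $U=(0,\infty)$ for $0\le s\le t$, so that Proposition~\ref{HJgeneral.prop} applies; thus the admissible initial positions are $x_0>\max(0,\,t-|\lambda|^2)$. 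The key point is the resulting dichotomy: when $|\lambda|\ge\sqrt t$ the admissible range is all of $(0,\infty)$, whereas when $|\lambda|<\sqrt t$ the condition $x_0>t-|\lambda|^2$ forbids $x_0$ from being taken arbitrarily small --- a characteristic starting at a very small $x_0$ reaches $x=0$ strictly before time $t$ and hence cannot be used to evaluate $S^\lambda(t,x)$ at small $x$. In both cases the spatial endpoint $x(t)=x_0\bigl(1-t/(|\lambda|^2+x_0)\bigr)^2$ tends to $0^+$ as $x_0$ tends to the left endpoint of its admissible interval, and does so strictly monotonically there; hence for each sufficiently small $x>0$ the equation $x(t)=x$ has an admissible solution $x_0$ lying near that endpoint, along whose characteristic the displayed formula computes $S^\lambda(t,x)$, and letting $x\to 0^+$ we may let $x_0$ tend to that endpoint. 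Substituting $x_0\to 0$ (case $|\lambda|\ge\sqrt t$) gives $s_t(\lambda)=\log(|\lambda|^2)$, while substituting $x_0\to t-|\lambda|^2$, so that $|\lambda|^2+x_0\to t$ (case $|\lambda|<\sqrt t$), gives $s_t(\lambda)=\log t-(t-|\lambda|^2)/t=\log t-1+|\lambda|^2/t$; this is (\ref{stFormula}). At $|\lambda|=\sqrt t$ both branches equal $\log t$, so $s_t$ is continuous across the circle $|\lambda|=\sqrt t$.

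It then remains to compute $\tfrac{1}{4\pi}\Delta s_t$ in the distribution sense. On $\{|\lambda|>\sqrt t\}$ we have $s_t(\lambda)=2\log|\lambda|$, which is harmonic there since the origin lies outside this region, so $\Delta s_t=0$. On the disk $\{|\lambda|<\sqrt t\}$ we have $s_t(\lambda)=\log t-1+|\lambda|^2/t$, and $\Delta|\lambda|^2=4$, so $\Delta s_t=4/t$ and the density is $\tfrac{1}{4\pi}\cdot\tfrac{4}{t}=\tfrac{1}{\pi t}$. Finally one checks that $\Delta s_t$ has no singular component on $\{|\lambda|=\sqrt t\}$: the radial derivative from inside is $\partial_r s_t=2r/t\to 2/\sqrt t$ as $r\uparrow\sqrt t$, and from outside $\partial_r s_t=2/r\to 2/\sqrt t$ as $r\downarrow\sqrt t$, so $s_t$ is in fact $C^1$ across the circle and its distributional Laplacian carries no single-layer term there. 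Hence the Brown measure is absolutely continuous with the density $W_t$ in (\ref{WtFormula}); as a consistency check $\int_{|\lambda|<\sqrt t}\tfrac{1}{\pi t}\,dA=1$, so it is a probability measure --- indeed the uniform measure on the disk of radius $\sqrt t$, in agreement with Proposition~\ref{BrownCircular.prop}.

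The step I expect to be the main obstacle is pinning down the correct limiting characteristic in the regime $|\lambda|<\sqrt t$: one must recognize that the phase transition at $|\lambda|=\sqrt t$ is a purely characteristic-geometric effect, caused by characteristics with small $x_0$ leaving the domain $U=(0,\infty)$ before time $t$, so that the relevant limiting initial condition is $x_0=t-|\lambda|^2$ rather than $x_0=0$. Turning this into a rigorous argument requires a careful treatment of the admissibility constraint together with the local monotonicity of $x_0\mapsto x(t)$ near the left endpoint, so that the limiting characteristic is uniquely determined; once that is settled, the remaining steps are elementary calculus.
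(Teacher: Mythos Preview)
Your proposal is correct and follows essentially the same approach as the paper: split into the cases $|\lambda|\ge\sqrt t$ and $|\lambda|<\sqrt t$ according to whether the small-$x_0$ lifetime exceeds $t$, send $x_0$ to $0$ in the first case and to $t-|\lambda|^2$ in the second, then check $C^1$ matching of $s_t$ across $|\lambda|=\sqrt t$ (via the radial derivatives $2/r$ and $2r/t$) to rule out a singular boundary contribution before taking the ordinary Laplacian on each piece. Your explicit remark that $x_0\mapsto x(t)$ is monotone near the left endpoint, so that every small $x>0$ is hit by an admissible characteristic, is a point the paper leaves implicit.
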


That is to say, the Brown measure is the uniform probability measure on the
disk of radius $\sqrt{t}$ centered at the origin. The functions $s_{t}%
(\lambda)$ and $W_{t}(\lambda)$ are plotted for $t=1$ in Figure
\ref{splot3d.fig}. On the left-hand side of the figure, the dashed line
indicates the boundary of the unit disk.%

\begin{figure}[ptb]%
\centering
\includegraphics[
height=2.5356in,
width=4.8274in
]%
{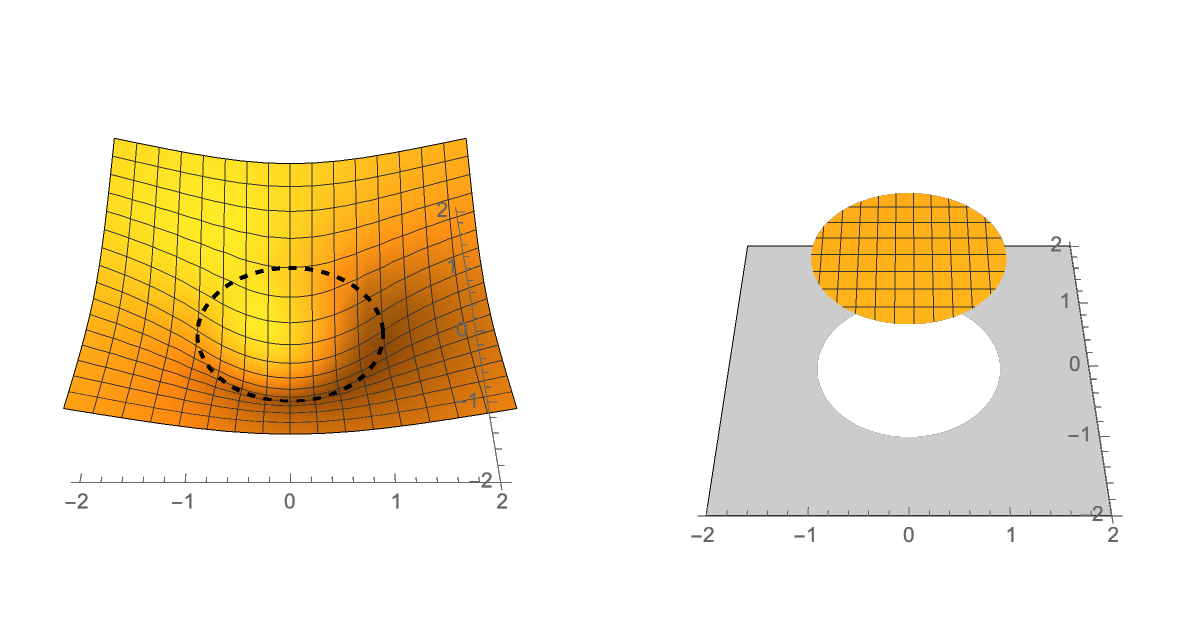}%
\caption{Plot of $s_{t}(\lambda):=S^{\lambda}(t,0^{+})$ (left) and $\frac
{1}{4\pi}\Delta s_{t}(\lambda)$ (right) for $t=1$}%
\label{splot3d.fig}%
\end{figure}

\subsection{Letting $\varepsilon$ tend to zero: outside the disk}

Our goal is to compute $s_{t}(\lambda):=\lim_{\varepsilon\rightarrow0^{+}%
}S^{\lambda}(t,\varepsilon)$. Thus, in the Hamilton--Jacobi formalism, we want
to try to choose $\varepsilon_{0}$ so that the quantity
\begin{equation}
\varepsilon(t)=\varepsilon_{0}\left(  1-\frac{t}{\left\vert \lambda\right\vert
^{2}+\varepsilon_{0}}\right)  ^{2}\label{xOfT2}%
\end{equation}
will be very close to zero. Since there is a factor of $\varepsilon_{0}$ on
the right-hand side of the above formula, an obvious strategy is to take
$\varepsilon_{0}$ itself very close to zero. There is, however, a potential
difficulty with this strategy: If $\varepsilon_{0}$ is small, the lifetime of
the solution may be smaller than the time $t$ we are interested in. To see
when the strategy works, we take the formula for the lifetime of the
solution---namely $\left\vert \lambda\right\vert ^{2}+\varepsilon_{0}$---and
take the limit as $\varepsilon_{0}$ tends to zero.

\begin{definition}
For each $\lambda\in\mathbb{C},$ we define $T(\lambda)$ to be the lifetime of
solutions to the system (\ref{Ham1})--(\ref{Ham2}), in the limit as
$\varepsilon_{0}$ approaches zero. Thus, explicitly,%
\begin{align*}
T(\lambda) &  =\lim_{\varepsilon_{0}\rightarrow0^{+}}(\left\vert
\lambda\right\vert ^{2}+\varepsilon_{0})\\
&  =\left\vert \lambda\right\vert ^{2}.
\end{align*}

\end{definition}

Thus, if the time $t$ we are interested in is larger than $T(\lambda
)=\left\vert \lambda\right\vert ^{2},$ our simple strategy of taking
$\varepsilon_{0}\approx0$ will not work. After all, if $t>T(\lambda)$ and
$\varepsilon_{0}\approx0,$ then the lifetime of the path is less than $t$ and
the Hamilton--Jacobi formula (\ref{HJcircular}) is not applicable. On the
other hand, if the time $t$ we are interested in is at most $T(\lambda
)=\left\vert \lambda\right\vert ^{2},$ the simple strategy does work. Figure
\ref{outside.fig} illustrates the situation.

\begin{conclusion}
The simple strategy of letting $\varepsilon_{0}$ approach zero works precisely
when $t\leq T(\lambda)=\left\vert \lambda\right\vert ^{2}.$ Equivalently, the
simple strategy works when $\left\vert \lambda\right\vert \geq t,$ that is,
when $\lambda$ is outside the open disk of radius $\sqrt{t}$ centered at the origin.
\end{conclusion}

%

\begin{figure}[ptb]%
\centering
\includegraphics[
height=2.0289in,
width=3.7775in
]%
{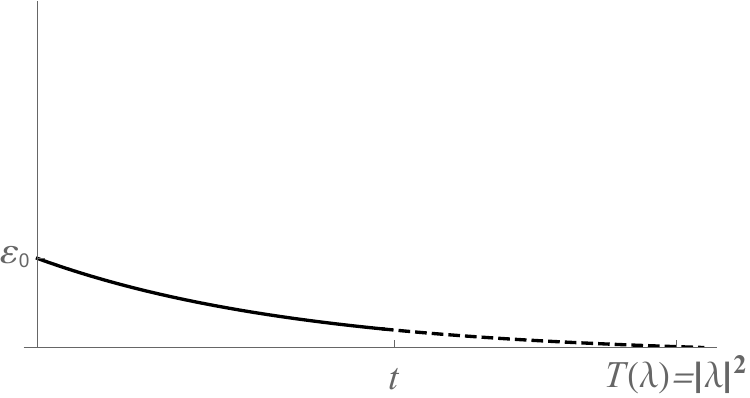}%
\caption{If $\varepsilon_{0}$ is small and positive, $\varepsilon(s)$ will
remain small and positive up to time $t$, \textit{provided} that $t\leq
T(\lambda)=\left\vert \lambda\right\vert ^{2}$ }%
\label{outside.fig}%
\end{figure}

In the case that $\lambda$ is outside the disk, we may then simply let
$\varepsilon_{0}$ approach zero in the Hamilton--Jacobi formula, giving the
following result.

\begin{proposition}
Suppose $\left\vert \lambda\right\vert \geq\sqrt{t},$ that is, that $\lambda$
is outside the open disk of radius $\sqrt{t}$ centered at 0. Then we may let
$\varepsilon_{0}$ tend to zero in the Hamilton--Jacobi formula
(\ref{HJcircular}) to obtain%
\begin{align}
\lim_{\varepsilon\rightarrow0^{+}}S^{\lambda}(t,\varepsilon) &  =\lim
_{\varepsilon_{0}\rightarrow0}\left(  \log(\left\vert \lambda\right\vert
^{2}+\varepsilon_{0})-\frac{\varepsilon_{0}t}{(\left\vert \lambda\right\vert
^{2}+\varepsilon_{0})^{2}}\right)  \nonumber\\
&  =\log(\left\vert \lambda\right\vert ^{2}).\label{sLimOut}%
\end{align}
Since the right-hand side of (\ref{sLimOut}) is harmonic, we conclude that%
\[
\Delta s_{t}(\lambda)=0,\quad\left\vert \lambda\right\vert >\sqrt{t}.
\]
That is to say, the Brown measure of $c_{t}$ is zero outside the disk of
radius $\sqrt{t}$ centered at 0.
\end{proposition}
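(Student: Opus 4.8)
The plan is to combine the Hamilton--Jacobi formula (\ref{HJcircular}) from Theorem \ref{odeSoln.thm} with the lifetime computation recorded just above: when $|\lambda|\ge\sqrt{t}$, i.e. $t\le T(\lambda)=|\lambda|^{2}$, the characteristic starting at $x_{0}$ survives until time $|\lambda|^{2}+x_{0}$, which exceeds $t$ for \emph{every} $x_{0}>0$ --- including in the boundary case $|\lambda|=\sqrt{t}$, where the lifetime is $t+x_{0}>t$. Thus formula (\ref{HJcircular}) is legitimately available for all $x_{0}>0$, and the whole point is to send $x_{0}\to 0^{+}$.

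First I would observe that the endpoint
\[
x(t)=x_{0}\left(1-\frac{t}{|\lambda|^{2}+x_{0}}\right)^{2}
\]
of the characteristic tends to $0^{+}$ as $x_{0}\to 0^{+}$: the prefactor $x_{0}$ vanishes while the bracketed factor stays bounded (it converges to $(1-t/|\lambda|^{2})^{2}\in(0,1]$), and $x(t)>0$ for each $x_{0}>0$. Next, $x\mapsto S^{\lambda}(t,x)$ is non-decreasing on $(0,\infty)$ --- because $(c_{t}-\lambda)^{\ast}(c_{t}-\lambda)+x$ increases with $x$, $\log$ is operator monotone, and $\tau$ is order preserving --- so $s_{t}(\lambda)=\lim_{x\to 0^{+}}S^{\lambda}(t,x)$ exists (this is also asserted in Theorem \ref{BrownMeasure.thm}). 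Consequently that limit may be evaluated along the particular null sequence $x=x(t)$ produced by letting $x_{0}\to 0^{+}$, and the right-hand side of (\ref{HJcircular}) yields
\[
s_{t}(\lambda)=\lim_{x_{0}\to 0^{+}}\left(\log(|\lambda|^{2}+x_{0})-\frac{x_{0}t}{(|\lambda|^{2}+x_{0})^{2}}\right)=\log(|\lambda|^{2}),
\]
which is (\ref{sLimOut}).

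It then remains to pass from this pointwise identity to the statement about the Brown measure. For $t>0$ the open set $\{|\lambda|>\sqrt{t}\}$ avoids the origin, so on it $s_{t}(\lambda)=\log(|\lambda|^{2})=2\operatorname{Re}(\log\lambda)$ locally, a smooth harmonic function; hence its distributional Laplacian vanishes there. Since by Theorem \ref{BrownMeasure.thm} the Brown measure of $c_{t}$ is $\tfrac{1}{4\pi}\Delta s_{t}$, it assigns no mass to $\{|\lambda|>\sqrt{t}\}$, which is exactly the claim.

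I do not expect a serious obstacle here. The two points needing care are (a) verifying that the characteristic has lifetime strictly greater than $t$ uniformly for small $x_{0}>0$, so that (\ref{HJcircular}) genuinely applies --- this is precisely where the hypothesis $|\lambda|\ge\sqrt{t}$ enters --- and (b) the routine justification that the limit defining $s_{t}(\lambda)$ can be computed along the one-parameter family $x=x(t)$, which rests on the monotonicity of $S^{\lambda}(t,\cdot)$. The genuinely harder analysis, the regime $|\lambda|<\sqrt{t}$ where the naive choice $x_{0}\to 0$ fails, is handled separately in the next subsection.
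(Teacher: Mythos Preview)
Your proof is correct and follows essentially the same approach as the paper: both let $x_{0}\to 0^{+}$ in the Hamilton--Jacobi formula (\ref{HJcircular}), using that the lifetime $|\lambda|^{2}+x_{0}$ exceeds $t$ whenever $|\lambda|\ge\sqrt{t}$. You add welcome rigor the paper leaves implicit---namely, the monotonicity argument justifying that $\lim_{x\to 0^{+}}S^{\lambda}(t,x)$ may be computed along the particular sequence $x=x(t)$---though note a harmless slip: in the boundary case $|\lambda|=\sqrt{t}$ the bracketed factor converges to $0$, so its limiting value lies in $[0,1)$ rather than $(0,1]$.
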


\subsection{Letting $\varepsilon$ tend to zero: inside the disk}

We now turn to the case in which the time $t$ we are interested in is greater
than the small-$\varepsilon_{0}$ lifetime $T(\lambda)$ of the solutions to
(\ref{Ham1})--(\ref{Ham2}). This case corresponds to $t>T(\lambda
)^{2}=\left\vert \lambda\right\vert ^{2},$ that is, $\left\vert \lambda
\right\vert <\sqrt{t}.$ We still want to choose $\varepsilon_{0}$ so that
$\varepsilon(t)$ will approach zero, but we cannot let $\varepsilon_{0}$ tend
to zero, or else the lifetime of the solution will be less than $t.$ Instead,
we allow the \textit{second} factor in the formula (\ref{xOfT}) for
$\varepsilon(t)$ to approach zero. To make this factor approach zero, we make
$\left\vert \lambda\right\vert ^{2}+\varepsilon_{0}$ approach $t,$ that is,
$\varepsilon_{0}$ should approach $t-\left\vert \lambda\right\vert ^{2}.$ Note
that since we are now assuming that $\left\vert \lambda\right\vert <\sqrt{t},$
the quantity $t-\left\vert \lambda\right\vert ^{2}$ is positive. This strategy
is illustrated in Figure \ref{inside.fig}: When $\varepsilon_{0}=t-\left\vert
\lambda\right\vert ^{2},$ we obtain $\varepsilon(t)=0$ and if $\varepsilon
_{0}$ approaches $t-\left\vert \lambda\right\vert ^{2}$ from above, the value
of $\varepsilon(t)$ approaches 0 from above.%

\begin{figure}[ptb]%
\centering
\includegraphics[
height=2.3506in,
width=3.7775in
]%
{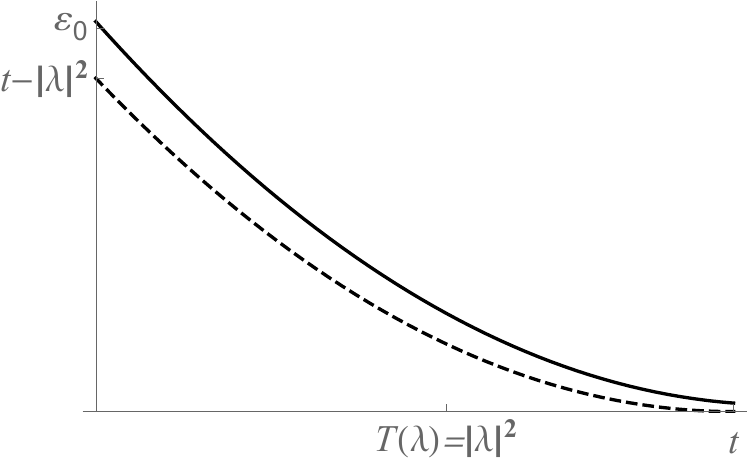}%
\caption{If $\left\vert \lambda\right\vert <\sqrt{t}$ and we let
$\varepsilon_{0}$ approach $t-\left\vert \lambda\right\vert ^{2}$ from above,
$\varepsilon(s)$ will remain positive until time $t$ and $\varepsilon(t)$ will
approach zero}%
\label{inside.fig}%
\end{figure}

\begin{proposition}
Suppose $\left\vert \lambda\right\vert \leq\sqrt{t},$ that is, that $\lambda$
is inside the closed disk of radius $\sqrt{t}$ centered at 0. Then in the
Hamilton--Jacobi formula (\ref{HJcircular}), we may let $\varepsilon_{0}$
approach $t-\left\vert \lambda\right\vert ^{2}$ from above, and we get%
\[
\lim_{\varepsilon\rightarrow0^{+}}S^{\lambda}(t,\varepsilon)=\log
t-1+\frac{\left\vert \lambda\right\vert ^{2}}{t},\quad\left\vert
\lambda\right\vert \leq\sqrt{t}.
\]
For $\left\vert \lambda\right\vert <\sqrt{t}$ we may then compute
\[
\frac{1}{4\pi}\Delta s_{t}(\lambda)=\frac{1}{\pi t}.
\]
Thus, inside the disk of radius $\sqrt{t},$ the Brown measure has a constant
density of $1/(\pi t).$
\end{proposition}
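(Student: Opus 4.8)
The plan is to read off $s_t(\lambda):=\lim_{x\to 0^+}S^\lambda(t,x)$ from the Hamilton--Jacobi formula (\ref{HJcircular}) by choosing the initial point $x_0$ of the characteristic correctly, and then to differentiate the explicit expression that results. The key observation is that the naive choice $x_0\to 0^+$ is illegal here: it would make the lifetime $|\lambda|^2+x_0$ of the characteristic tend to $|\lambda|^2<t$, so (\ref{HJcircular}) would not be available at the time $t$ of interest. First I would therefore note that, because $|\lambda|<\sqrt t$, the number $t-|\lambda|^2$ is strictly positive, and that for every $x_0>t-|\lambda|^2$ the lifetime $|\lambda|^2+x_0$ exceeds $t$, so Theorem \ref{odeSoln.thm} applies and (\ref{HJcircular}) holds at time $t$; moreover, from (\ref{xOfT}) the endpoint $x(t)=x_0\bigl(1-t/(|\lambda|^2+x_0)\bigr)^2$ is positive for such $x_0$ and tends to $0^+$ as $x_0\to(t-|\lambda|^2)^+$.

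Next I would justify that this limit really computes $s_t(\lambda)$. Letting $\nu$ denote the distribution (with respect to $\tau$) of the positive operator $c_{t,\lambda}^\ast c_{t,\lambda}$, the spectral theorem gives $S^\lambda(t,x)=\int_0^\infty\log(s+x)\,d\nu(s)$, which for each fixed $s$ is increasing and continuous in $x>0$; hence $x\mapsto S^\lambda(t,x)$ is nondecreasing, so $\lim_{x\to 0^+}S^\lambda(t,x)$ exists in $[-\infty,\infty)$ and coincides with the limit of $S^\lambda(t,x_k)$ along \emph{any} sequence $x_k\to 0^+$. Applying this with $x_k=x(t)$ as $x_0\to(t-|\lambda|^2)^+$, and using that the right-hand side of (\ref{HJcircular}) is a continuous function of $x_0$ near $x_0=t-|\lambda|^2>0$, I get
\[
s_t(\lambda)=\lim_{x_0\to (t-|\lambda|^2)^+}\left(\log(|\lambda|^2+x_0)-\frac{x_0 t}{(|\lambda|^2+x_0)^2}\right)=\log t-\frac{(t-|\lambda|^2)t}{t^2}=\log t-1+\frac{|\lambda|^2}{t}
\]
for $|\lambda|<\sqrt t$ (the boundary case $|\lambda|=\sqrt t$ then follows by continuity, or directly from the exterior formula $s_t(\lambda)=\log|\lambda|^2$).

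It then remains to differentiate. Writing $\lambda=a+ib$, so that $|\lambda|^2=a^2+b^2$ and $\Delta=\partial_a^2+\partial_b^2$, we have $\Delta|\lambda|^2=4$ while $\log t-1$ is constant, so $\frac1{4\pi}\Delta s_t(\lambda)=\frac1{4\pi}\cdot\frac4t=\frac1{\pi t}$ on the open disk $\{|\lambda|<\sqrt t\}$, which is the asserted constant density. To upgrade this, together with the exterior proposition, to the statement that $\mu^{c_t}$ is \emph{exactly} the uniform probability measure on the disk of radius $\sqrt t$, I would also check that the interior expression $\log t-1+|\lambda|^2/t$ matches the exterior expression $\log|\lambda|^2$ at $|\lambda|=\sqrt t$ and that their radial derivatives agree there ($2|\lambda|/t=2/\sqrt t=2/|\lambda|$); this makes $s_t$ a $C^1$ function across the circle, so its distributional Laplacian carries no singular mass on $\{|\lambda|=\sqrt t\}$, and the density integrates to $\frac1{\pi t}\cdot\pi t=1$.

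The step I expect to be the main obstacle is the interchange of limits in the second paragraph --- making rigorous the assertion that pushing $x_0$ to $(t-|\lambda|^2)^+$ along characteristics recovers precisely $\lim_{x\to 0^+}S^\lambda(t,x)$ --- together with the $C^1$-matching across $\{|\lambda|=\sqrt t\}$, which is exactly what excludes a singular component of the Brown measure on that circle; everything else is a direct substitution into (\ref{HJcircular}) and an elementary Laplacian computation.
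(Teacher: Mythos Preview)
Your proposal is correct and follows essentially the same approach as the paper: choose $x_0\to(t-|\lambda|^2)^+$ so that the lifetime $|\lambda|^2+x_0$ stays above $t$ while $x(t)\to 0^+$, then substitute into (\ref{HJcircular}) and take the Laplacian. You are in fact slightly more careful than the paper in two places: you supply the monotonicity argument (via the spectral representation of $S^\lambda(t,\cdot)$) that justifies why the limit along the characteristic really equals $\lim_{x\to 0^+}S^\lambda(t,x)$, which the paper leaves implicit; and you fold the $C^1$-matching at $|\lambda|=\sqrt t$ into the proof, whereas the paper relegates that to the separate ``On the boundary'' subsection immediately afterward.
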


\begin{proof}
We use the Hamilton--Jacobi formula (\ref{HJcircular}). Since the lifetime of
our solution is $\left\vert \lambda\right\vert ^{2}+\varepsilon_{0},$ if we
let $\varepsilon_{0}$ approach $t-\left\vert \lambda\right\vert ^{2}$ from
above, the lifetime will always be at least $t.$ In this limit, the formula
(\ref{xOfT}) for $\varepsilon(t)$ approaches zero from above. Thus, we may
take the limit $\varepsilon_{0}\rightarrow(t-\left\vert \lambda\right\vert
^{2})^{+}$ in (\ref{HJcircular}) to obtain
\begin{align*}
\lim_{\varepsilon\rightarrow0^{+}}S^{\lambda}(t,\varepsilon) &  =\lim
_{\varepsilon_{0}\rightarrow(t-\left\vert \lambda\right\vert ^{2})^{+}}\left[
\log(\left\vert \lambda\right\vert ^{2}+\varepsilon_{0})-\frac{\varepsilon
_{0}t}{(\left\vert \lambda\right\vert ^{2}+\varepsilon_{0})^{2}}\right]  \\
&  =\log t-\frac{(t-\left\vert \lambda\right\vert ^{2})t}{t^{2}},
\end{align*}
which simplifies to the claimed formula.
\end{proof}

\subsection{On the boundary}

Note that if $\left\vert \lambda\right\vert ^{2}=t,$ both approaches are
valid---and the two values of $s_{t}(\lambda):=\lim_{\varepsilon
\rightarrow0^{+}}S^{\lambda}(t,\varepsilon)$ agree, with a common value of
$\log t=\log\left\vert \lambda\right\vert ^{2}.$ Furthermore, the radial
derivatives of $s_{t}(\lambda)$ agree on the boundary: $2/r$ on the outside
and $2r/t$ on the inside, which have a common value of $2/\sqrt{t}$ at
$r=\sqrt{t}.$ Of course, the angular derivatives of $s_{t}(\lambda)$ are
identically zero, inside, outside, and on the boundary.

Since the first derivatives of $s_{t}$ are continuous up to the boundary, we
may take the distributional Laplacian by taking the ordinary Laplacian inside
the disk and outside the disk and ignoring the boundary. (See the proof of
Proposition 7.13 in \cite{DHKBrown}.) Thus, we may compute the Laplacian of
the two formulas in (\ref{stFormula}) to obtain the formula (\ref{WtFormula})
for the Brown measure of $c_{t}.$

\section{The case of the free multiplicative Brownian motion}

\subsection{Additive and multiplicative models}

The standard GUE and Ginibre ensembles are given by Gaussian measures on the
relevant space of matrices (Hermitian matrices for GUE\ and all matrices for
the Ginibre ensemble). In light of the central limit theorem, these ensembles
can be approximated by adding together large numbers of small, independent
random matrices. We may therefore refer to these Gaussian ensembles as
\textquotedblleft additive\textquotedblright\ models.

It is natural to consider also \textquotedblleft
multiplicative\textquotedblright\ random matrix models, which can be
approximated by \textit{multiplying} together large numbers of independent
matrices that are \textquotedblleft small\textquotedblright\ in the
multiplicative sense, that is, close to the identity. Specifically, if
$Z^{\mathrm{add}}$ is a random matrix with a Gaussian distribution, we will
consider a multiplicative version $Z_{t}^{\mathrm{mult}},$ where the
distribution of $Z_{t}^{\mathrm{mult}}$ may be approximated as
\begin{equation}
Z_{t}^{\mathrm{mult}}\sim\prod_{j=1}^{k}\left(  I+i\sqrt{\frac{t}{k}}%
Z_{j}^{\mathrm{add}}-\frac{t}{k}\mathrm{It\hat{o}}\right)  ,\quad k\text{
large.} \label{productK}%
\end{equation}
Here $t$ is a positive parameter, the $Z_{j}^{\mathrm{add}}$'s are independent
copies of $Z^{\mathrm{add}},$ and \textquotedblleft It\^{o}\textquotedblright%
\ is an It\^{o} correction term. This correction term is a fixed multiple of
the identity, independent of $t$ and $k.$ (In the next paragraph, we will
identify the It\^{o} term in the main cases of interest.) Since the factors in
(\ref{productK}) are independent and identically distributed, the order of the
factors does not affect the distribution of the product.

The two main cases we will consider are those in which $Z$ is distributed
according to the Gaussian unitary ensemble or the Ginibre ensemble. In the
case that $Z$ is distributed according to the Gaussian unitary ensemble, the
It\^{o} term is $\mathrm{It\hat{o}}=\frac{1}{2}I.$ In this case, the resulting
multiplicative model may be described as \textit{Brownian motion in the
unitary group} $\mathsf{U}(N),$ which we write as $U_{t}^{N}.$ The It\^{o}
correction is essential in this case to ensure that $Z_{t}^{\mathrm{mult}}$
actually lives in the unitary group. In the case that $Z$ is distributed
according to the Ginibre ensemble, the It\^{o} term is zero. In this case, the
resulting multiplicative model may be described as \textit{Brownian motion in
the general linear group} $\mathsf{GL}(N;\mathbb{C})$, which we write as
$B_{t}^{N}.$

\subsection{The free unitary and free multiplicative Brownian
motions\label{freeBr.sec}}

The large-$N$ limits of the Brownian motions $U_{t}^{N}$ and $B_{t}^{N}$ were
constructed by Biane \cite{BianeFields}. The limits are the \textbf{free
unitary Brownian motion} and the \textbf{free multiplicative Brownian motion},
respectively, which we write as $u_{t}$ and $b_{t}.$ The qualifier
\textquotedblleft free\textquotedblright\ indicates that the increments of
these Brownian motions---computed in the multiplicative sense as $u_{s}%
^{-1}u_{t}$ or $b_{s}^{-1}b_{t}$---are freely independent in the sense of
Section \ref{free.sec}. In the case of $b_{t},$ the convergence of $B_{t}^{N}$
to $b_{t}$ was conjectured by Biane \cite{BianeFields} and proved by Kemp
\cite{KempLargeN}. In both cases, we take the limiting object to be an element
of a tracial von Neumann algebra $(\mathcal{A},\tau).$

Since $u_{t}$ is unitary, we do not need to use the machinery of Brown
measure, but can rather use the spectral theorem as in (\ref{spectral}) to
compute the \textbf{distribution}\ of $u_{t},$ denoted $\nu_{t}.$ We emphasize
that $\nu_{t}$ is, in fact, the Brown measure of $u_{t},$ but it easier to
describe $\nu_{t}$ using the spectral theorem than to use the general Brown
measure construction. The measure $\nu_{t}$ is a probability measure on the
unit circle describing the large-$N$ limit of Brownian motion in the unitary
group $\mathsf{U}(N).$ Biane computed the measure $\nu_{t}$ in
\cite{BianeFields} and established the following support result.

\begin{theorem}
\label{bianeSupport.thm}For $t<4,$ the measure $\nu_{t}$ is supported on a
proper subset of the unit circle:%
\[
\mathrm{supp}(\nu_{t})=\left\{  \left.  e^{i\theta}\right\vert ~\left\vert
\theta\right\vert \leq\frac{1}{2}\sqrt{t(4-t)}+\cos^{-1}\left(  1-\frac{t}%
{2}\right)  \right\}  ,\quad t<4.
\]
By contrast, for all $t\geq4,$ the closed support of $\nu_{t}$ is the whole
unit circle.
\end{theorem}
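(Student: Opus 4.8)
The plan is to reduce the support question to an explicit complex-analytic computation, in the spirit of the treatment of the circular law above but with the moment generating function of $\nu_t$ playing the role of $S^\lambda$. Recall from Section \ref{freeBr.sec} that $\nu_t$ is the distribution (equivalently, the Brown measure) of the unitary element $u_t$, a probability measure on the unit circle $\mathbb{T}$, whose moments are $\int_{\mathbb{T}}\zeta^n\,d\nu_t(\zeta)=\tau(u_t^n)$ by \eqref{holoMoment}. Encode it by the function
\[
\psi_t(z)=\int_{\mathbb{T}}\frac{\zeta z}{1-\zeta z}\,d\nu_t(\zeta)=\sum_{n=1}^{\infty}\tau(u_t^n)\,z^n,\qquad z\in\mathbb{D},
\]
holomorphic on the open unit disk $\mathbb{D}$. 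Up to an affine normalization $\psi_t$ is a Herglotz transform of $\nu_t$, so by the classical boundary theory of such functions $\nu_t$ is, on each open arc across which $\psi_t$ extends analytically, absolutely continuous with a real-analytic density that (by analyticity) is either identically zero or strictly positive; consequently $\mathrm{supp}(\nu_t)$ is the closure of the set of $e^{i\theta}\in\mathbb{T}$ at which $\psi_t$ fails to extend analytically or has non-real boundary value. The task is thus to (i) compute $\psi_t$ and (ii) locate those points.

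For (i) I would run the free stochastic calculus used above for the circular case, now on the free unitary Brownian motion, which satisfies the free stochastic differential equation $du_t=i\,u_t\,dw_t-\tfrac12 u_t\,dt$ for a free semicircular Brownian motion $w_t$. Differentiating $\tau(u_t^n)$ with the product rule \eqref{productRule1}--\eqref{productRule2} and the It\^{o} rules \eqref{Ito1}--\eqref{Ito4}, and recognizing the resulting sums as Cauchy products of $\psi_t$ with $z\,\partial_z\psi_t$, yields the quasilinear first-order PDE
\[
\frac{\partial\psi_t}{\partial t}+z\Bigl(\tfrac12+\psi_t\Bigr)\frac{\partial\psi_t}{\partial z}=0,\qquad \psi_0(z)=\frac{z}{1-z},
\]
the initial condition reflecting $u_0=1$. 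This is solved by the method of characteristics (the analogue, for a quasilinear equation, of the Hamilton--Jacobi analysis of Section \ref{solving.sec}): $\psi_t\equiv w$ along $z(s)=z_0\,e^{s(1/2+w)}$, and matching $\psi_0(z_0)=w$ forces $z_0=w/(1+w)$. Hence the compositional inverse of $\psi_t$ is the explicit function
\[
\chi_t(w)=\frac{w}{1+w}\,e^{t(w+1/2)},
\]
holomorphic on $\mathbb{C}\setminus\{-1\}$; one may take this as the starting point in place of the PDE.

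For (ii), the germ of $\psi_t$ at $0$ continues analytically as a local inverse of $\chi_t$ until it meets a critical value of $\chi_t$, so the arc endpoints are images under $\chi_t$ of its critical points. From $\chi_t'(w)/\chi_t(w)=\tfrac{1}{w(1+w)}+t$ the critical points solve $tw^2+tw+1=0$, i.e.\ $w_\pm=\tfrac12\bigl(-1\pm\sqrt{1-4/t}\bigr)$. For $t<4$ these are complex conjugates with $|w_\pm|^2=1/t$ and $|1+w_\pm|^2=1/t$, whence $|\chi_t(w_\pm)|=1$, so $\chi_t(w_\pm)=e^{\pm i\alpha_t}$ with
\[
\alpha_t=\arg w_+-\arg(1+w_+)+t\,\mathrm{Im}\,w_+ .
\]
Evaluating the three terms via $\cos(\arg w_+)=-\sqrt{t}/2$, $\cos(\arg(1+w_+))=\sqrt{t}/2$ and $t\,\mathrm{Im}\,w_+=\tfrac12\sqrt{t(4-t)}$, and simplifying with the half-angle identity $\pi-2\cos^{-1}(\sqrt{t}/2)=\cos^{-1}(1-t/2)$, gives $\alpha_t=\tfrac12\sqrt{t(4-t)}+\cos^{-1}(1-t/2)$, the endpoint angle in the theorem; one then checks that $\psi_t$ does extend across every $e^{i\theta}$ with $|\theta|>\alpha_t$, with vanishing density there, and that the density is positive for $|\theta|<\alpha_t$, so $\mathrm{supp}(\nu_t)=\{e^{i\theta}:|\theta|\le\alpha_t\}$. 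At $t=4$ the critical points coalesce at $w=-\tfrac12$ with $\chi_4(-\tfrac12)=-1$, i.e.\ $\alpha_4=\pi$, consistent with $\alpha_t\nearrow\pi$ as $t\nearrow4$; and for $t>4$ the critical points $w_\pm$ are real and lie in $(-1,0)\subset\mathbb{D}$, off $\mathbb{T}$. In that regime I would conclude $\mathrm{supp}(\nu_t)=\mathbb{T}$ either by invoking Biane's explicit formula for the (strictly positive, real-analytic) density of $\nu_t$ on the whole circle, or by showing directly that once the obstruction has moved into $\mathbb{D}$, $\psi_t$ admits no analytic continuation across any point of $\mathbb{T}$.

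I expect the main obstacle to be the boundary bookkeeping inside (ii): proving that $\psi_t$ is univalent on $\mathbb{D}$ (so that $\chi_t$ is genuinely its inverse), that the analytic-continuation obstructions of $\psi_t$ on $\mathbb{T}$ are \emph{precisely} the critical values of $\chi_t$ that happen to lie on $\mathbb{T}$ and nothing else, that the density is positive throughout the open arc $|\theta|<\alpha_t$, and the separate positivity argument for $t>4$. By contrast, deriving the PDE, solving it for $\chi_t$, and the algebra that turns its critical values into the closed form $\tfrac12\sqrt{t(4-t)}+\cos^{-1}(1-t/2)$ are routine.
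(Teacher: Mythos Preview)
The paper does not prove this theorem at all: it is stated as a known result of Biane and attributed to \cite{BianeFields}, with no argument given beyond that citation. So there is no proof in the paper to compare your proposal against.

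Your outline is essentially a reconstruction of Biane's original computation, and the explicit parts are correct. The PDE for $\psi_t$, the formula $\chi_t(w)=\frac{w}{1+w}e^{t(w+1/2)}$ for its compositional inverse, the critical-point equation $tw^2+tw+1=0$, the verification that $|\chi_t(w_\pm)|=1$ for $t<4$, and the trigonometric reduction to $\alpha_t=\tfrac12\sqrt{t(4-t)}+\cos^{-1}(1-t/2)$ all check out. You have also correctly flagged where the real content lies: the univalence of $\psi_t$ on $\mathbb{D}$, the identification of the boundary singularities of $\psi_t$ with precisely the critical values of $\chi_t$ lying on $\mathbb{T}$, and the positivity of the density on the open arc (and on all of $\mathbb{T}$ for $t\ge 4$). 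Those steps are not hard but do require care; they are what Biane's paper supplies and what one would need to fill in to make your sketch a proof.
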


In the physics literature, the change in behavior of the support of $\nu_{t}$
at $t=4$ is called a \textit{topological phase transition}, indicating that
the topology of $\mathrm{supp}(\nu_{t})$ changes from a closed interval to a circle.%

\begin{figure}[ptb]%
\centering
\includegraphics[
height=2.4734in,
width=2.5278in
]%
{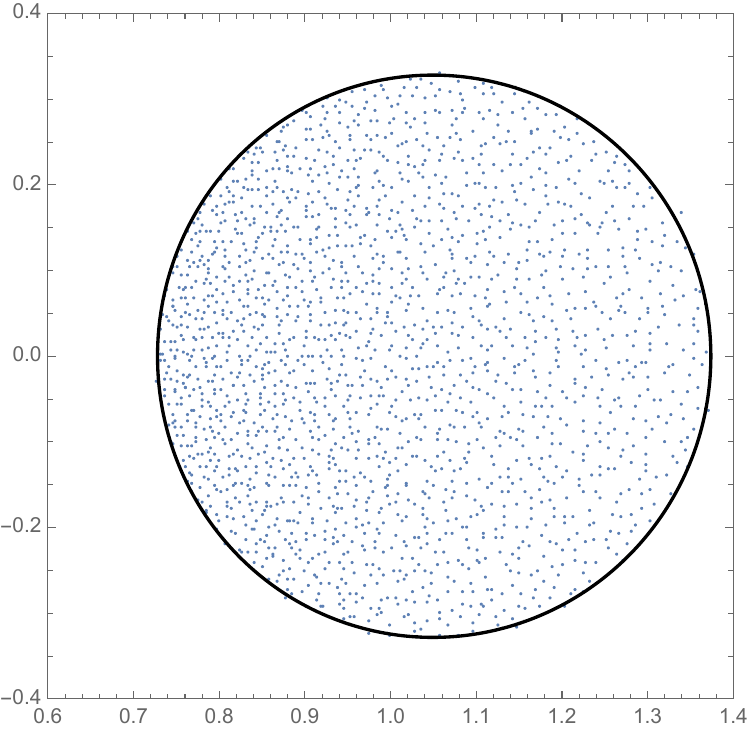}%
\caption{The eigenvalues of $B_{t}^{N}$ with $t=0.1$ and $N=2.000$}%
\label{t01.fig}%
\end{figure}

The remainder of this article is devoted to recent results of the author with
Driver and Kemp regarding the Brown measure of the free multiplicative
Brownian motion $b_{t}.$ We expect that the Brown measure of $b_{t}$ will be
the limiting empirical eigenvalue distribution of the Brownian motion
$B_{t}^{N}$ in the general linear group $\mathsf{GL}(N;\mathbb{C})$. Now, when
$t$ is small, we may take $k=1$ in (\ref{productK}), so that (since the
It\^{o} correction is zero in this case),
\[
B_{t}^{N}\sim I+i\sqrt{\frac{t}{k}}Z,\quad t\text{ small.}%
\]
Thus, when $t$ is small and $N$ is large, the eigenvalues of $B_{t}^{N}$
resemble a scaled and shifted version of the circular law. Specifically, the
eigenvalue distribution should resemble a uniform distribution on the disk of
radius $\sqrt{t}$ centered at 1.%

\begin{figure}[ptb]%
\centering
\includegraphics[
height=2.1179in,
width=4.0283in
]%
{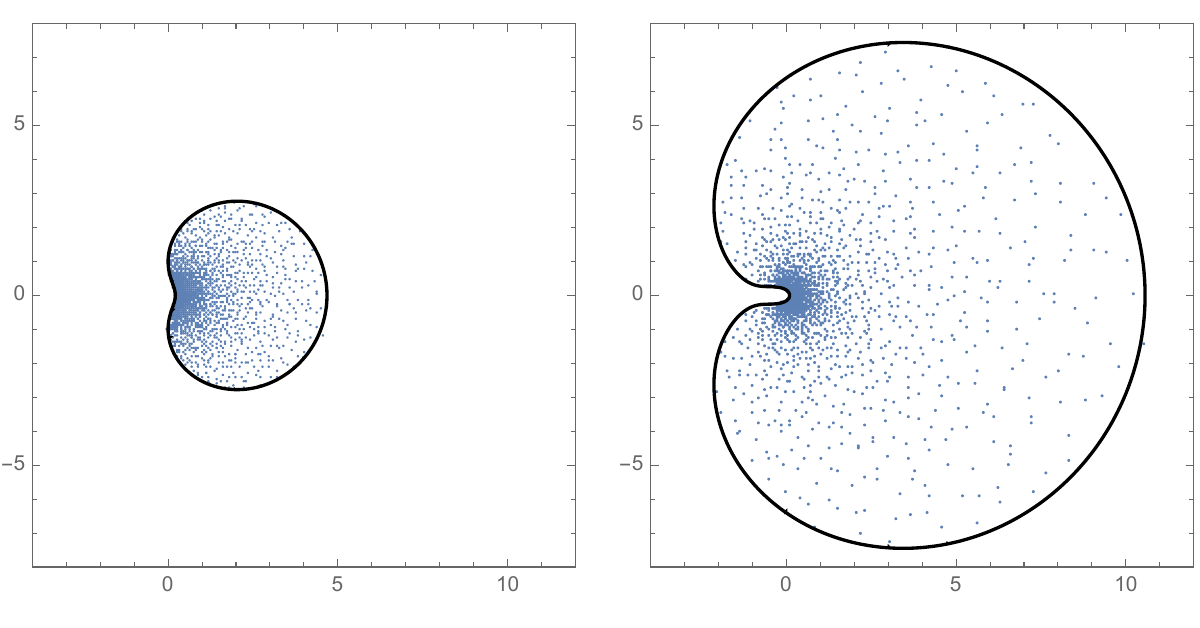}%
\caption{Eigenvalues of $B_{t}^{N}$ for $t=2$ (left) and $t=3.9$ (right), with
$N=2,000$}%
\label{t2and39.fig}%
\end{figure}

Figure \ref{t01.fig} shows the eigenvalues of $B_{t}^{N}$ with $t=0.1$ and
$N=2,000.$ The eigenvalue distribution bears a clear resemblance to the
just-described picture, with $\sqrt{t}=\sqrt{0.1}\approx0.316.$ Nevertheless,
we can already see some deviation from the small-$t$ picture: The region into
which the eigenvalues are clustering looks like a disk, but not quite centered
at 1, while the distribution within the region is slightly higher at the
left-hand side of the region than the right. Figures \ref{t2and39.fig} and
\ref{t4and41.fig}, meanwhile, show the eigenvalue distribution of $B_{t}^{N}$
for several larger values of $t.$ The region into which the eigenvalues
cluster becomes more complicated as $t$ increases, and the distribution of
eigenvalues in the region becomes less and less uniform. We expect that the
Brown measure of the limiting object $b_{t}$ will be supported on the domain
into which the eigenvalues are clustering.

%

\begin{figure}[ptb]%
\centering
\includegraphics[
height=2.1179in,
width=4.0283in
]%
{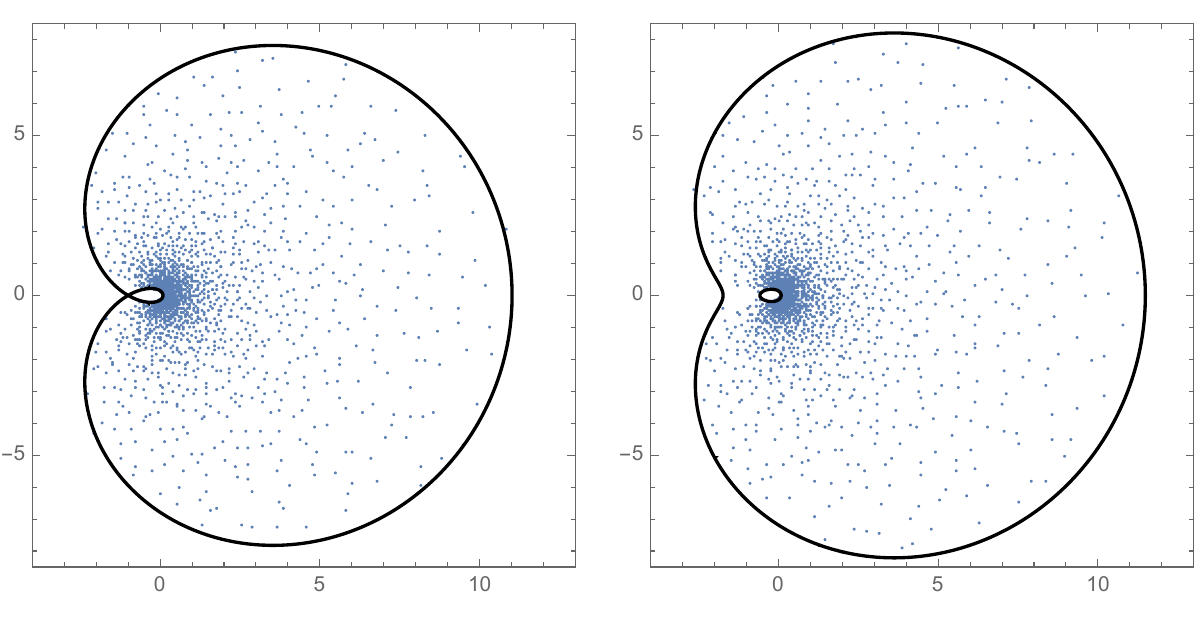}%
\caption{Eigenvalues of $B_{t}^{N}$ for $t=4$ (left) and $t=4.1$ (right), with
$N=2,000$}%
\label{t4and41.fig}%
\end{figure}

\subsection{The domains $\Sigma_{t}$\label{domains.sec}}%

\begin{figure}[ptb]%
\centering
\includegraphics[
height=3.0277in,
width=2.3402in
]%
{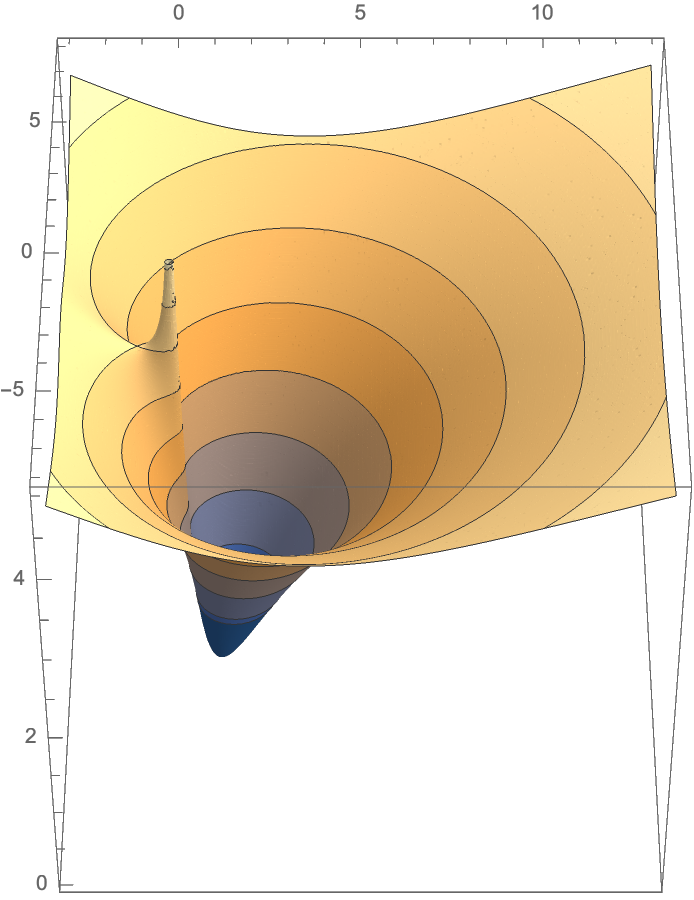}%
\caption{A plot of the function $T(\lambda).$ The function has a minimum at
$\lambda=1,$ a saddle point at $\lambda=-1,$ and a singularity at $\lambda
=0.$}%
\label{tplot.fig}%
\end{figure}

We now describe certain domains $\Sigma_{t}$ in the plane, as introduced by
Biane in \cite[pp. 273-274]{BianeJFA}. It will turn out that the Brown measure
of $b_{t}$ is supported on $\Sigma_{t}.$ We use here a new the description of
$\Sigma_{t},$ as given in Section 4 of \cite{DHKBrown}. For all nonzero
$\lambda\in\mathbb{C},$ we define
\begin{equation}
T(\lambda)=\left\vert \lambda-1\right\vert ^{2}\frac{\log(\left\vert
\lambda\right\vert ^{2})}{\left\vert \lambda\right\vert ^{2}-1}. \label{Tdef}%
\end{equation}
If $\left\vert \lambda\right\vert ^{2}=1,$ we interpret $\log(\left\vert
\lambda\right\vert ^{2})/(\left\vert \lambda\right\vert ^{2}-1)$ as having the
value 1 when $\left\vert \lambda\right\vert ^{2}=1,$ in accordance with the
limit
\[
\lim_{r\rightarrow1}\frac{\log r}{r-1}=1.
\]
See Figure \ref{tplot.fig} for a plot of this function.

We then define the domains $\Sigma_{t}$ as follows.

\begin{definition}
\label{sigma.def}For each $t>0,$ we define%
\[
\Sigma_{t}=\left\{  \left.  \lambda\in\mathbb{C}\right\vert T(\lambda
)<t\right\}  .
\]

\end{definition}

Several examples of these domains were plotted already in Figures
\ref{t01.fig}, \ref{t2and39.fig}, and \ref{t4and41.fig}. The domain
$\Sigma_{t}$ is simply connected for $t\leq4$ and doubly connected for $t>4.$
The change in behavior at $t=4$ occurs because $T$ has a saddle point at
$\lambda=-1$ and because $T(-1)=4.$ We note that a change in the topology of
the region occurs at $t=4,$ which is the same value of $t$ at which the
topology of the support of Biane's measure changes (Theorem
\ref{bianeSupport.thm}).

\subsection{The support of the Brown measure of $b_{t}$}

As we have noted, the domains $\Sigma_{t}$ were introduced by Biane in
\cite{BianeJFA}. Two subsequent works in the physics literature, the article
\cite{Nowak} by Gudowska-Nowak, Janik, Jurkiewicz, and Nowak and the article
\cite{Lohmayer} by Lohmayer, Neuberger, and Wettig then argued, using
nonrigorous methods, that the eigenvalues of $B_{t}^{N}$ should concentrate
into $\Sigma_{t}$ for large $N.$ The first rigorous result in this direction
was obtained by the author with Kemp \cite{HK}; we prove that the Brown
measure of $b_{t}$ is supported on the closure of $\Sigma_{t}.$

Now, we have already noted that $\Sigma_{t}$ is simply connected for $t\leq4$
but doubly connected for $t>4.$ Thus, the support of the Brown measure of the
free \textit{multiplicative} Brownian motion undergoes a \textquotedblleft
topological phase transition\textquotedblright\ at precisely the same value of
the time-parameter as the distribution of the free \textit{unitary} Brownian
motion (Theorem \ref{bianeSupport.thm}).

The methods of \cite{HK} explain this apparent coincidence, using the
\textquotedblleft free Hall transform\textquotedblright\ $\mathcal{G}_{t}$\ of
Biane \cite{BianeJFA}. Biane constructed this transform using methods of free
probability as an infinite-dimensional analog of the Segal--Bargmann transform
for $\mathsf{U}(N),$ which was developed by the author in \cite{Ha1994}. More
specifically, Biane's definition $\mathcal{G}_{t}$ draws on the stochastic
interpretation of the transform in \cite{Ha1994} given by Gross and Malliavin
\cite{GM}. Biane conjectured (with an outline of a proof) that $\mathcal{G}%
_{t}$ is actually the large-$N$ limit of the transform in \cite{Ha1994}. This
conjecture was then verified by in independent works of C\'{e}bron \cite{Ceb}
and the author with Driver and Kemp \cite{DHKLargeN}. (See also the expository
article \cite{HallExpository}.)

Recall from Section \ref{freeBr.sec} that the distribution of the free unitary
Brownian motion is Biane's measure $\nu_{t}$ on the unit circle, the support
of which is described in Theorem \ref{bianeSupport.thm}. A key ingredient in
\cite{HK} is the function $f_{t}$ given by%
\begin{equation}
f_{t}(\lambda)=\lambda e^{\frac{t}{2}\frac{1+\lambda}{1-\lambda}}. \label{ft1}%
\end{equation}
This function maps the complement of the closure of $\Sigma_{t}$ conformally
to the complement of the support of Biane's measure:%
\begin{equation}
f_{t}:\mathbb{C}\setminus\overline{\Sigma}_{t}\rightarrow\mathbb{C}%
\setminus\mathrm{supp}(\nu_{t}). \label{ft2}%
\end{equation}
(This map $f_{t}$ will also play a role in the results of Section
\ref{BrownBt.sec}; see Theorem \ref{connectToBiane.thm}.)

The key computation in \cite{HK} is that for $\lambda$ outside $\overline
{\Sigma}_{t},$ we have%
\begin{equation}
\mathcal{G}_{t}^{-1}\left(  \frac{1}{z-\lambda}\right)  =\frac{f_{t}(\lambda
)}{\lambda}\frac{1}{u-f_{t}(\lambda)},\quad\lambda\notin\overline{\Sigma}_{t}.
\label{GtInv}%
\end{equation}
See Theorem 6.8 in \cite{HK}. Properties of the free Hall transform then imply
that for $\lambda$ outside $\overline{\Sigma}_{t},$ the operator
$b_{t}-\lambda$ has an inverse. Indeed, the noncommutative $L^{2}$ norm of
$(b_{t}-\lambda)^{-1}$ equals to the norm in $L^{2}(S^{1},\nu_{t})$ of the
function on the right-hand side of (\ref{GtInv}). This norm, in turn, is
finite because $f_{t}(\lambda)$ is outside the support of $\nu_{t}$ whenever
$\lambda$ is outside $\overline{\Sigma}_{t}.$ The existence of an inverse to
$b_{t}-\lambda$ then shows that $\lambda$ must be outside the support of
$\mu_{b_{t}}.$

An interesting aspect of the paper \cite{HK} is that we not only
\textit{compute} the support of $\mu_{b_{t}},$ but also that we
\textit{connect} it to the support of Biane's measure $\nu_{t},$ using the
transform $\mathcal{G}_{t}$ and the conformal map $f_{t}.$

We note, however, that none of the papers \cite{Nowak}, \cite{Lohmayer}, or
\cite{HK} says anything about the distribution of $\mu_{b_{t}}$ within
$\Sigma_{t}$; they are only concerned with identifying the region $\Sigma
_{t}.$ The actual computation of $\mu_{b_{t}}$ (not just its support) was done
in \cite{DHKBrown}.

\subsection{The Brown measure of $b_{t}$\label{BrownBt.sec}}

We now describe the main results of \cite{DHKBrown}. Many of these results
have been extended by Ho and Zhong \cite{HZ} to the case of the free
multiplicative Brownian motion with an arbitrary unitary initial distribution.

The first key result in \cite{DHKBrown} is the following formula for the Brown
measure of $b_{t}$ (Theorem 2.2 of \cite{DHKBrown}).

\begin{theorem}
\label{dhkMain.thm}For each $t>0,$ the Brown measure $\mu_{b_{t}}$ is zero
outside the closure of the region $\Sigma_{t}.$ In the region $\Sigma_{t},$
the Brown measure has a density $W_{t}$ with respect to Lebesgue measure. This
density has the following special form in polar coordinates:%
\[
W_{t}(r,\theta)=\frac{1}{r^{2}}w_{t}(\theta),\quad re^{i\theta}\in\Sigma_{t},
\]
for some positive continuous function $w_{t}.$ The function $w_{t}$ is
determined entirely by the geometry of the domain and is given as%
\[
w_{t}(\theta)=\frac{1}{4\pi}\left(  \frac{2}{t}+\frac{\partial}{\partial
\theta}\frac{2r_{t}(\theta)\sin\theta}{r_{t}(\theta)^{2}+1-2r_{t}(\theta
)\cos\theta}\right)  ,
\]
where $r_{t}(\theta)$ is the \textquotedblleft outer radius\textquotedblright%
\ of the region $\Sigma_{t}$ at angle $\theta.$
\end{theorem}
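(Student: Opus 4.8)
The plan is to follow the same PDE-and-Hamilton--Jacobi strategy that worked for the circular case, but now applied to the multiplicative model. First I would set up the regularized log-determinant function $S^{\lambda}(t,x) = \tau[\log((b_t - \lambda)^{\ast}(b_t - \lambda) + x)]$, exactly as in \eqref{Sdef} but with $c_t$ replaced by $b_t$. The first step is to derive the analogue of Theorem \ref{circularPDE.thm}: a first-order PDE for $S^{\lambda}$ in $t$ and $x$. Here the multiplicative structure of $b_t$ (it solves a left-invariant stochastic differential equation $db_t = b_t\, dc_t$ on $\mathsf{GL}(N;\mathbb{C})$ in the limiting free sense) means the free It\^o calculus computation from Section 4.5 produces extra terms involving $\tau[b_t(\cdots)^{-1}]$ and $\tau[(\cdots)^{-1}b_t^{\ast}]$; organizing these into a closed PDE is the first nontrivial bookkeeping task. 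The resulting equation will still be of Hamilton--Jacobi type in $(t,x)$ with $\lambda$ as a parameter, but with a more complicated Hamiltonian than $H(x,p) = -xp^2$; crucially it will depend on $\lambda$ through coefficients, not just through the initial condition $S^{\lambda}(0,x) = \log(|\lambda - 1|^2 + x)$.

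Second, I would solve Hamilton's equations for this Hamiltonian, as in Theorem \ref{odeSoln.thm}. The key qualitative feature to extract is the \emph{lifetime} of the characteristic curve started at $x_0 > 0$, and then its limit $T(\lambda)$ as $x_0 \to 0^{+}$. The claim is that this limiting lifetime is precisely the function $T(\lambda) = |\lambda - 1|^2 \log(|\lambda|^2)/(|\lambda|^2 - 1)$ from \eqref{Tdef}, which then explains why the region $\Sigma_t = \{T(\lambda) < t\}$ is exactly where the ``simple strategy'' $x_0 \to 0$ fails and the Brown measure becomes nonzero. Outside $\overline{\Sigma}_t$ one lets $x_0 \to 0$ directly in the Hamilton--Jacobi formula and obtains a harmonic function (consistent with the support result of \cite{HK}); inside $\Sigma_t$ one instead chooses $x_0$ so that the characteristic lands at $x(t) = 0$, yielding an explicit formula for $s_t(\lambda) = \lim_{x\to 0^{+}} S^{\lambda}(t,x)$.

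Third, I would compute $\frac{1}{4\pi}\Delta s_t$ to get the density. The polar form $W_t(r,\theta) = \frac{1}{r^2} w_t(\theta)$ should fall out of the structure of the Hamilton--Jacobi solution: the explicit $s_t(\lambda)$ will have a specific dependence on $r = |\lambda|$, and after writing the Laplacian in polar coordinates $\Delta = \partial_r^2 + \frac1r \partial_r + \frac{1}{r^2}\partial_\theta^2$, the radial part should contribute the constant $\frac{2}{t}$ term (paralleling how the circular case gave the constant $\frac{1}{\pi t}$), while the angular part produces the $\partial_\theta$ term. Relating the constant $x_0$ that achieves $x(t) = 0$ to the outer radius $r_t(\theta)$ of $\Sigma_t$ is what forces the answer to be ``determined entirely by the geometry of the domain.'' As in Section 6, one needs the first derivatives of $s_t$ to be continuous across $\partial\Sigma_t$ so that the distributional Laplacian is just the classical one computed piecewise, plus a check that $w_t$ is positive and continuous.

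The main obstacle I expect is Step 1 combined with the rigor issues: deriving the correct PDE for the multiplicative model requires carefully tracking the free It\^o corrections for $b_t$ (which, unlike $c_t$, does not have increments that are simply free of the past in the additive sense), and then --- as in the circular proof --- justifying the term-by-term differentiation of the relevant power series and the analytic continuation in $x$ from a half-plane to establish the PDE rigorously rather than just formally. A secondary difficulty is that the characteristic ODEs for the multiplicative Hamiltonian will not integrate as cleanly as \eqref{Ham1}--\eqref{Ham2}; one may need to identify conserved quantities (the Hamiltonian itself, plus possibly one more) to solve them in closed form, and then do the delicate limiting analysis as $x_0$ approaches the critical value $t - $ (something depending on $\lambda$) to get the formula for $s_t$ inside $\Sigma_t$. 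I would also want Theorem \ref{connectToBiane.thm} (referenced in the excerpt, involving the conformal map $f_t$) available, since the identification of $r_t(\theta)$ and the boundary behavior of $s_t$ are cleanest when phrased through $f_t$ and Biane's measure $\nu_t$.
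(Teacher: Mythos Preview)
Your overall architecture (PDE for the regularized log-potential, Hamilton--Jacobi analysis, small-$x_0$ lifetime yielding the function $T(\lambda)$ and hence the region $\Sigma_t$) matches the paper. But there is a structural misconception in Step~1 that propagates through the rest. You assume the PDE will close up in $(t,x)$ alone, with $\lambda$ entering only as a coefficient. It does not: the free It\^o computation for $b_t$ produces terms that can only be expressed using $\partial S/\partial a$ and $\partial S/\partial b$ (where $\lambda=a+ib$), and the actual equation is
\[
\frac{\partial S}{\partial t}=x\frac{\partial S}{\partial x}\left(1+(|\lambda|^2-x)\frac{\partial S}{\partial x}-a\frac{\partial S}{\partial a}-b\frac{\partial S}{\partial b}\right).
\]
So the Hamilton--Jacobi problem is three-dimensional, with Hamiltonian $H(a,b,x,p_a,p_b,p_x)$, and along characteristics $\lambda$ \emph{moves}: you must find $(\lambda_0,x_0)$ with $\lambda(t)=\lambda$ and $x(t)=0$, which is a genuine surjectivity statement rather than a one-parameter choice of $x_0$. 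Your plan to ``choose $x_0$ so that the characteristic lands at $x(t)=0$'' is therefore incomplete as stated.

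The second gap is inside $\Sigma_t$. You propose to extract an explicit formula for $s_t(\lambda)$ from the Hamilton--Jacobi formula and then take its Laplacian. The difficulty is that the first Hamilton--Jacobi formula gives $S(t,\lambda(t),x(t))$ in terms of $(\lambda_0,x_0)$, and the inverse of the characteristic map $(\lambda_0,x_0)\mapsto(\lambda(t),x(t))$ is not explicit; so you would be differentiating an implicitly defined function. The paper bypasses this entirely: it uses the \emph{second} Hamilton--Jacobi formula $\nabla S(t,\mathbf{x}(t))=\mathbf{p}(t)$ together with an extra constant of motion $\Psi=xp_x+\tfrac12(ap_a+bp_b)$ (beyond $H$ itself) to obtain directly, in logarithmic polar coordinates $\rho=\log r$,
\[
\frac{\partial s_t}{\partial\rho}=\frac{2\rho}{t}+1,\qquad \lambda\in\Sigma_t.
\]
This formula is the heart of the argument: it is what forces $\partial s_t/\partial\theta$ to be independent of $\rho$, which is exactly the mechanism producing $W_t(r,\theta)=\frac{1}{r^2}w_t(\theta)$. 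One then reads off $\partial s_t/\partial\theta$ by continuity at $\partial\Sigma_t$ from the outside value $\partial_\theta\log|\lambda-1|^2$ evaluated at $r=r_t(\theta)$, which yields the stated formula for $w_t$. Your outline does anticipate the need for ``possibly one more'' conserved quantity, but you should be aware that this conserved quantity is not merely a tool for integrating the ODEs---it is what replaces the explicit computation of $s_t$ altogether. Finally, Theorem~\ref{connectToBiane.thm} is a corollary of the density formula, not an input to it, so you should not plan to invoke it in the proof.
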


See Figure \ref{rt.fig} for the definition of $r_{t}(\theta),$ Figure
\ref{wtplots.fig} for plots of the function $w_{t}(\theta)$, and Figure
\ref{w3d.fig} for a plot of $W_{t}.$ The simple explicit dependence of $W_{t}$
on $r$ is a major surprise of our analysis. See Corollary
\ref{logDistribution.cor} for a notable consequence of the form of $W_{t}.$

Using implicit differentiation, it is possible to compute $dr_{t}%
(\theta)/d\theta$ explicitly as a function of $r_{t}(\theta).$ This
computation yields the following formula for $w_{t},$ which does not involve
differentiation:%
\[
w_{t}(\theta)=\frac{1}{2\pi t}\omega(r_{t}(\theta),\theta),
\]
where%
\begin{equation}
\omega(r,\theta)=1+h(r)\frac{\alpha(r)\cos\theta+\beta(r)}{\beta(r)\cos
\theta+\alpha(r)}, \label{omegaFormula}%
\end{equation}
and%
\[
h(r)=r\frac{\log(r^{2})}{r^{2}-1};\quad\alpha(r)=r^{2}+1-2rh(r);\quad
\beta(r)=(r^{2}+1)h(r)-2r.
\]
See Proposition 2.3 in \cite{DHKBrown}.%

\begin{figure}[ptb]%
\centering
\includegraphics[
height=2.5278in,
width=2.5278in
]%
{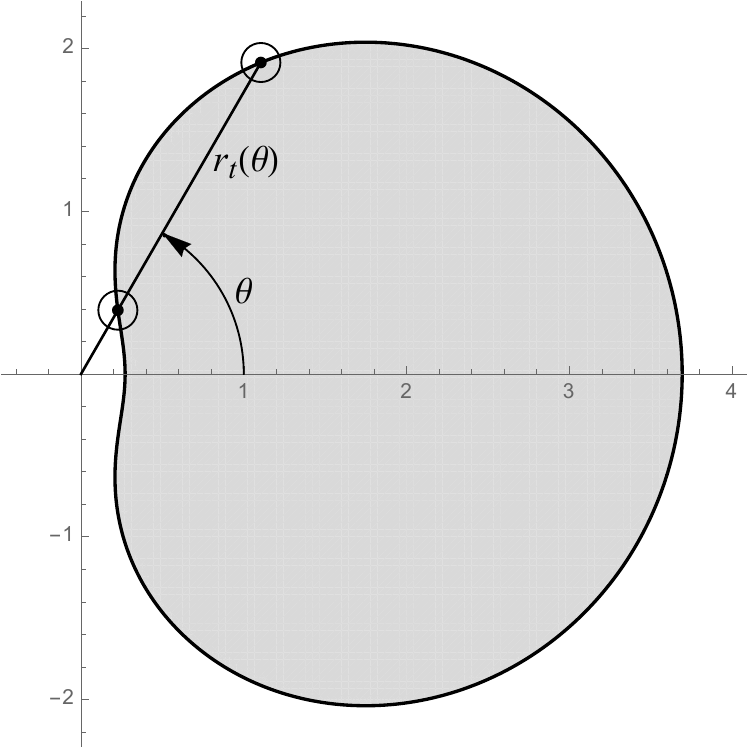}%
\caption{The quantity $r_{t}(\theta)$ is the larger of the two radii at which
the ray of angle $\theta$ intersects the boundary of $\Sigma_{t}$}%
\label{rt.fig}%
\end{figure}
%

\begin{figure}[ptb]%
\centering
\includegraphics[
height=2.5002in,
width=4.0283in
]%
{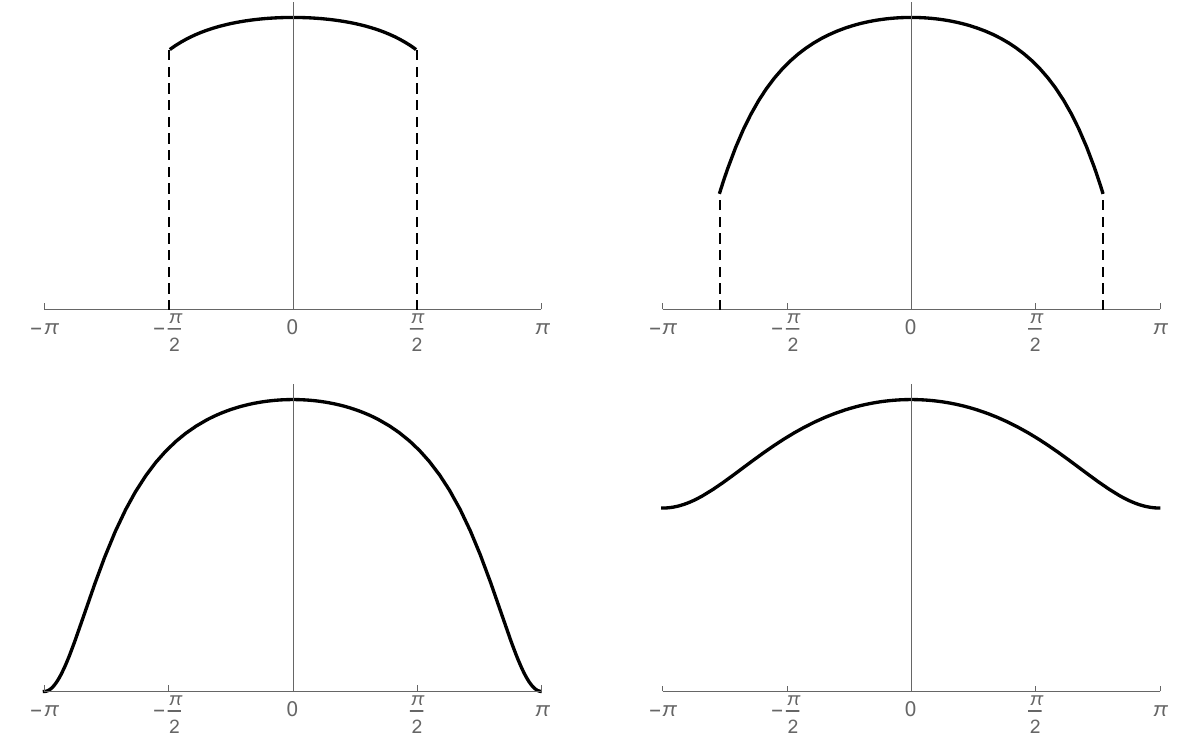}%
\caption{Plots of $w_{t}(\theta)$ for $t=2,$ $3.5,$ $4,$ and $7$}%
\label{wtplots.fig}%
\end{figure}
%

\begin{figure}[ptb]%
\centering
\includegraphics[
height=1.8351in,
width=3.0277in
]%
{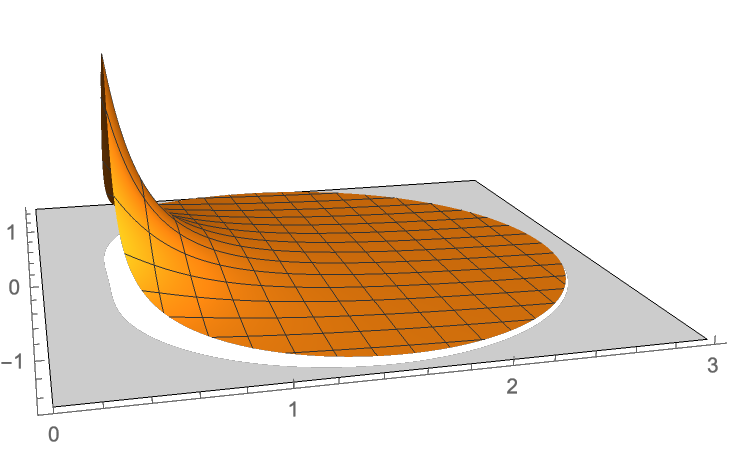}%
\caption{Plot of the density $W_{t}$ for $t=1$}%
\label{w3d.fig}%
\end{figure}

We expect that the Brown measure of $b_{t}$ will coincide with the limiting
empirical eigenvalue distribution of the Brownian motion $B_{t}^{N}$ in
$\mathsf{GL}(N;\mathbb{C}).$ This expectation is supported by simulations; see
Figure \ref{3dplotwithhist.fig}.%

\begin{figure}[ptb]%
\centering
\includegraphics[
height=1.9112in,
width=4.3863in
]%
{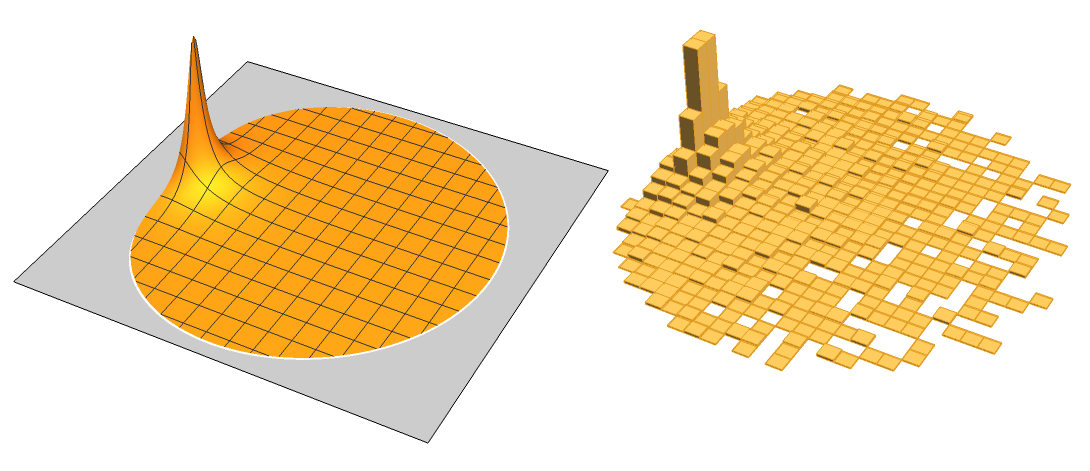}%
\caption{The density $W_{t}$ (left) and a histogram of the eigenvalues of
$B_{t}^{N}$ (right), for $t=1$ and $N=2,000$}%
\label{3dplotwithhist.fig}%
\end{figure}

We note that the Brown measure (inside $\Sigma_{t}$) can also be written as
\begin{align*}
d\mu_{b_{t}}  &  =\frac{1}{r^{2}}w_{t}(\theta)~r~dr~d\theta\\
&  =w_{t}(\theta)~\frac{1}{r}~dr~d\theta\\
&  =w_{t}(\theta)~d\log r~d\theta.
\end{align*}
Since the complex logarithm is given by $\log(re^{i\theta})=\log r+i\theta,$
we obtain the following consequence of Theorem \ref{dhkMain.thm}.

\begin{corollary}
\label{logDistribution.cor}The push-forward of the Brown measure $\mu_{b_{t}}$
under the complex logarithm has density that is constant in the horizontal
direction and given by $w_{t}$ in the vertical direction.
\end{corollary}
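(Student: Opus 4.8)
The plan is to derive Corollary~\ref{logDistribution.cor} directly from the explicit polar form of the Brown measure furnished by Theorem~\ref{dhkMain.thm}; essentially no new work is needed, since the point of the corollary is just to record what the special $r$-dependence of $W_{t}$ says after a change of variables. First I would recall that on $\Sigma_{t}$ the density of $\mu_{b_{t}}$ is $W_{t}(r,\theta)=r^{-2}w_{t}(\theta)$ while the Euclidean area element is $r\,dr\,d\theta$, so that
\[
d\mu_{b_{t}}=\frac{1}{r^{2}}w_{t}(\theta)\,r\,dr\,d\theta=w_{t}(\theta)\,\frac{dr}{r}\,d\theta=w_{t}(\theta)\,d(\log r)\,d\theta,
\]
with $\mu_{b_{t}}$ assigning no mass outside $\overline{\Sigma}_{t}$.

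The second step is to apply the complex logarithm. Writing $u=\log r$ and $v=\theta$, the map $\lambda=re^{i\theta}\mapsto\log\lambda=u+iv$ has $u=\log r$ as its real (horizontal) coordinate and $v=\theta$ as its imaginary (vertical) coordinate. Since $0\notin\overline{\Sigma}_{t}$ — from the formula (\ref{Tdef}) one has $T(\lambda)\to+\infty$ as $\lambda\to0$, so the origin lies at positive distance from $\overline{\Sigma}_{t}$ — the logarithm is a well-defined local diffeomorphism near $\overline{\Sigma}_{t}$, and the substitution $u=\log r$ turns $d(\log r)\,d\theta$ into the planar Lebesgue measure $du\,dv$. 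Consequently the push-forward of $\mu_{b_{t}}$ under $\log$ has density $w_{t}(v)$ with respect to $du\,dv$; this density is independent of $u$ — constant in the horizontal direction — and is the function $w_{t}$ of the vertical coordinate $v$, which is exactly the assertion. (Equivalently, one may test against a bounded continuous $\phi$: $\int\phi(\log\lambda)\,d\mu_{b_{t}}(\lambda)=\int\phi(u+iv)\,w_{t}(v)\,du\,dv$ after the substitution.)

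The only point requiring a little care — bookkeeping rather than a genuine obstacle — is the choice of branch of $\log$ and the resulting shape of the support. For $t\le4$ the region $\Sigma_{t}$ is simply connected and does not encircle $0$, so $\log$ is single-valued on it and $\log\Sigma_{t}$ is a bounded region of the $(u,v)$-plane on which the density $w_{t}(v)$ lives. For $t>4$ the region $\Sigma_{t}$ is doubly connected and winds once around the origin, so the logarithm should be interpreted as a map into the cylinder $\mathbb{C}/(2\pi i\mathbb{Z})$, or, equivalently, one fixes $\theta\in(-\pi,\pi]$ and allows $\log\Sigma_{t}$ to meet both vertical edges of a period strip. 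In every reading the Jacobian of $(r,\theta)\mapsto(\log r,\theta)$ is identically $1/r$, so none of this affects the density, and the content of the corollary reduces to the single observation that the $1/r^{2}$ in Theorem~\ref{dhkMain.thm} is precisely what the area element together with the logarithmic substitution absorb.
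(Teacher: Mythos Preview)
Your argument is correct and is essentially the same as the paper's: the paper simply rewrites $d\mu_{b_t}=\frac{1}{r^2}w_t(\theta)\,r\,dr\,d\theta=w_t(\theta)\,d(\log r)\,d\theta$ and then notes that $\log(re^{i\theta})=\log r+i\theta$, which is exactly your first two steps. Your additional remarks about $0\notin\overline{\Sigma}_t$ and the branch issue for $t>4$ are more careful than what the paper records, but they only elaborate the same idea.
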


In light of this corollary, we expect that for large $N,$ the logarithms of
the eigenvalues of $B_{t}^{N}$ should be approximately uniformly distributed
in the horizontal direction. This expectation is confirmed by simulations, as
in Figure \ref{evalsandlogs.fig}.%

\begin{figure}[ptb]%
\centering
\includegraphics[
height=2.2027in,
width=4.1943in
]%
{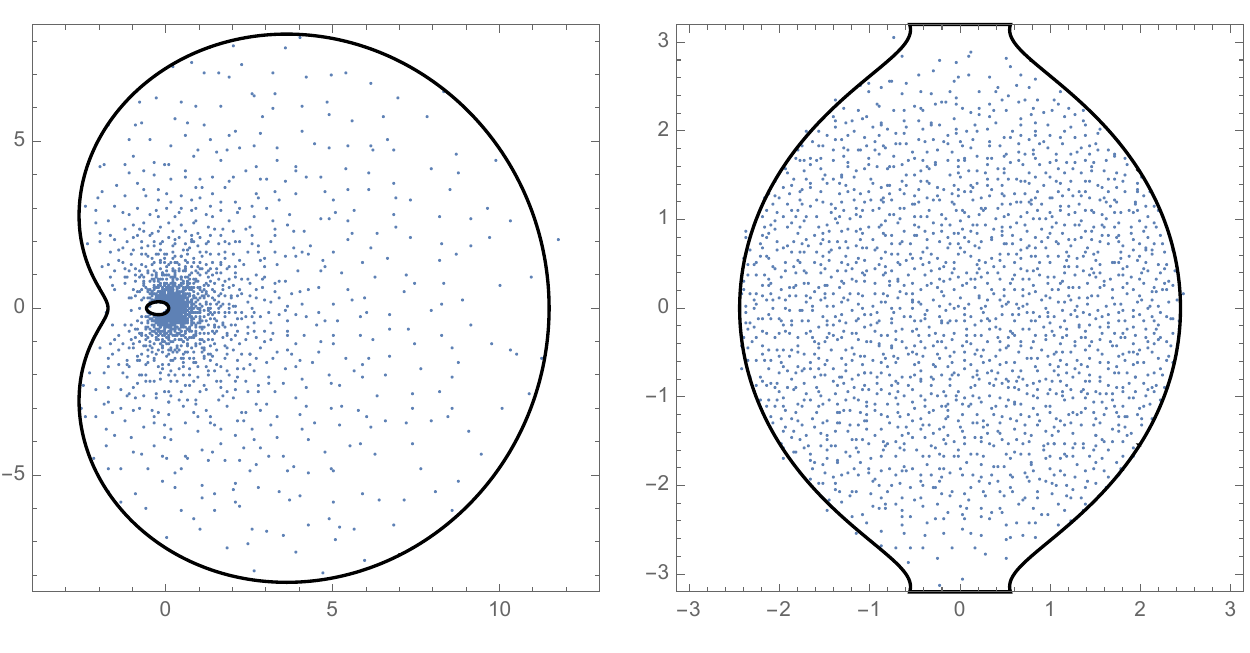}%
\caption{The eigenvalues of $B_{t}^{N}$ for $t=4.1$ and $N=2,000$ (left) and
the logarithms thereof (right). The density of points on the right-hand side
of the figure is approximately constant in the horizontal direction}%
\label{evalsandlogs.fig}%
\end{figure}

We conclude this section by describing a remarkable connection between the
Brown measure $\mu_{b_{t}}$ and the distribution $\nu_{t}$ of the free unitary
Brownian motion. Recall the holomorphic function $f_{t}$ in (\ref{ft1})\ and
(\ref{ft2}). This map takes the boundary of $\Sigma_{t}$ to the unit circle.
We ma then define a map
\[
\Phi_{t}:\overline{\Sigma}_{t}\rightarrow S^{1}%
\]
by requiring (a) that $\Phi_{t}$ should agree with $f_{t}$ on the boundary of
$\Sigma_{t},$ and (b) that $\Phi_{t}$ should be constant along each radial
segment inside $\overline{\Sigma}_{t},$ as in Figure \ref{biane.fig}. (This
specification makes sense because $f_{t}$ has the same value at the two
boundary points on each radial segment.) We then have the following result,
which may be summarized by saying that \emph{the distribution $\nu_{t}$ of
free unitary Brownian motion is a \textquotedblleft shadow\textquotedblright%
\ of the Brown measure of $b_{t}$}.%

\begin{figure}[ptb]%
\centering
\includegraphics[
height=2.1179in,
width=4.0283in
]%
{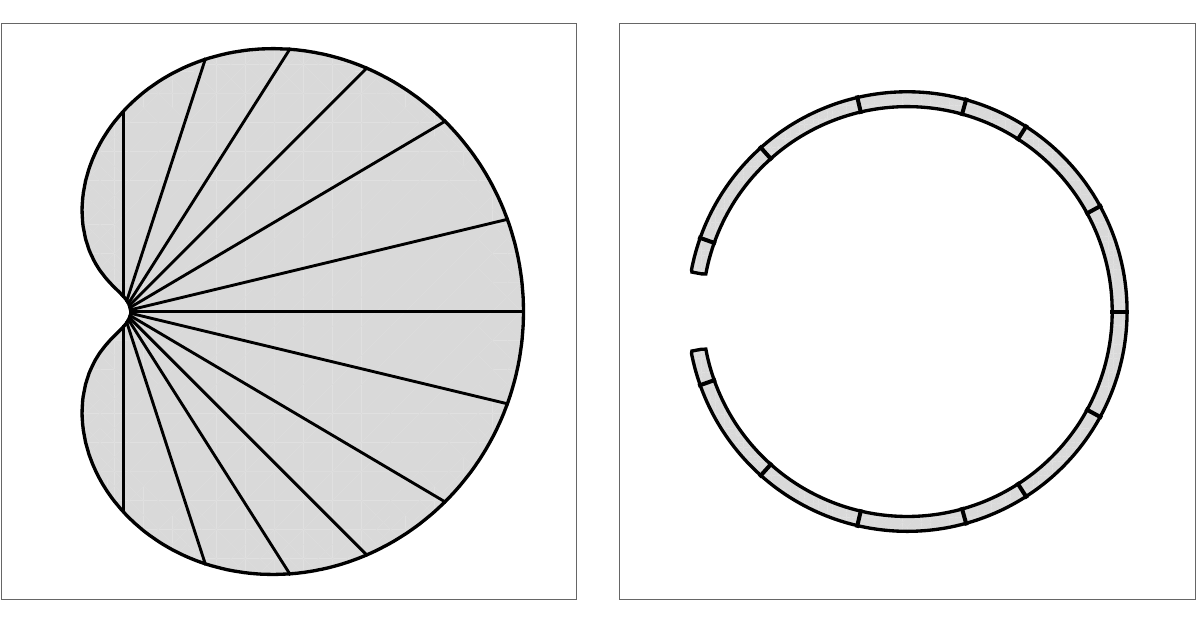}%
\caption{The map $\Phi_{t}$ maps $\overline{\Sigma}_{t}$ to the unit circle by
mapping each radial segment in $\overline{\Sigma}_{t}$ to a single point in
\thinspace$S^{1}$}%
\label{biane.fig}%
\end{figure}

\begin{theorem}
\label{connectToBiane.thm}The push-forward of the Brown measure of $b_{t}$
under the map $\Phi_{t}$ is Biane's measure $\nu_{t}$ on $S^{1}.$ Indeed, the
Brown measure of $b_{t}$ is the \emph{unique} measure $\mu$ on $\overline
{\Sigma}_{t}$ with the following two properties: (1) the push-forward of $\mu$
by $\Phi_{t}$ is $\nu_{t}$ and (2) $\mu$ is absolutely continuous with respect
to Lebesgue measure with a density $W$ having the form%
\[
W(r,\theta)=\frac{1}{r^{2}}g(\theta)
\]
in polar coordinates, for some continuous function $g.$
\end{theorem}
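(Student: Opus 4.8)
The plan is to reduce the theorem to the explicit description of $\mu_{b_t}$ in Theorem \ref{dhkMain.thm} and then run a change of variables that converts the radial geometry of $\Sigma_t$, together with the boundary map $f_t$, into Biane's measure $\nu_t$. First I would put the Brown measure in log-polar form: since $W_t(r,\theta)=r^{-2}w_t(\theta)$ and $dA=r\,dr\,d\theta$, Theorem \ref{dhkMain.thm} (equivalently, Corollary \ref{logDistribution.cor}) gives $d\mu_{b_t}=w_t(\theta)\,d(\log r)\,d\theta$ on $\Sigma_t$. Next I would record the inversion symmetry $T(1/\bar\lambda)=T(\lambda)$, which follows from $|1/\bar\lambda-1|^2=|\lambda-1|^2/|\lambda|^2$ together with $\log(1/|\lambda|^2)/(1/|\lambda|^2-1)=|\lambda|^2\log(|\lambda|^2)/(|\lambda|^2-1)$. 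Hence $\Sigma_t$ is invariant under inversion in the unit circle, so the ray of angle $\theta$ meets it in the interval $\{re^{i\theta}:1/r_t(\theta)<r<r_t(\theta)\}$, of $\log r$-length $2\log r_t(\theta)$; integrating out the radial variable, the angular marginal of $\mu_{b_t}$ is $2w_t(\theta)\log r_t(\theta)\,d\theta$.

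Second, I would make $\Phi_t$ explicit in the angular variable. Using $\operatorname{Re}\frac{1+\lambda}{1-\lambda}=\frac{1-|\lambda|^2}{|1-\lambda|^2}$, one checks that $|f_t(\lambda)|^2=|\lambda|^2e^{t(1-|\lambda|^2)/|1-\lambda|^2}=1$ is exactly the equation $T(\lambda)=t$; so $f_t$ maps $\partial\Sigma_t$ into $S^1$, and writing $\Phi_t(re^{i\theta})=e^{i\psi_t(\theta)}$ with $\psi_t(\theta)=\arg f_t(r_t(\theta)e^{i\theta})$ gives
\[
\psi_t(\theta)=\theta+\frac{t}{2}\,\frac{2r_t(\theta)\sin\theta}{r_t(\theta)^2+1-2r_t(\theta)\cos\theta}.
\]
Comparing this with the formula for $w_t$ in Theorem \ref{dhkMain.thm} yields the key identity $\psi_t'(\theta)=2\pi t\,w_t(\theta)$. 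Substituting $\psi=\psi_t(\theta)$ in the angular marginal from the previous paragraph, the pushforward $(\Phi_t)_*\mu_{b_t}$ has, at $e^{i\psi_t(\theta)}$, arc-length density $\dfrac{2w_t(\theta)\log r_t(\theta)}{\psi_t'(\theta)}=\dfrac{\log r_t(\theta)}{\pi t}$.

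It then remains to identify this with $\nu_t$. Solving $T(\lambda)=t$ along the ray of angle $\theta$ expresses $\cos\theta$ and $\psi_t(\theta)$ in terms of the single parameter $\rho:=r_t(\theta)\ge 1$, namely $\rho^2+1-2\rho\cos\theta=t(\rho^2-1)/\log(\rho^2)$ and $\psi_t(\theta)=\theta+\rho\sin\theta\,\log(\rho^2)/(\rho^2-1)$, where $\rho$ runs over $[1,r_t(0)]$ and the extreme angle $\theta_t^{\ast}$ satisfies $\cos\theta_t^{\ast}=1-t/2$ (consistent with Theorem \ref{bianeSupport.thm}). I would then verify against Biane's explicit parametrization of the support and density of $\nu_t$ from \cite{BianeFields} that the density $\log\rho/(\pi t)$ at $e^{i\psi_t(\theta)}$ is precisely $d\nu_t$. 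This final algebraic matching — reconciling the geometry of $\Sigma_t$ with Biane's formulas — is the step I expect to be the main obstacle; the support result of \cite{HK} and the conformal map $f_t$ of (\ref{ft2}) at least guarantee that the two measures are carried on the same subset of $S^1$.

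Finally, for uniqueness I would observe that every step above used only the form $W=r^{-2}g(\theta)$: for any probability measure $\mu$ on $\overline{\Sigma}_t$ of this form, the angular marginal is $2g(\theta)\log r_t(\theta)\,d\theta$, and since $\Phi_t$ factors through the map $\theta\mapsto\psi_t(\theta)$, the pushforward $(\Phi_t)_*\mu$ has density $2g(\theta)\log r_t(\theta)/\psi_t'(\theta)$ at $e^{i\psi_t(\theta)}$. Because $\psi_t'=2\pi t\,w_t>0$ (positivity of $w_t$, Theorem \ref{dhkMain.thm}), $\theta\mapsto\psi_t(\theta)$ is a strictly increasing bijection onto the angular support of $\nu_t$, and $\log r_t(\theta)>0$ in the interior; hence the condition $(\Phi_t)_*\mu=\nu_t$ forces $g=w_t$, so $\mu=\mu_{b_t}$.
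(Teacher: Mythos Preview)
The paper does not actually prove this theorem; after the statement it simply records ``This result is Proposition 2.6 in \cite{DHKBrown}.'' So there is no in-text argument to compare against. That said, your outline is the natural one and is, in fact, the route taken in \cite{DHKBrown}: pass to the log-polar form $d\mu_{b_t}=w_t(\theta)\,d(\log r)\,d\theta$, use the inversion symmetry $T(1/\bar\lambda)=T(\lambda)$ to get the radial interval $(1/r_t(\theta),r_t(\theta))$, observe the identity $\psi_t'(\theta)=2\pi t\,w_t(\theta)$ from the formula for $w_t$ in Theorem~\ref{dhkMain.thm}, and conclude that $(\Phi_t)_*\mu_{b_t}$ has arc-length density $\dfrac{\log r_t(\theta)}{\pi t}$ at $e^{i\psi_t(\theta)}$. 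Your uniqueness argument is also correct: for any $\mu$ with density $r^{-2}g(\theta)$, the same computation gives pushforward density $2g(\theta)\log r_t(\theta)/\psi_t'(\theta)$, and since $\psi_t'>0$ and $\log r_t(\theta)>0$ on the open angular range (with continuity of $g$ handling the endpoints when $t<4$), the condition $(\Phi_t)_*\mu=\nu_t$ pins down $g=w_t$.

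The one real gap is exactly the one you flag: matching $\dfrac{\log r_t(\theta)}{\pi t}$ at $e^{i\psi_t(\theta)}$ with Biane's density. This is not just bookkeeping---it is the substantive content linking the geometry of $\Sigma_t$ to $\nu_t$, and your write-up stops short of doing it. The cleanest way to close it is not to try to massage Biane's closed-form density directly, but to use Biane's \emph{parametric} description: writing the boundary point as $\lambda=r_t(\theta)e^{i\theta}$ and using $T(\lambda)=t$ to eliminate $\cos\theta$, one obtains exactly Biane's parametrization of $\mathrm{supp}(\nu_t)$ via $\phi=\psi_t(\theta)$, and the density $\kappa_t(\phi)$ in \cite{BianeFields} is expressed as a function of the same parameter that reduces to $\frac{1}{\pi t}\log r_t(\theta)$. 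Alternatively (and this is how \cite{DHKBrown} organizes it), one can identify $\log r_t(\theta)$ with $\frac{t}{2}\operatorname{Re}\frac{1+\lambda}{1-\lambda}$ on $\partial\Sigma_t$ via the equation $|f_t(\lambda)|=1$, which recasts the pushforward density as $\frac{1}{2\pi}\operatorname{Re}\frac{1+\lambda}{1-\lambda}$ and matches it to the boundary values of the Herglotz transform that Biane computes. Either way, you should carry this step out rather than leave it as an expectation.
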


This result is Proposition 2.6 in \cite{DHKBrown}. Figure \ref{bianeevals.fig}
shows the eigenvalues for $B_{t}^{N}$ after applying the map $\Phi_{t},$
plotted against the density of Biane's measure $\nu_{t}.$ We emphasize that we
have computed the eigenvalues of the Brownian $B_{t}^{N}$ motion in
$\mathsf{GL}(N;\mathbb{C})$ (in the two-dimensional region $\Sigma_{t}$) and
then mapped these points to the unit circle. The resulting histogram, however,
looks precisely like a histogram of the eigenvalues of the Brownian motion in
$\mathsf{U}(N).$%

\begin{figure}[ptb]%
\centering
\includegraphics[
height=2.0609in,
width=3.0277in
]%
{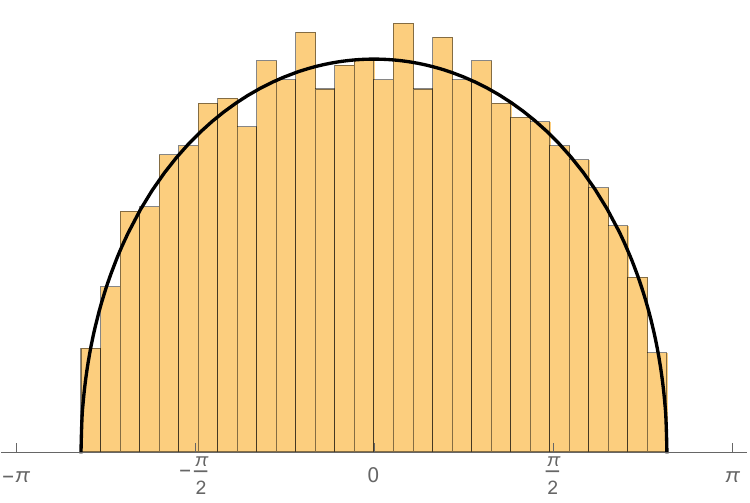}%
\caption{The eigenvalues of $B_{t}^{N}$, mapped to the unit circle by
$\Phi_{t},$ plotted against the density of Biane's measure $\nu_{t}.$ Shown
for $t=2$ and $N=2,000$}%
\label{bianeevals.fig}%
\end{figure}

\subsection{The PDE and its solution}

We conclude this article by briefly outlining the methods used to obtain the
results in the previous subsection.

\subsubsection{The PDE}

Following the definition of the Brown measure in Theorem
\ref{BrownMeasure.thm}, we consider the function
\begin{equation}
S(t,\lambda,\varepsilon):=\tau\lbrack\log((b_{t}-\lambda)^{\ast}(b_{t}%
-\lambda)+\varepsilon)].\label{Smult}%
\end{equation}
We then record the following result \cite[Theorem 2.8]{DHKBrown}.

\begin{theorem}
\label{thePDE.thm}The function $S$ in (\ref{Smult}) satisfies the following
PDE:%
\begin{equation}
\frac{\partial S}{\partial t}=\varepsilon\frac{\partial S}{\partial
\varepsilon}\left(  1+(\left\vert \lambda\right\vert ^{2}-\varepsilon
)\frac{\partial S}{\partial\varepsilon}-a\frac{\partial S}{\partial a}%
-b\frac{\partial S}{\partial b}\right)  ,\quad\lambda=a+ib,\label{thePDE}%
\end{equation}
with the initial condition%
\begin{equation}
S(0,\lambda,\varepsilon)=\log(\left\vert \lambda-1\right\vert ^{2}%
+\varepsilon).\label{SinitialCond}%
\end{equation}

\end{theorem}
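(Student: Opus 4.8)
The plan is to follow the strategy of the proof of Theorem~\ref{circularPDE.thm}: express the $t$-derivative of $S$ via free stochastic calculus, now starting from the free stochastic differential equation for the free multiplicative Brownian motion, $db_t=b_t\,dc_t$ with $b_0=1$, where $c_t$ is a free circular Brownian motion and the It\^o rules are (\ref{Ito1})--(\ref{Ito4}) (see \cite{KempLargeN}). As in the circular case, I would establish the PDE first for $x$ in the right half-plane with $|x|$ large, by expanding
\[
\log\bigl((b_t-\lambda)^{*}(b_t-\lambda)+x\bigr)=\log x+\sum_{n\ge 1}\frac{(-1)^{n-1}}{n\,x^{n}}\bigl((b_t-\lambda)^{*}(b_t-\lambda)\bigr)^{n},
\]
applying $\tau$, and differentiating the series in $t$ term by term; the local boundedness of $\|b_t\|$ (from \cite{BS1}, \cite{KempLargeN}) legitimizes the interchange of summation and $t$-derivative exactly as in the circular argument, and one then extends the identity to all $x$ with $\operatorname{Re}x>0$, hence to $x>0$, by checking holomorphy of both sides in $x$. (A heuristic finite-$N$ counterpart, based on the heat equation on $\mathsf{GL}(N;\mathbb{C})$ for the metric of Section~\ref{finiteN.sec} together with a covariance term that vanishes in the large-$N$ limit, would play the same motivating role as Proposition~\ref{finiteN.prop}.)

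For the core computation, abbreviate $\beta:=b_t-\lambda$ (so $d\beta=db_t=b_t\,dc_t$ and $d\beta^{*}=dc_t^{*}b_t^{*}$), $A:=\beta^{*}\beta+x$, $R:=A^{-1}$, and $P:=b_t^{*}b_t$. The It\^o product rule gives
\[
dA=dc_t^{*}b_t^{*}\beta+\beta^{*}b_t\,dc_t+\tau(P)\,dt,
\]
the last term arising from $d\beta^{*}\,d\beta=dc_t^{*}(b_t^{*}b_t)\,dc_t=\tau(P)\,dt$ by (\ref{Ito1}). Applying the trace It\^o formula to $\tau[\log A]$---whose first-order term is $\tau[A^{-1}\,dA]$ and whose quadratic-variation correction is $-\frac{1}{2}\tau[A^{-1}\,dM\,A^{-1}\,dM]$, with $dM$ the martingale part of $dA$---the single-$dc_t$ and single-$dc_t^{*}$ pieces vanish under $\tau$ by (\ref{Ito4}), the $dc_t\cdots dc_t$ and $dc_t^{*}\cdots dc_t^{*}$ pieces vanish by (\ref{Ito2}), and the remaining $dc_t^{*}(\,\cdot\,)dc_t$ and $dc_t(\,\cdot\,)dc_t^{*}$ pieces collapse via (\ref{Ito1}) and the cyclicity of $\tau$, leaving
\[
\frac{\partial S}{\partial t}=\tau(R)\Bigl(\tau(P)-\tau\bigl(b_t^{*}\beta\,R\,\beta^{*}b_t\bigr)\Bigr).
\]

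The decisive simplification is the push-through identity $\beta(\beta^{*}\beta+x)^{-1}=(\beta\beta^{*}+x)^{-1}\beta$, which gives $\beta R\beta^{*}=1-x\widetilde R$ with $\widetilde R:=(\beta\beta^{*}+x)^{-1}$. Hence $b_t^{*}\beta R\beta^{*}b_t=P-x\,b_t^{*}\widetilde R\,b_t$ and therefore $\partial S/\partial t=x\,\tau(R)\,\tau(\widetilde R\,b_tb_t^{*})$. Writing $b_tb_t^{*}=(\beta\beta^{*}+x)-x+\lambda\beta^{*}+\bar\lambda\beta+|\lambda|^{2}$ and using (i) $\tau(\widetilde R)=\tau(R)=\partial S/\partial x$, since $\beta^{*}\beta$ and $\beta\beta^{*}$ have the same $\tau$-distribution, (ii) the push-through identity again to get $\tau(\widetilde R\beta)=\tau(R\beta)$ and $\tau(\widetilde R\beta^{*})=\tau(R\beta^{*})$, and (iii) $\tau(R\beta)=-\partial S/\partial\bar\lambda$, $\tau(R\beta^{*})=-\partial S/\partial\lambda$, which follow from $\partial_{\bar\lambda}(\beta^{*}\beta)=-\beta$, $\partial_{\lambda}(\beta^{*}\beta)=-\beta^{*}$ and the operator form of (\ref{logDeriv}), one is led to
\[
\tau(\widetilde R\,b_tb_t^{*})=1+(|\lambda|^{2}-x)\frac{\partial S}{\partial x}-\lambda\frac{\partial S}{\partial\lambda}-\bar\lambda\frac{\partial S}{\partial\bar\lambda}.
\]
Since $\lambda\partial_{\lambda}+\bar\lambda\partial_{\bar\lambda}=a\partial_{a}+b\partial_{b}$ is the radial (Euler) vector field, this is precisely the parenthesized factor in (\ref{thePDE}), so $\partial S/\partial t=x\,\partial_{x}S\,\tau(\widetilde R\,b_tb_t^{*})$ is the claimed PDE; the initial condition (\ref{SinitialCond}) is immediate from $b_0=1$.

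The main obstacle is not the formal computation above but, first, the rigor: the trace It\^o formula for $\tau\circ\log$ is being applied to an operator-valued semimartingale that is \emph{not} a polynomial, so---just as in Theorem~\ref{circularPDE.thm}, though a bit more delicately, since $d\beta=b_t\,dc_t$ carries the extra factor $b_t$---one must route through the $1/x$-power series, the termwise $t$-differentiation and resummation, and a holomorphic extension in $x$, and one also needs the operator version of the logarithmic-derivative identity (\ref{logDeriv}) from \cite{Br}. Second, and more conceptually, the ``extra'' structure distinguishing (\ref{thePDE}) from the additive PDE $\partial_t S=x(\partial_x S)^{2}$ only becomes visible once one recognizes that the push-through identity converts the resolvent $(\beta\beta^{*}+x)^{-1}$ of the ``wrong'' positive operator back into $x$- and $\lambda$-derivatives of $S$ (which is built from $(\beta^{*}\beta+x)^{-1}$), and that the resulting correction reorganizes exactly into the Euler operator $a\partial_a+b\partial_b$; identifying this chain of manipulations, rather than carrying it out, is the heart of the proof (as in \cite{DHKBrown}).
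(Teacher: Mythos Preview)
The paper does not actually prove Theorem~\ref{thePDE.thm}; it simply records the statement and cites \cite[Theorem~2.8]{DHKBrown}. Your proposal is correct and is precisely the argument of \cite{DHKBrown}: one runs the free-stochastic-calculus proof of Theorem~\ref{circularPDE.thm} with the multiplicative SDE $db_t=b_t\,dc_t$ in place of $dc_t$, and the extra factors of $b_t$ are absorbed via the push-through identity $\beta(\beta^{*}\beta+x)^{-1}=(\beta\beta^{*}+x)^{-1}\beta$ and the expansion $b_tb_t^{*}=(\beta\beta^{*}+x)-x+\lambda\beta^{*}+\bar\lambda\beta+|\lambda|^{2}$, yielding exactly the Euler-operator term $a\partial_aS+b\partial_bS$; your handling of the rigor (power-series expansion for large $|x|$, termwise differentiation justified by local boundedness of $\|b_t\|$, then holomorphic extension in $x$) also mirrors that reference.
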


Recall that in the case of the circular Brownian motion (the PDE in Theorem
\ref{circularPDE.thm}), the complex number $\lambda$ enters only into the
initial condition and not into the PDE itself. By contrast, the right-hand
side of the PDE (\ref{thePDE}) involves differentiation with respect to the
real and imaginary parts of $\lambda.$

On the other hand, the PDE (\ref{thePDE}) is again of Hamilton--Jacobi type.
Thus, following the general Hamilton--Jacobi method in Section
\ref{hjMethod.sec}, we define a Hamiltonian function $H$ from (the negative
of) the right-hand side of (\ref{thePDE}), replacing each derivative of $S$ by
a corresponding momentum variable:
\begin{equation}
H(a,b,\varepsilon,p_{a},p_{b},p_{\varepsilon})=-\varepsilon p_{\varepsilon
}(1+(a^{2}+b^{2})p_{\varepsilon}-\varepsilon p_{\varepsilon}-ap_{a}%
-bp_{b}).\label{theHamiltonian}%
\end{equation}
We then consider Hamilton's equations for this Hamiltonian:%
\begin{align}
\frac{da}{dt} &  =\frac{\partial H}{\partial p_{a}};\quad~~\frac{db}{dt}%
=\frac{\partial H}{\partial p_{b}};\quad~~~\frac{d\varepsilon}{dt}%
=\frac{\partial H}{\partial p_{\varepsilon}};\nonumber\\
\frac{dp_{a}}{dt} &  =-\frac{\partial H}{\partial a};\quad\frac{dp_{b}}%
{dt}=-\frac{\partial H}{\partial b};\quad\frac{dp_{\varepsilon}}{dt}%
=-\frac{\partial H}{\partial\varepsilon}.\label{theODEs}%
\end{align}
Then, after a bit of simplification, the general Hamilton--Jacobi formula in
(\ref{HJformulaGen}) then takes the form%
\begin{align}
S(t,\lambda(t),\varepsilon(t)) &  =\log(\left\vert \lambda_{0}-1\right\vert
^{2}+\varepsilon_{0})-\frac{\varepsilon_{0}t}{(\left\vert \lambda
_{0}-1\right\vert ^{2}+\varepsilon_{0})^{2}}\nonumber\\
&  +\log\left\vert \lambda(t)\right\vert -\log\left\vert \lambda
_{0}\right\vert \text{.}\label{SmultFormula}%
\end{align}
(See Theorem 6.2 in \cite{DHKBrown}.)

The analysis in \cite{DHKBrown} then proceeds along broadly similar lines to
those in Sections \ref{solving.sec} and \ref{xToZero.sec}. The main structural
difference is that because $\lambda$ is now a variable in the PDE, the ODE's
in (\ref{theODEs}) now involve both $\lambda$ and $\varepsilon,$ and the
associated momenta. (That is to say, the vector $\mathbf{x}$ in Proposition
\ref{HJgeneral.prop} is equal to $(\lambda,\varepsilon)\in\mathbb{C}%
\times\mathbb{R}\cong\mathbb{R}^{3}.$) The first key result is that the system
of ODE's associated to (\ref{thePDE}) can be solved explicitly; see Section
6.3 of \cite{DHKBrown}. Solving the ODE's gives an implicit formula for the
solution to (\ref{thePDE}) with the initial conditions (\ref{SinitialCond}).

We then evaluate the solution in the limit as $\varepsilon$ tends to zero. We
follow the strategy in Section \ref{xToZero.sec}. Given a time $t$ and a
complex number $\lambda,$ we attempt to choose initial conditions
$\varepsilon_{0}$ and $\lambda_{0}$ so that $\varepsilon(t)$ will be very
close to zero and $\lambda(t)$ will equal $\lambda.$ (Recall that the initial
momenta in the system of ODE's are determined by the positions by
(\ref{HJinit}).)

\subsubsection{Outside the domain}

As in the case of the circular Brownian motion, we use different approaches
for $\lambda$ outside $\Sigma_{t}$ and for $\lambda$ in $\Sigma_{t}.$ For
$\lambda$ outside $\Sigma_{t},$ we allow the initial condition $\varepsilon
_{0}$ in the ODE's to approach zero. As it turns out, when $\varepsilon_{0}$
is small and positive, $\varepsilon(t)$ remains small and positive for as long
as the solution to the system exists. Furthermore, when $\varepsilon_{0}$ is
small and positive, $\lambda(t)$ is approximately constant. Thus, our strategy
will be to take $\varepsilon_{0}\approx0$ and $\lambda_{0}\approx\lambda.$

A key result is the following.

\begin{proposition}
\label{lifetime.prop} In the limit as $\varepsilon_{0}$ tends to zero, the
lifetime of the solution to (\ref{theODEs}) with initial conditions
$\lambda_{0}$ and $\varepsilon_{0}$---and initial moment determined by
(\ref{HJinit})---approaches $T(\lambda_{0}),$ where $T$ is the same function
(\ref{Tdef}) that enters into the definition of the domain $\Sigma_{t}.$
\end{proposition}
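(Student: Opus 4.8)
The plan is to collapse the six‑dimensional Hamiltonian system (\ref{theODEs}) onto a single scalar Riccati equation for $p_{x}$, identify the lifetime with the blow‑up time of that equation, and then compute that blow‑up time explicitly in the limit $x_{0}\to0^{+}$. Throughout write $\lambda=a+ib$, $r^{2}=a^{2}+b^{2}=|\lambda|^{2}$, and $q=ap_{a}+bp_{b}$, and record the initial momenta supplied by (\ref{HJinit}) applied to (\ref{SinitialCond}), namely $p_{x}(0)=1/(|\lambda_{0}-1|^{2}+x_{0})$ and $(p_{a}(0),p_{b}(0))=2(a_{0}-1,b_{0})/(|\lambda_{0}-1|^{2}+x_{0})$.

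First I would extract three structural identities by differentiating along solutions of (\ref{theODEs}) with $H$ as in (\ref{theHamiltonian}): (i) $\dot{\lambda}=xp_{x}\lambda$, so $\arg\lambda$ is constant and $|\lambda(t)|=|\lambda_{0}|\exp\!\int_{0}^{t}xp_{x}\,ds$; (ii) $q+2xp_{x}$ is a constant of motion, with value $Q:=q(0)+2x_{0}p_{x}(0)$; and (iii) after substituting $q=Q-2xp_{x}$, a cancellation leaves the exact Riccati equation
\[
\frac{dp_{x}}{dt}=(1-Q)p_{x}+|\lambda|^{2}p_{x}^{2}.
\]
I would also note $p_{x}(t)\sqrt{x(t)}=p_{x}(0)\sqrt{x_{0}}\,e^{(1-Q)t/2}$, so that $x(t)\to0$ and $p_{x}(t)\to\infty$ happen simultaneously and the remaining momenta stay bounded; hence for $x_{0}>0$ the lifetime is exactly the first time $x(t)$ reaches $0$, equivalently the blow‑up time of $p_{x}$.

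Next I would run the limit $x_{0}\to0^{+}$. Since $p_{x}(0)\sqrt{x_{0}}=O(\sqrt{x_{0}})$, the identity for $p_{x}\sqrt{x}$ forces $xp_{x}\to0$ locally uniformly, whence $|\lambda(t)|\to|\lambda_{0}|$ and $Q\to Q_{0}:=2(|\lambda_{0}|^{2}-a_{0})/|\lambda_{0}-1|^{2}$. The Riccati equation degenerates to $dp_{x}/dt=(1-Q_{0})p_{x}+|\lambda_{0}|^{2}p_{x}^{2}$ with $p_{x}(0)=1/|\lambda_{0}-1|^{2}$; solving it in closed form gives blow‑up time $t^{\ast}=\frac{1}{1-Q_{0}}\log\!\bigl(1+(1-Q_{0})|\lambda_{0}-1|^{2}/|\lambda_{0}|^{2}\bigr)$. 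The elementary identity $1-Q_{0}=(1-|\lambda_{0}|^{2})/|\lambda_{0}-1|^{2}$ (from $|\lambda_{0}-1|^{2}=|\lambda_{0}|^{2}-2a_{0}+1$) collapses the logarithm's argument to $|\lambda_{0}|^{-2}$, so $t^{\ast}=|\lambda_{0}-1|^{2}\log(|\lambda_{0}|^{2})/(|\lambda_{0}|^{2}-1)=T(\lambda_{0})$; the case $|\lambda_{0}|=1$ follows by the limiting convention in (\ref{Tdef}).

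The hard part is promoting this formal degeneration to a rigorous claim that the lifetime $T_{x_{0}}(\lambda_{0})$ converges to $T(\lambda_{0})$: the perturbation is singular, since as $x_{0}\to0$ the whole trajectory collapses onto $\{x=0\}$ and the lifetime must be controlled uniformly up to the blow‑up. The cleanest route is to sidestep the asymptotics and invoke the closed‑form solution of (\ref{theODEs}) from Section~6.3 of \cite{DHKBrown}, which yields $T_{x_{0}}(\lambda_{0})$ as an explicit function of $x_{0}$ and $\lambda_{0}$ whose $x_{0}\to0^{+}$ limit is then a direct computation---exactly as in the circular case of Section~\ref{xToZero.sec}, where the lifetime $|\lambda|^{2}+x_{0}$ was written down and its limit $|\lambda|^{2}=T(\lambda)$ read off. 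Alternatively one can make the singular‑perturbation argument itself rigorous by setting $\xi=\sqrt{x}$, which turns the equation for $x$ into the \emph{linear} equation $\dot{\xi}=\tfrac12(Q-1)\xi-r^{2}p_{x}(0)\sqrt{x_{0}}\,e^{(1-Q)t/2}$; its first zero depends continuously on the small parameter $\sqrt{x_{0}}$ and on the coefficient $r^{2}(t)\to|\lambda_{0}|^{2}$, and a Gr\"onwall/continuous‑dependence estimate on a time interval slightly shorter than $T(\lambda_{0})$ then closes the argument.
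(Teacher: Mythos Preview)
The paper does not itself prove Proposition~\ref{lifetime.prop}; it merely states the result and cites Proposition~6.13 of \cite{DHKBrown}. So there is no in-paper argument to compare against directly. That said, your reduction is correct: the identities $\dot\lambda=xp_x\lambda$, the conservation of $q+2xp_x$ (which is exactly $2\Psi$ in the paper's notation), the Riccati equation $\dot p_x=(1-Q)p_x+|\lambda|^2p_x^2$, and the invariant $p_x\sqrt{x}=p_x(0)\sqrt{x_0}\,e^{(1-Q)t/2}$ all check, as does the algebra collapsing the limiting blow-up time to $T(\lambda_0)$. Your first proposed rigorous route---read off the exact lifetime from the closed-form solutions in Section~6.3 of \cite{DHKBrown} and then let $x_0\to0^+$---is precisely how the cited reference establishes the result, so in that sense your proposal aligns with the intended proof.

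Your second route, via $\xi=\sqrt{x}$, is an interesting alternative, but be aware that the equation you call ``linear'' is not fully decoupled: the coefficient $r^2(t)=|\lambda(t)|^2$ still depends on the trajectory through $|\lambda(t)|=|\lambda_0|\exp\int_0^t xp_x\,ds$, so you are really running a bootstrap (bound $\xi$ to bound $xp_x$ to bound $r^2$ to feed back into the $\xi$-equation). This can certainly be closed with a Gr\"onwall argument on $[0,t_0]$ for any $t_0<T(\lambda_0)$, giving the lower bound on the lifetime; for the matching upper bound you must also argue that $p_x$ actually blows up before any $t_0>T(\lambda_0)$, e.g.\ by comparing the true Riccati equation with the constant-coefficient one using $|\lambda(t)|^2\to|\lambda_0|^2$. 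None of this is hard, but it is more than a one-line continuous-dependence citation, and the explicit-solution route avoids it entirely.
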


This result is Proposition 6.13 in \cite{DHKBrown}. Thus, the strategy in the
previous paragraph will work---meaning that the solution continues to exist up
to time $t$---provided that $T(\lambda_{0})\approx T(\lambda)$ is greater than
$t.$ The condition for success of the strategy is, therefore, $T(\lambda)>t.$
In light of the characterization of $\Sigma_{t}$ in Definition \ref{sigma.def}%
, we make have the following conclusion.

\begin{conclusion}
The simple strategy of taking $\varepsilon_{0}\approx0$ and $\lambda
_{0}\approx\lambda$ is successful precisely if $T(\lambda)>t,$ or
equivalently, if $\lambda$ is outside $\overline{\Sigma}_{t}.$
\end{conclusion}

When this strategy works, we obtain a simple expression for $\lim
_{\varepsilon\rightarrow0^{+}}S(t,\lambda,\varepsilon),$ by letting
$\varepsilon_{0}$ approach zero and $\lambda_{0}$ approach $\lambda$ in
(\ref{SmultFormula}). Since $\lambda(t)$ approaches $\lambda$ in this limit
\cite[Proposition 6.11]{DHKBrown}, we find that
\begin{equation}
\lim_{\varepsilon\rightarrow0^{+}}S(t,\lambda,\varepsilon)=\log(\left\vert
\lambda-1\right\vert ^{2}),\quad\lambda\notin\overline{\Sigma}_{t}%
.\label{stOutside}%
\end{equation}
This function is harmonic (except at $\lambda=1,$ which is always in the
domain $\Sigma_{t}$), so we conclude that \textit{the Brown measure of }%
$b_{t}$\textit{ is zero outside} $\overline{\Sigma}_{t}.$ See Section 7.2 in
\cite{DHKBrown} for more details.

\subsubsection{Inside the domain}

For $\lambda$ inside $\Sigma_{t},$ the simple approach in the previous
subsection does not work, because when $\lambda$ is outside $\Sigma_{t}$ and
$\varepsilon_{0}$ is small, the solutions to the ODE's (\ref{theODEs}) will
cease to exist prior to time $t$ (Proposition \ref{lifetime.prop}). Instead,
we must prove a \textquotedblleft surjectivity\textquotedblright\ result: For
each $t>0$ and $\lambda\in\Sigma_{t},$ there exist---in principle---$\lambda
_{0}\in\mathbb{C}$ and $\varepsilon_{0}>0$ giving $\lambda(t)=\lambda$ and
$\varepsilon(t)=0.$ See Figure \ref{airplane.fig}. Actually the proof shows
that $\lambda_{0}$ again belongs to the domain $\Sigma_{t}$; see Section 6.5
in \cite{DHKBrown}.%

\begin{figure}[ptb]%
\centering
\includegraphics[
height=2.2943in,
width=4.0274in
]%
{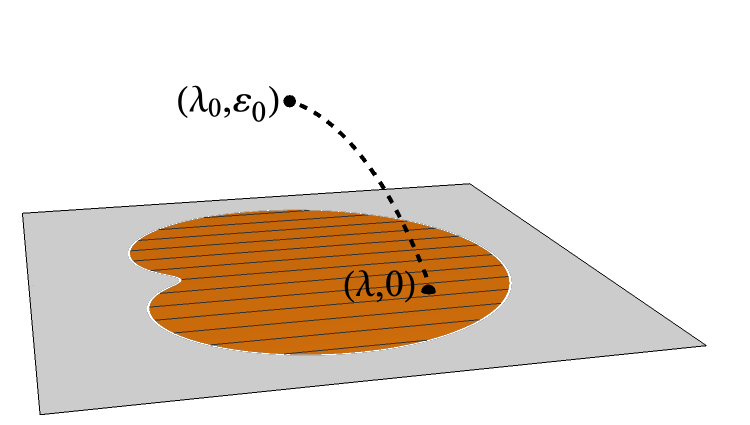}%
\caption{For each $\lambda$ in $\Sigma_{t,}$ there exists $\varepsilon_{0}>0$
and $\lambda_{0}\in\Sigma_{t}$ such that with these initial conditions, we
have $\varepsilon(t)=0$ and $\lambda(t)=\lambda$}%
\label{airplane.fig}%
\end{figure}

We then make use of the second Hamilton--Jacobi formula (\ref{derivFormulaGen}%
), which allows us to compute the derivatives of \thinspace$S$ directly,
without having to attempt to differentiate the formula (\ref{SmultFormula})
for $S.$ Working in logarithmic polar coordinates, $\rho=\log\left\vert
\lambda\right\vert $ and $\theta=\arg\lambda,$ we find an amazingly simple
expression for the quantity
\[
\frac{\partial s_{t}}{\partial\rho}=\lim_{\varepsilon\rightarrow0^{+}}%
\frac{\partial S}{\partial\rho}(t,\lambda,\varepsilon),
\]
inside $\Sigma_{t},$ namely,%
\begin{equation}
\frac{\partial s_{t}}{\partial\rho}=\frac{2\rho}{t}+1,\quad\lambda\in
\Sigma_{t}.\label{dsdRho}%
\end{equation}
(See Corollary 7.6 in \cite{DHKBrown}.) This result is obtained using a
certain constant of motion of the system of ODE's, namely the quantity%
\[
\Psi=\varepsilon p_{\varepsilon}+\frac{1}{2}(ap_{a}+bp_{b})
\]
in \cite[Proposition 6.5]{DHKBrown}.

If we evaluate this constant of motion at a time $t$ when $\varepsilon(t)=0,$
the $\varepsilon p_{\varepsilon}$ term vanishes. But if $\varepsilon(t)=0,$
the second Hamilton--Jacobi formula (\ref{derivFormulaGen}) tells us that%
\[
\left(  a\frac{\partial S}{\partial a}+b\frac{\partial S}{\partial b}\right)
(t,\lambda(t),0)=a(t)p_{a}(t)+b(t)p_{b}(t).
\]
Furthermore, $a\frac{\partial S}{\partial a}+b\frac{\partial S}{\partial b}$
is just $\partial S/\partial\rho,$ computed in rectangular coordinates. A bit
of algebraic manipulation yields an explicit formula for $a\frac{\partial
S}{\partial a}+b\frac{\partial S}{\partial b},$ as in \cite[Theorem
6.7]{DHKBrown}, explaining the formula (\ref{dsdRho}). To complete the proof
(\ref{dsdRho}), it still remains to address certain regularity issues of
$S(t,\lambda,\varepsilon)$ near $\varepsilon>0,$ as in Section 7.3 of
\cite{DHKBrown}.

Once (\ref{dsdRho}) is established, we note that the formula for $\partial
s_{t}/\partial\rho$ in (\ref{dsdRho}) is independent of $\theta.$ It follows
that%
\[
\frac{\partial}{\partial\rho}\frac{\partial s_{t}}{\partial\theta}%
=\frac{\partial}{\partial\theta}\frac{\partial s_{t}}{\partial\rho}=0,
\]
that is, that $\partial s_{t}/\partial\theta$ is independent of $\rho$ inside
$\Sigma_{t}.$ Writing the Laplacian in logarithmic polar coordinates, we then
find that%
\begin{align}
\Delta s_{t}(\lambda)  &  =\frac{1}{r^{2}}\left(  \frac{\partial^{2}s_{t}%
}{\partial\rho^{2}}+\frac{\partial^{2}s_{t}}{\partial\theta^{2}}\right)
\nonumber\\
&  =\frac{1}{r^{2}}\left(  \frac{2}{t}+\frac{\partial}{\partial\theta}\left(
\frac{\partial s_{t}}{\partial\theta}\right)  \right)  ,\quad\lambda\in
\Sigma_{t}, \label{secondDeriv}%
\end{align}
where $2/t$ term in the expression comes from differentiating (\ref{dsdRho})
with respect to $\rho.$ Since $\partial s_{t}/\partial\theta$ is independent
of $\rho,$ we can understand the structure of the formula in Theorem
\ref{dhkMain.thm}.

The last step in the proof of Theorem \ref{dhkMain.thm} is to compute
$\partial s_{t}/\partial\theta.$ Since $\partial s_{t}/\partial\theta$ is
independent of $\rho$---or, equivalently, independent of $r=\left\vert
\lambda\right\vert $---inside $\Sigma_{t},$ the value of $\partial
s_{t}/\partial\theta$ at a point $\lambda$ in $\Sigma_{t}$ is the same as its
value as we approach the boundary of $\Sigma_{t}$ along the radial segment
through $\lambda.$ We show that $\partial s_{t}/\partial\theta$ is continuous
over the whole complex plane, even at the boundary of $\Sigma_{t}.$ (See
Section 7.4 of \cite{DHKBrown}.) Thus, on the boundary of $\Sigma_{t},$ the
function $\partial s_{t}/\partial\theta$ will agree with the angular
derivative of $\log(\left\vert \lambda-1\right\vert ^{2})$, namely%
\begin{align}
\frac{\partial}{\partial\theta}\log(\left\vert \lambda-1\right\vert ^{2})  &
=\frac{2\operatorname{Im}\lambda}{\left\vert \lambda-1\right\vert ^{2}%
}\nonumber\\
&  =\frac{2r\sin\theta}{r^{2}+1-2r\cos\theta}. \label{dSdthetaOut}%
\end{align}
Thus, to compute $\partial s_{t}/\partial\theta$ at a point $\lambda$ in
$\Sigma_{t},$ we simply evaluate (\ref{dSdthetaOut}) at either of the two
points where the radial segment through $\lambda$ intersects $\partial
\Sigma_{t}.$ (We get the same value at either point.)

One such boundary point is the point with argument $\theta=\arg\lambda$ and
radius $r_{t}(\theta),$ as in Figure \ref{rt.fig}. Thus, inside $\Sigma_{t},$
we have%
\[
\frac{\partial s_{t}}{\partial\theta}=\frac{2r_{t}(\theta)\sin\theta}%
{r_{t}(\theta)^{2}+1-2r_{t}(\theta)\cos\theta}.
\]
Plugging this expression into (\ref{secondDeriv}) gives the claimed formula in
Theorem \ref{dhkMain.thm}.

\end{document}